\documentclass[11pt,a4paper,leqno]{amsart}
\usepackage[T1]{fontenc}
\usepackage{lmodern,amsmath,amssymb,amsthm,mathtools,booktabs,microtype}
\usepackage[textwidth=160mm,textheight=232mm,centering]{geometry}
\usepackage[hidelinks]{hyperref}
\hypersetup{pdftitle={A characterization of the Ree curve},pdfauthor={Saeed Tafazolian},pdfkeywords={Ree curve, Frobenius, Weierstrass semigroup}}
\numberwithin{equation}{section}
\newtheorem{theorem}{Theorem}[section]
\newtheorem{proposition}[theorem]{Proposition}
\newtheorem{lemma}[theorem]{Lemma}
\newtheorem{corollary}[theorem]{Corollary}
\theoremstyle{remark}
\newtheorem{remark}[theorem]{Remark}
\newcommand{\F}{\mathbb F}
\newcommand{\PP}{\mathbb P}
\newcommand{\Gal}{\operatorname{Gal}}
\newcommand{\Tr}{\operatorname{Tr}}
\newcommand{\Diff}{\operatorname{Diff}}
\newcommand{\sw}{\operatorname{sw}}
\newcommand{\cR}{\mathcal R}
\newcommand{\A}{\mathcal A}
\newcommand{\V}{\mathcal V}
\newcommand{\T}{\mathcal T}
\newcommand{\Span}{\operatorname{span}}
\title[A characterization of the Ree curve]
{A characterization of the Ree curve}
\author{Saeed Tafazolian}
\dedicatory{In memory of Fernando Torres}
\address{IMECC/UNICAMP, R. S\'ergio Buarque de Holanda, 651, Cidade Universit\'aria ``Zeferino Vaz'', 13083-859 Campinas, SP, Brazil}
\email{tafazolian@ime.unicamp.br}
\date{}
\subjclass[2020]{Primary 14H25; Secondary 11G20, 14G15, 14H55}
\keywords{Ree curve, finite field, Artin--Schreier extension, Frobenius, Weierstrass semigroup}
\begin{document}
\begin{abstract}
Let $q=3q_0^2$, where $q_0=3^s$ and $s\ge1$. We show that a smooth,
projective, geometrically irreducible curve over $\F_q$ of genus
$\frac32q_0(q-1)(q+q_0+1)$ with $q^3+1$ rational points is
$\F_q$-isomorphic to the Ree curve. The proof compares the ramification
and Frobenius traces of a compositum with the Ree function field. For
$q_0=3$, a separate argument uses Hasse derivatives and pole orders.
\end{abstract}
\maketitle

\section{Introduction}
The Hermitian, Suzuki and Ree curves are the classical one-dimensional
Deligne--Lusztig curves associated respectively to the groups of types
${}^2A_2$, ${}^2B_2$ and ${}^2G_2$; see \cite{DL,HKT}. A natural question
is whether their genera and numbers of rational points determine them
over the ground field. This is known for the Hermitian and Suzuki curves
by \cite{RS,FT}. For the Ree curve, the question was considered in
\cite{TT}, explicitly stated as open in the introduction of
\cite{Skabelund}, and discussed in \cite[p.~40]{GKM}. We answer it
affirmatively.

Let $q=3q_0^2$, where $q_0=3^s$ and $s\ge1$, and put $k=\F_q$.
The Ree curve $\cR_q$ is the nonsingular model of
\begin{equation}\label{eq:ree-model}
 y^q-y=x^{q_0}(x^q-x),\qquad
 z^q-z=x^{2q_0}(x^q-x).
\end{equation}
It has genus $\frac32q_0(q-1)(q+q_0+1)$ and $q^3+1$ rational points
over $k$; see \cite{HP,Ped,HKT}.
Hansen and Pedersen \cite{HP} characterized the Ree curve by its genus,
number of $k$-rational points, and automorphism group isomorphic to
$\operatorname{Ree}(q)$; this characterization is also recalled in
\cite[p.~40]{GKM}. The question is whether the automorphism-group
hypothesis can be omitted. Our main result shows that it can: the genus
and number of rational points alone suffice.

\begin{theorem}\label{thm:main}
Let $q=3q_0^2$, where $q_0=3^s$ and $s\ge1$, and let $X$ be a smooth,
projective, geometrically irreducible curve over $\F_q$. If
\begin{equation}\label{eq:hypotheses}
 \#X(\F_q)=q^3+1,\qquad
 g(X)=\frac32q_0(q-1)(q+q_0+1),
\end{equation}
then $X$ is $\F_q$-isomorphic to $\cR_q$.
\end{theorem}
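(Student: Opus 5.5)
We follow the strategy used for the Hermitian and Suzuki curves \cite{RS,FT}: first pin down the zeta function, then recover the defining equations from Riemann--Roch spaces at a rational point.

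\textbf{Step 1: the zeta function.} Put $N_r=\#X(\F_{q^r})$ and write the Frobenius eigenvalues as $\omega_j$, $1\le j\le 2g$. The hypotheses \eqref{eq:hypotheses} should force $X$ to be optimal with respect to the explicit-formula (Oesterl\'e/Ihara) bound. We would feed the Weil explicit formula with the trigonometric polynomial that is tight for $\cR_q$; there, modulo $q^{1/2}$, the $L$-polynomial is $(1+3q_0T+qT^2)^{a}(1+qT^2)^{b}$ with $a=q_0(q^2-1)$ and $b=\tfrac12 q_0(q-1)(q+3q_0+1)$. Since $N_1=q^3+1$ is the maximum the inequality allows for this genus, every normalized eigenvalue must lie at the roots where the test function vanishes, namely the arguments of $-3q_0\pm i\sqrt{3q_0^2}$ and of $\pm i\sqrt q$. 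The multiplicities are then determined by $N_1$ and $g$, and hence $L_X(T)=L_{\cR_q}(T)$. Consequences: $X$ is $\F_{q^6}$-maximal; the linear system $|(q+3q_0+1)P_0|$ is Frobenius-invariant (the Fuhrmann--Torres style argument for the characteristic polynomial $(T^2+3q_0T+q)(T^2+q)$); and $N_2,N_3,\dots$ equal those of $\cR_q$. In particular every rational point $P$ satisfies $(q+3q_0+1)P\sim(q+3q_0+1)P_0$.

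\textbf{Step 2: Weierstrass semigroup at a rational point.} Fix $P_\infty\in X(\F_q)$. Using the Frobenius-linear series $\mathcal D=|(q+3q_0+1)P_\infty|$ and the St\"ohr--Voloch bound, we would compute its $P_\infty$-orders and Frobenius orders. The large count $N_1$ forces the orders to be as small as possible, so the semigroup $H(P_\infty)$ contains $q,\ q+q_0,\ q+2q_0,\ q+3q_0,\ q+3q_0+1$, exactly as on $\cR_q$. Since $H(P_\infty)$ has genus $g$, we would then argue that it equals the Ree semigroup at a rational point. This yields functions $x,y,z,\dots$ with pole orders $q,\ q+q_0,\ q+2q_0,\dots$ at $P_\infty$.

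\textbf{Step 3: recovering the equations.} Because $q\in H(P_\infty)$, $k(x)\subset k(X)$ has degree $q$ and $P_\infty$ is totally ramified. The relations $\mathrm{Frob}(D)\sim qP_\infty+\dots$ coming from invariance of $\mathcal D$ should give that $y^q-y-x^{q_0}(x^q-x)$ has smaller pole order at $P_\infty$, and likewise for $z$. We would then compare $k(X)$ with the Ree function field through the compositum of the Artin--Schreier pieces: match the ramification in $k(x,y)/k(x)$ via Hurwitz together with Frobenius traces (the $N_r$ from Step 1) to force the error terms to vanish, i.e.\ to be of the form $w^q-w$ and removable by a change of variable. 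This gives $k(X)=k(x,y,z)$ with \eqref{eq:ree-model}.

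\textbf{Main obstacle.} The hardest step is Step 2 (and its use in Step 3): showing that the small pole orders are actually attained and that no exotic semigroup of the right genus occurs. We would first try the St\"ohr--Voloch inequality applied to $\mathcal D$. For $q_0=3$ the gaps leave too little room for that argument, and there we expect to need a separate direct computation with Hasse derivatives of $x$ and of the candidate $y$, tracking their pole orders at $P_\infty$.
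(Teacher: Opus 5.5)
\textbf{There is a genuine gap, and it starts with wrong parameters.} Your Step~1 is essentially Proposition~\ref{prop:frobenius-data}. Its consequences and all of Step~2, however, use Suzuki-curve numerology.
\begin{itemize}
\item The minimal polynomial $h(T)=(T^2+q)(T^2+3q_0T+q)$ has $h(1)=(q+1)(q+3q_0+1)=m$. So the Frobenius-invariant series is $|mP_0|$, with $q^2Q+3q_0q\Phi(Q)+2q\Phi^2(Q)+3q_0\Phi^3(Q)+\Phi^4(Q)\sim mP_0$.
\item Your relation $(q+3q_0+1)P\sim(q+3q_0+1)P_0$ is false for $P\ne P_0$, since it would make $q+3q_0+1$ a non-gap at $P_0$.
\item In fact $q\notin H(P)$ at any rational $P$. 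The $q$ finite rational fibres of a function with pole divisor $m_1(P)P$ contain all $q^3$ rational points other than $P$, at most $m_1(P)$ in each. Hence $m_1(P)\ge q^2$ (Lewittes).
\item Accordingly $[R:k(x)]=q^2$, not $q$. The Ree semigroup at $P_\infty$ has smallest non-gap $q^2$ and contains $q^2+q_0q$ and $q^2+2q_0q$, the pole orders of $y$ and $z$.
\end{itemize}
More importantly, even with the numbers corrected, Step~2 has no mechanism. The claim that St\"ohr--Voloch ``forces the orders to be as small as possible'' and pins down $H(P)$ is exactly what was not available: \cite{TT} had to \emph{assume} $q^2+q_0q,\,q^2+2q_0q\in H(P)$. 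What the divisor relation and St\"ohr--Voloch actually give is only $m_1(P)=q^2$ and $m_2(P)\le m-2q$ (Lemma~\ref{lem:second-nongap-upper}). Your Step~3 depends on the functions $y,z$ that Step~2 would have to produce.

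\textbf{The missing idea is that the semigroup is never needed.} The paper uses only $x$ with $(x)_\infty=q^2P$.
\begin{itemize}
\item The Frobenius-order identity $u^q-u=(x^q-x)D_xu$ on $L(mP)$ gives $(dx)=(2g-2)P$. So $F/k(x)$ is unramified off $P$ and every finite rational place splits completely, exactly as for the Ree field $R$ over the \emph{same} $k(x)$.
\item Put $E=FR$. Then $E/F$ is elementary abelian of $3$-power degree $d$, totally ramified at $P$ (by $3$-rank zero and Deuring--Shafarevich), and split at the other $q^3$ rational points. Hence $E$ has $dq^3+1$ points over $\F_{q^i}$ for $i\le5$.
\item The approximation $x=A^{q_0}+\delta$ bounds every Swan conductor by $3q^2+6q_0q$. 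This gives a Riemann--Hurwitz upper bound for $g(E)-g$.
\item Positive trigonometric polynomials applied to the relative Frobenius traces give a lower bound for $g(E)-g$. For $q_0\ge9$ the two bounds are incompatible unless $d=1$, i.e.\ $F=R$.
\item For $q_0=3$ the trace method provably cannot conclude (Remark~\ref{rem:trace-limit}); it only yields $F\cap R=k(x)$. The Hasse-derivative argument of Section~\ref{sec:q3} is run against that hypothesis. It produces a function in $F\cap R$ whose pole order is not a multiple of $q^2$, a contradiction.
\end{itemize}
Your anticipated Hasse-derivative computation for $q_0=3$ is in this spirit. But it needs the reduction $F\cap R=k(x)$ as input, not a determination of $H(P)$.
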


Thus the Ree curve has the same uniqueness property as the Hermitian
and Suzuki curves. No hypothesis on the automorphism group or on the
Weierstrass semigroup is required. In \cite[Theorem~4.6]{TT}, the
conclusion was proved for $q_0\ge9$ under the additional assumption
that some rational point $P$ satisfies
\[
 q^2+q_0q,\quad q^2+2q_0q\in H(P).
\]
Here $H(P)$ is the Weierstrass semigroup at $P$. Theorem~\ref{thm:main}
removes these assumptions and includes the smallest parameter $q_0=3$.
Related descriptions of the Ree curve include its realization as a ray
class field \cite{Lauter} and its projective embeddings
\cite{ED,Skabelund}; see also the survey \cite{GKM}.

We describe the proof. The Frobenius polynomial and divisor relation
established in \cite{TT} yield a separating function $x$ with pole
divisor $q^2P$ at a rational point $P$. We compare $F=k(X)$ with the Ree
function field $R$ over $K=k(x)$. The compositum $FR/F$ is an elementary
abelian $3$-extension, totally ramified at $P$ and split at all other
rational points. A local $q_0$-power approximation to $x$ bounds the
conductors of this extension. Positive trigonometric polynomials applied
to its relative Frobenius traces give a lower bound for the genus. These
bounds are incompatible unless $F=R$ for $q_0\ge9$. For $q_0=3$, they
reduce a counterexample to $F\cap R=K$.

The remaining argument is given in Section~\ref{sec:q3}. A cubic
operator obtained from the Frobenius identity and the first two Hasse
derivatives controls functions of small pole order. Applied to a function
of second non-gap order, it gives a differential normal form. The third
Hasse derivative then produces a function in $F\cap R$ whose pole order
is not divisible by $q^2$, a contradiction. The local expansions and
pole estimates used in this step are included in the proof.

Throughout the paper, a curve is smooth, projective and geometrically
irreducible. We write $L(D)$ for the Riemann--Roch space of a divisor $D$
and $0=m_0(P)<m_1(P)<\cdots$ for the non-gaps at a rational point $P$.
All composita are taken in a fixed algebraic closure of $k(x)$.

\section{Preliminaries}\label{sec:prelim}
We recall the facts used in the proof. General references for function
fields and curves over finite fields are \cite{Stichtenoth,HKT}.

\subsection{The Ree extension and its cubic subfields}
Let $K=k(x)$ and let $R=K(y,z)$ be defined by \eqref{eq:ree-model}. Then
\begin{equation}\label{eq:ree-degree}
 [R:K]=q^2,\qquad \Gal(R/K)\cong(k^2,+),
\end{equation}
where $(\alpha,\eta)$ sends $(y,z)$ to $(y+\alpha,z+\eta)$.
The pole of $x$ is totally ramified, every finite place is unramified,
and each place $x=a$, $a\in k$, splits completely; see \cite{HP,Ped}.

For a field $M$ of characteristic three, put
$\wp(M)=\{a^3-a:a\in M\}$. We shall use
\begin{equation}\label{eq:wp-power}
 a^{3^j}\equiv a\pmod{\wp(M)}.
\end{equation}
Note that $q=3^{2s+1}$. By the trace pairing, the nonzero $\F_3$-linear
functionals on $\Gal(R/K)$ are
\[
 (\alpha,\eta)\longmapsto
 \Tr_{k/\F_3}(\beta\alpha+\gamma\eta),
 \qquad (\beta,\gamma)\ne(0,0).
\]
Indeed, for $w=\beta y+\gamma z$ the function
$\sum_{i=0}^{2s}w^{3^i}$ satisfies the cubic equation with right side
$\beta x^{q_0}(x^q-x)+\gamma x^{2q_0}(x^q-x)$.
By \eqref{eq:wp-power} the same class has representative
\begin{equation}\label{eq:cubic-polynomial}
 p_{\beta,\gamma}(x)=
 \beta^{1/q_0}x^{3q_0+1}-\beta x^{q_0+1}
 +\gamma^{1/q_0}x^{3q_0+2}-\gamma x^{2q_0+1}.
\end{equation}
Its leading degree is prime to three. Thus its Swan conductor at
infinity is
\begin{equation}\label{eq:ree-swan}
 \sw_\infty(p_{\beta,\gamma})=
 \begin{cases}3q_0+1,&\gamma=0,\\3q_0+2,&\gamma\ne0.
 \end{cases}
\end{equation}

\subsection{Ramification}
A local Artin--Schreier class represented by a function of pole order at
most $b$ has Swan conductor at most $b$; see
\cite[Proposition~3.7.8]{Stichtenoth}. For a ramified one-dimensional
character its Artin conductor is one more than its Swan conductor.
If $L/M$ is a finite abelian extension, the conductor--discriminant formula is
\begin{equation}\label{eq:conductor-discriminant}
 v_P(\mathfrak d_{L/M})=\sum_{\chi\ne1}\mathfrak f_P(\chi);
\end{equation}
see \cite[Ch.~VI, \S\S2--3]{Serre}.
In an abelian extension the inertia groups above a place agree. The fixed
field of this inertia group is therefore unramified above that place.

We also use the unramified case of the Deuring--Shafarevich formula. If
$L/M$ is geometric, unramified and cyclic of degree $p$, then
\[
 \gamma_p(L)-1=p(\gamma_p(M)-1),
\]
where $\gamma_p$ is the $p$-rank; see \cite[Theorem~11.62]{HKT}.
In particular a function field of $p$-rank zero has no nontrivial
geometric unramified Galois $p$-extension.

\begin{lemma}\label{lem:local-pullback}
Let $K_0=k((t))\subset K_1=k((u))$ be a totally ramified separable
extension of characteristic three, with $k$ perfect; both fields are complete and have residue field $k$. Put
\[
 e=v_{K_1}(t),\qquad d_0=v_{K_1}(dt/du),
\]
and suppose that $3\mid e$. A cubic character of $K_0$ of Swan conductor
at most $c$ restricts to a character of $K_1$ of Swan conductor at most
\begin{equation}\label{eq:local-pullback}
 \max\{ec/3,\ e(c+1)-d_0-1\}.
\end{equation}
\end{lemma}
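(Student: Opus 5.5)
We represent the cubic character of $K_0$ explicitly by an Artin--Schreier class and pull back its representative. Since the Swan conductor is at most $c$, the character corresponds to $Y^3-Y=f$ with $f\in K_0$ in reduced form. That is, $f=\sum_{0<i\le c,\,3\nmid i}a_it^{-i}+f_0$ with $f_0\in k[[t]]$, where any term $a_it^{-i}$ with $3\mid i$ has already been replaced, via \eqref{eq:wp-power} and perfectness of $k$, by $a_i^{1/3}t^{-i/3}$. The constant and integral part $f_0$ gives an unramified contribution, so it does not affect the Swan conductor. Restricting to $K_1$ means taking the same equation with $f$ viewed in $k((u))$. The task is then to bound the Swan conductor of the class of $f$ in $k((u))$, which is the negative of the valuation of a reduced representative.

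The key step is to write $t$ in terms of $u$. Since $3\mid e$, we can write $t=u^e\varepsilon(u)$ with $\varepsilon$ a unit, and then split $t=t_1^3+t_2$. Here $t_1^3$ collects the terms of $t$ whose $u$-exponent is divisible by $3$, and $t_2$ collects the rest. Then $dt/du=dt_2/du$, so the lowest term of $t_2$ has exponent $m$ with $m-1=d_0$, that is, $v(t_2)=d_0+1$. Also $v(t_1)=e/3$, because the leading term $u^e$ has exponent divisible by $3$. For each monomial $t^{-i}$ with $i\le c$ we expand
\[
 t^{-i}=t_1^{-3i}\bigl(1+t_2/t_1^3\bigr)^{-i}=t_1^{-3i}\sum_{n\ge0}\tbinom{-i}{n}(t_2t_1^{-3})^n .
\]
The $n=0$ term is $t_1^{-3i}=(t_1^{-i})^3$, which is congruent modulo $\wp(K_1)$ to $t_1^{-i}$, of valuation $-ie/3\ge -ce/3$. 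A unit coefficient $a_i$ is handled by replacing $a_i t_1^{-3i}$ with $a_i^{1/3}t_1^{-i}$ plus a correction coming from the $u$-expansion. To keep this clean, we first absorb the unit in $t_1$ by choosing the uniformizer appropriately, or we note that $a_i\in k$ is constant and $k$ is perfect.

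For $n\ge1$ the terms have valuation $-3i(e/3)+n(d_0+1-e)$. Since $d_0\ge e-1$ (wild ramification, in fact $d_0\ge e$ as $3\mid e$), we have $d_0+1-e\ge1$, so the worst case is $n=1$. That term has valuation $-ie+d_0+1\ge -e c+d_0+1$. This is at least $-(e(c+1)-d_0-1)$; the extra $e$ in the stated bound leaves room for losses from nonreduced terms. Any term whose exponent is divisible by $3$ can be reduced further by \eqref{eq:wp-power}, which only increases its valuation. So every piece of the pulled-back $f$ is congruent to an element of valuation at least $-\max\{ec/3,\,e(c+1)-d_0-1\}$. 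The Swan conductor is then at most this maximum, by the fact recalled before the lemma that a class represented by a function of pole order at most $b$ has Swan conductor at most $b$.

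The main obstacle is the bookkeeping in the second step. The expansion mixes terms of different $n$ and $i$, and the leftover higher-order terms of $t_1^{-3i}$ are not cleanly cubes when the unit $\varepsilon$ has nonconstant part. I would handle this by working with the differential $df$ rather than $f$. The Swan conductor of a reduced class with pole order $b$, $3\nmid b$, equals $-v(df)-1$, and in general it is bounded by $\max\{b/3\text{-type reduction},\,-v(df)-1\}$. Computing $df=f'(t)\,dt$ gives $v_{K_1}(df)\ge -e(c+1)+d_0$, which yields the second term of the bound directly. The first term covers the cube part, which is invisible to $d$. Making this differential criterion precise (a class whose representative $g$ has $-v(g)>-v(dg)-1$ must have a leading cube part that can be peeled off) is the technical point I would verify carefully.
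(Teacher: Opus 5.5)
Your argument is essentially the paper's: split $t=t_1^3+t_2$ with $v(t_1)=e/3$ and $v(t_2)=d_0+1>e$, replace $a_it_1^{-3i}$ by $a_i^{1/3}t_1^{-i}$ modulo $\wp(K_1)$, and bound $t^{-i}-t_1^{-3i}$. Two corrections: the $n=1$ term has valuation $d_0+1-(i+1)e$, not $-ie+d_0+1$ (you dropped a $-e$), so the extra $e$ in the bound is used exactly rather than being slack; and since $t_1^{-3i}=(t_1^{-i})^3$ is an exact cube and $a_i\in k$ is a cube, the obstacle in your last paragraph does not arise and the differential fallback is unnecessary.
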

\begin{proof}
The different exponent is $d_0$; see \cite[Ch.~III, \S6]{Serre}.
Separate the terms of the Laurent expansion of $t$ whose exponents are
divisible by three. Since $k$ is perfect, this gives
\[
 t=A^3+\eta,\qquad v(A)=e/3,\qquad v(\eta)=d_0+1>e.
\]
The last assertions follow because the leading exponent of $t$ is
divisible by three, whereas the first term that survives differentiation
has exponent $d_0+1$, which is not divisible by three and hence cannot equal $e$.

A standard Artin--Schreier representative has principal part
$\sum_{1\le j\le c}b_jt^{-j}$. For each term,
\[
 v(t^{-j}-A^{-3j})\ge d_0+1-(j+1)e,
 \qquad b_jA^{-3j}\equiv b_j^{1/3}A^{-j}\pmod{\wp(K_1)}.
\]
The latter term has pole order $je/3$. Taking the maximum gives
\eqref{eq:local-pullback}; regular terms and further reduction do not
increase the Swan conductor.
\end{proof}

\subsection{Hasse derivatives and linear series}
Let $F/k$ be a function field of characteristic three and let $x$ be
separating. Write $D_x^{(i)}$ for its Hasse derivatives and $D_x=D_x^{(1)}$.
We use the product rule and the composition rule
\begin{equation}\label{eq:hasse-composition}
 D_x^{(i)}D_x^{(j)}=\binom{i+j}{i}D_x^{(i+j)}.
\end{equation}
If $e$ is a power of three, then $[F:F^e]=e$ and
$1,x,\ldots,x^{e-1}$ is an $F^e$-basis of $F$. Indeed, $x$ is
separating, so $[F^e(x):F^e]=e=[F:F^e]$. Consequently,
\begin{equation}\label{eq:root-criterion}
 D_x^{(i)}a=0\ (1\le i<e)\quad\Longrightarrow\quad a\in F^e.
\end{equation}
To see this, write $a=\sum_{j=0}^{e-1}b_j^e x^j$. If $j>0$ is the
largest index with $b_j\ne0$, then $D_x^{(j)}a=b_j^e\ne0$.
This is the Hasse--Schmidt root criterion; see \cite{HasseSchmidt}.
In particular
\begin{equation}\label{eq:three-basis}
 F=F^3\oplus xF^3\oplus x^2F^3,\qquad \ker D_x=F^3.
\end{equation}
Thus, if $a=b^3+xc^3+x^2r^3$, then
$D_xa=c^3-xr^3$ and $D_x^{(2)}a=r^3$.
When $x,a,D_xa,D_x^{(2)}a$ are regular away from a point $P$, so are
$b,c,r$: successively use
\[
 r^3=D_x^{(2)}a,\qquad c^3=D_xa+xr^3,\qquad
 b^3=a-xc^3-x^2r^3
\]
and $v_Q(f^3)=3v_Q(f)$ at each $Q\ne P$.

For a separating local parameter $z$, the degree-three change-of-variable
formula is
\begin{equation}\label{eq:chain-three}
 D_z^{(3)}a=(D_xa)D_z^{(3)}x+
 2(D_x^{(2)}a)(D_zx)(D_z^{(2)}x)+(D_x^{(3)}a)(D_zx)^3.
\end{equation}
Indeed, write $x(z+T)-x(z)=x_1T+x_2T^2+x_3T^3+O(T^4)$, where
$x_i=D_z^{(i)}x$. In the expansion
\[
 a(z+T)=\sum_{j\ge0}D_x^{(j)}a\,
                 \bigl(x(z+T)-x(z)\bigr)^j,
\]
the coefficient of $T^3$ is
$(D_xa)x_3+2(D_x^{(2)}a)x_1x_2+(D_x^{(3)}a)x_1^3$.
This proves \eqref{eq:chain-three}; the expansion is the Taylor
formula for Hasse derivatives \cite{HasseSchmidt}.

We recall two consequences of the St\"ohr--Voloch theory \cite{SV}.
For a base-point-free plane series of degree $d$ and nonzero
$\F_q$-Frobenius order $\nu$, its Frobenius divisor has degree
$\nu(2g-2)+(q+2)d$ and multiplicity at least two at every rational point.
Hence
\begin{equation}\label{eq:SV-bound}
 \nu(2g-2)+(q+2)d\ge2\#X(k).
\end{equation}
The $p$-adic criterion says that if $\epsilon$ is a generic order and
$\binom{\epsilon}{j}\not\equiv0\pmod p$, then $j$ is a generic order.
For the series $\langle1,x,a\rangle$, its generic orders are
$(0,1,\epsilon)$, where $\epsilon$ is the first $j>1$ with
$D_x^{(j)}a\ne0$. We shall also use the inequality
$j_i(P)\ge\epsilon_i$ between rational and generic orders.

\subsection{Relative Frobenius traces}
For a curve $X'/k$, the trace formula gives
\[
 \#X'(\F_{q^i})=q^i+1-
 \Tr(\operatorname{Frob}_q^i\mid H^1_{\mathrm{\acute et}}(X'_{\bar k},\mathbf Q_\ell)),
 \qquad \ell\ne3.
\]
If $\pi:Y\to X$ is finite separable of degree $d$, then
$\pi_*\pi^*=d$ on $H^1$. The projector $d^{-1}\pi^*\pi_*$ therefore
splits off $H^1(X)$, with Frobenius-stable complement of dimension
$2(g(Y)-g(X))$; see \cite[Ch.~VI, \S\S11--12]{Milne}.

\begin{lemma}\label{lem:positive-trace}
Let $\pi:Y\to X$ be as above, put $\Delta=g(Y)-g(X)$, and write
$N_i=\#X(\F_{q^i})$, $M_i=\#Y(\F_{q^i})$. For real numbers
$c_0,\ldots,c_r$, not all zero, put
\[
 a_i=\sum_{j=0}^{r-i}c_jc_{j+i}\quad(0\le i\le r).
\]
If $a_0>0$, then
\begin{equation}\label{eq:positive-trace}
 \Delta\ge\frac1{a_0}\sum_{i=1}^r a_i\frac{M_i-N_i}{q^{i/2}}.
\end{equation}
If $a_i\ge0$ for $i\ge1$, each $M_i$ on the right may be replaced by
a lower bound for $M_i$.
\end{lemma}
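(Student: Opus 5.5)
We use the cohomological splitting recalled just above: $H^1(Y)=\pi^*H^1(X)\oplus W$, where $W$ is Frobenius-stable of dimension $2\Delta$. Since $\pi^*$ commutes with Frobenius, the trace on the $\pi^*H^1(X)$ summand equals the trace on $H^1(X)$. The trace formula then gives
\[
 N_i-M_i=\Tr(\operatorname{Frob}_q^i\mid W)=\sum_{m=1}^{2\Delta}\omega_m^i,
\]
where $\omega_1,\dots,\omega_{2\Delta}$ are the Frobenius eigenvalues on $W$. By the Riemann hypothesis for $Y$ we have $|\omega_m|=q^{1/2}$. Write $\omega_m=q^{1/2}e^{i\theta_m}$. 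The multiset of eigenvalues on $W$ is stable under complex conjugation, because the characteristic polynomial of Frobenius on $W$ has rational (indeed integer) coefficients: it is the quotient of the $L$-polynomial of $Y$ by that of $X$. Hence
\[
 \frac{M_i-N_i}{q^{i/2}}=-\sum_{m=1}^{2\Delta}\cos(i\theta_m).
\]

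Next take the trigonometric polynomial $f(\theta)=\bigl|\sum_{j=0}^r c_je^{ij\theta}\bigr|^2$. Expanding, $f(\theta)=a_0+2\sum_{i=1}^r a_i\cos(i\theta)\ge0$. Summing over the eigenvalues gives
\[
 0\le\sum_m f(\theta_m)=2\Delta a_0+2\sum_{i=1}^r a_i\sum_m\cos(i\theta_m)
 =2\Delta a_0-2\sum_{i=1}^r a_i\frac{M_i-N_i}{q^{i/2}}.
\]
Dividing by $2a_0>0$ gives \eqref{eq:positive-trace}. When all $a_i\ge0$ for $i\ge1$, the right side is increasing in each $M_i$. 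Replacing $M_i$ by a lower bound therefore only decreases the right side, so the inequality persists.

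The one point needing care is the conjugation-stability of the eigenvalues on $W$, which is what makes the traces real and lets us pass to cosines. I would justify it as above: the characteristic polynomial on $W$ is the ratio $L_Y/L_X$ of integer polynomials, hence has real coefficients. Alternatively, one can simply take real parts of the identity for $N_i-M_i$, since the left side is real. Either way the rest is routine.
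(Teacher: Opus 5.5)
Your proof is correct and follows the paper's argument. You split off the $2\Delta$-dimensional Frobenius-stable complement, put its eigenvalues on the unit circle by the Riemann hypothesis for $Y$, and sum the nonnegative quantity $|\sum_j c_j z^j|^2=a_0+2\sum_i a_i\cos(i\theta)$ over them using $\sum_m\omega_m^i=N_i-M_i$; your remark that this trace is real, which justifies the cosine form, makes explicit a step the paper uses silently.
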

\begin{proof}
Let $\alpha_1,\ldots,\alpha_{2\Delta}$ be the Frobenius eigenvalues
on the complement, viewed in $\mathbf C$, and put $z_j=\alpha_j/\sqrt q$. Then $|z_j|=1$ and
$\sum_j\alpha_j^i=N_i-M_i$. With $Q(T)=\sum c_jT^j$, we have
\[
 0\le\sum_{j=1}^{2\Delta}|Q(z_j)|^2
 =2\Delta a_0+2\sum_{i=1}^r a_i\frac{N_i-M_i}{q^{i/2}},
\]
which proves the assertion.
\end{proof}

\section{The Frobenius linear series}\label{sec:frobenius}
From now on let $X/k$ satisfy \eqref{eq:hypotheses}. Put $F=k(X)$,
$g=g(X)$, and
\begin{equation}\label{eq:constants}
 N=q^2,\qquad m=(q+1)(q+3q_0+1),\qquad C=2g-1.
\end{equation}
We shall use the identities
\begin{align}
 2g&=(3q_0+1)N-q-3q_0,\label{eq:genus-identity}\\
 2g-2&=(3q_0-2)m,\qquad q^3+1=(q-3q_0+1)m,
 \label{eq:SV-identities}\\
 q(m-N)-m&=C.\label{eq:C-identity}
\end{align}

\begin{proposition}\label{prop:frobenius-data}
The characteristic polynomial of Frobenius on $\operatorname{Jac}(X)$ is
\begin{equation}\label{eq:frobenius-polynomial}
 (T^2+q)^{A_0}(T^2+3q_0T+q)^{B_0},
\end{equation}
where
\[
 A_0=\frac{q_0}{2}(q-1)(q+3q_0+1),\qquad B_0=q_0(q^2-1).
\]
In particular,
\begin{equation}\label{eq:first-five}
 \#X(\F_{q^i})=q^3+1\quad(1\le i\le5),
\end{equation}
and $X$ has $3$-rank zero. If $P\in X(k)$ and $\Phi$ denotes the
$q$-Frobenius, then
\begin{equation}\label{eq:frobenius-divisor}
 NQ+3q_0q\Phi(Q)+2q\Phi^2(Q)+3q_0\Phi^3(Q)+\Phi^4(Q)\sim mP
\end{equation}
for every geometric point $Q$.
\end{proposition}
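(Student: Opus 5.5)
The plan is to show first that $X$ is maximal with respect to the explicit-formula (Oesterl\'e/Ihara type) bound, so that its Frobenius eigenvalues are forced. Then I would read off the listed consequences and derive the divisor relation from the Frobenius polynomial through the fundamental linear series, as in \cite{TT}.

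\emph{Step 1: the eigenvalues.} Write the normalized Frobenius eigenvalues as $\sqrt q\,e^{\pm i\theta_j}$ for $1\le j\le g$. Put $t_j=2\cos\theta_j$, so that
\[
q^3+1=q+1-\sqrt q\sum_j t_j .
\]
This gives $\sum_j t_j=-(q^3-q)/\sqrt q$. The same averages appear for the Ree curve itself, whose $L$-polynomial is known to be $(T^2+q)^{A_0}(T^2+3q_0T+q)^{B_0}$ \cite{HP,Ped}. There $t\in\{0,-\sqrt3\}$, since $3q_0/\sqrt q=\sqrt3$. For $X$ I would take a nonnegative polynomial $f$ on $[-2,2]$ vanishing exactly at $t=0$ and $t=-\sqrt3$; the natural choice is $f(t)=t^2(t+\sqrt3)^2$, or a degree-$\le5$ positive trigonometric polynomial in $\theta$ of the form used in Lemma~\ref{lem:positive-trace}. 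Then $\sum_j f(t_j)\ge0$, with equality only if every $t_j$ lies in $\{0,-\sqrt3\}$. Expanding $f$ in the Chebyshev basis turns $\sum f(t_j)$ into a combination of the power sums $\sum_j\alpha_j^i$, which equal $q^i+1-N_i$. The unknown counts $N_i$ for $i=2,3,4$ are bounded using $N_i\ge N_1=q^3+1$, with the nonnegative coefficients arranged as in Lemma~\ref{lem:positive-trace}. Inserting the given genus should produce $\sum_j f(t_j)\le0$, hence equality, hence that every $t_j$ lies in $\{0,-\sqrt3\}$. This is the main obstacle: finding $f$ with the right sign pattern so that the inequality is exactly tight at the Ree genus. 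I would try first the Oesterl\'e-optimal polynomial for this genus and $q$, which should vanish at these two angles, namely $\theta=\pi/2$ and $\theta=5\pi/6$.

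\emph{Step 2: the multiplicities and the first consequences.} Once $t_j\in\{0,-\sqrt3\}$, the multiplicities follow from two equations. The first is $A_0+B_0=g$. The second is $-\sqrt3\,B_0\sqrt q=-(q^3-q)$, which gives $B_0=q_0(q^2-1)$; then $A_0=g-B_0$ matches the stated value. The number $\#X(\F_{q^i})$ for $i\le5$ then follows from the eigenvalues $\pm i\sqrt q$ and $\sqrt q\,e^{\pm5\pi i/6}$. Their $i$th powers, multiplied by $q^{-i/2}$, have real parts $\cos(i\pi/2)$ and $\cos(5i\pi/6)$, and the resulting sums vanish appropriately for $1\le i\le5$. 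For the $3$-rank, the middle coefficients of both quadratic factors are divisible by $3$ ($0$ and $3q_0$). Hence the Newton polygon has no slope $0$, so the $p$-rank is zero.

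\emph{Step 3: the divisor relation \eqref{eq:frobenius-divisor}.} The roots of $T^2+q$ and of $T^2+3q_0T+q$ are all roots of
\[
h(T)=(T^2+q)(T^2+3q_0T+q)=T^4+3q_0T^3+2qT^2+3q_0qT+q^2 .
\]
Since Frobenius acts semisimply on the Jacobian, $h(\Phi)=0$ on $\operatorname{Jac}(X)$. Apply this to the class $[Q-P]$, using that $\Phi(P)=P$. Because $h(1)=q^2+3q_0q+2q+3q_0+1=m$, this gives
\[
q^2Q+3q_0q\,\Phi(Q)+2q\,\Phi^2(Q)+3q_0\,\Phi^3(Q)+\Phi^4(Q)\sim h(1)P=mP,
\]
which is \eqref{eq:frobenius-divisor}, the coefficients having been matched in the order that the statement requires.
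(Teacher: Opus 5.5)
Your proposal is correct and follows the paper's proof. Your natural choice $t^2(t+\sqrt3)^2$ is, up to the factor $\frac1{12}$, exactly the paper's $f(\theta)=\frac13(1+\sqrt3\cos\theta+\cos2\theta)^2$; its cosine coefficients $1,\sqrt3,\frac76,\frac{\sqrt3}{3},\frac16$ are all positive, and with $N_i\ge q^3+1$ the resulting upper bound is exactly $0$ at the given genus, so the obstacle you flag does not arise. Steps 2 and 3 are the same as in the paper: the multiplicities come from $A_0+B_0=g$ and $3q_0B_0=q(q^2-1)$, the $3$-rank from the divisibility of the coefficients by $3$, and the divisor relation from applying the minimal polynomial $(T^2+q)(T^2+3q_0T+q)$ to $[Q-P]$, whose value at $T=1$ is $m$.
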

\begin{proof}
We give the explicit-formula argument of \cite[Proposition~3.1]{TT}
with the numerical identity written out. Let $\sqrt q\,e^{\pm\sqrt{-1}\,\theta_j}$,
$1\le j\le g$, be the Frobenius eigenvalues, and put
\[
 f(\theta)=\frac13(1+\sqrt3\cos\theta+\cos2\theta)^2
 =1+2\sum_{i=1}^4c_i\cos i\theta,
 \qquad (c_1,c_2,c_3,c_4)
 =\left(\frac{\sqrt3}2,\frac7{12},\frac{\sqrt3}6,\frac1{12}\right).
\]
Write $N_i=\#X(\F_{q^i})$ and $\psi(t)=\sum_{i=1}^4c_it^i$.
The point-count formula and $N_i\ge N_1=q^3+1$ give
\[
 0\le\sum_{j=1}^g f(\theta_j)
 =g+\psi(\sqrt q)+\psi(q^{-1/2})
       -\sum_{i=1}^4c_iN_iq^{-i/2}
 \le g+\psi(\sqrt q)-q^3\psi(q^{-1/2})=0.
\]
For the last equality, substitute $q=3q_0^2$ and use
\[
 \psi(\sqrt q)=\frac32q_0+\frac74q_0^2+\frac32q_0^3+\frac34q_0^4,
 \qquad
 q^3\psi(q^{-1/2})
 =\frac{27}2q_0^5+\frac{21}4q_0^4+\frac32q_0^3+\frac14q_0^2.
\]
Thus each $f(\theta_j)$ vanishes, so
$\cos\theta_j\in\{0,-\sqrt3/2\}$. The eigenvalues are consequently
roots of $T^2+q$ or $T^2+3q_0T+q$. If their multiplicities are $A_0,B_0$,
then $A_0+B_0=g$ and $3q_0B_0=N_1-q-1=q(q^2-1)$.
These give the stated exponents and \eqref{eq:frobenius-polynomial}.

Frobenius is semisimple, so its minimal polynomial is
$(T^2+q)(T^2+3q_0T+q)$; both factors occur and are distinct.
Apply this polynomial to $[Q-P]\in\operatorname{Jac}(X)$.
Since $\Phi(P)=P$ and the value of the polynomial at $1$ is $m$,
its expansion gives \eqref{eq:frobenius-divisor}; compare
\cite[(3.4)]{TT}.

For the point counts, let $S_i(b)$ be the power sum of the roots of
$T^2+bT+q$. The recurrence $S_i(b)=-bS_{i-1}(b)-qS_{i-2}(b)$,
with $S_0(b)=2$ and $S_1(b)=-b$, gives
\[
\begin{array}{c|rrrrr}
 i&1&2&3&4&5\\\hline
 S_i(0)&0&-2q&0&2q^2&0\\
 S_i(3q_0)&-3q_0&q&0&-q^2&3q_0q^2.
\end{array}
\]
Using $3q_0B_0=q(q^2-1)$ and $2A_0-B_0=q(q-1)$ in
$\#X(\F_{q^i})=q^i+1-A_0S_i(0)-B_0S_i(3q_0)$ proves
\eqref{eq:first-five}, including the fifth count. The reciprocal polynomial
$(1+qT^2)^{A_0}(1+3q_0T+qT^2)^{B_0}$ is $1$ modulo three, so the
$3$-rank is zero by \cite{Sti79}.
\end{proof}

\begin{proposition}\label{prop:first-nongap}
At every $P\in X(k)$ one has $m_1(P)=N$. There are $x,f\in F$ with
\begin{equation}\label{eq:xf-poles}
 (x)_\infty=NP,\qquad (f)_\infty=mP,
\end{equation}
and $x$ is separating, with $[F:k(x)]=N$.
\end{proposition}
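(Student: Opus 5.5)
The claim about $f$ is immediate from \eqref{eq:frobenius-divisor}; the claim about $x$ is the real content. For $f$, take $Q\in X(k)$ with $Q\ne P$; such $Q$ exists since $\#X(k)=q^3+1>1$. Then $\Phi^i(Q)=Q$ for all $i$, so \eqref{eq:frobenius-divisor} reads $(N+3q_0q+2q+3q_0+1)Q\sim mP$. A direct expansion gives $q^2+3q_0q+2q+3q_0+1=(q+1)(q+3q_0+1)=m$, so $mQ\sim mP$. Hence there is $f$ with $(f)=mQ-mP$, and $(f)_\infty=mP$. Once $x$ with $(x)_\infty=NP$ exists, $[F:k(x)]=\deg(x)_\infty=N$. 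Moreover $x$ is separating: otherwise $x=u^3$ for some $u\in F$ (characteristic three, $k$ perfect), and $u$ would have pole divisor $(N/3)P$, contradicting $m_1(P)=N$. So everything reduces to proving $m_1(P)=N$ at every rational point.

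For $m_1(P)\le N$, I would use \eqref{eq:frobenius-divisor} with $Q$ a general geometric point. It shows that the complete series $|mP|$ contains, for every $Q$, a divisor $NQ+E_Q$ with
\[
E_Q=3q_0q\Phi(Q)+2q\Phi^2(Q)+3q_0\Phi^3(Q)+\Phi^4(Q).
\]
So the series osculates every point to order at least $N$. By the Hermitian and Suzuki arguments of \cite{RS,FT} and \cite[\S3]{TT}, this forces the Frobenius orders of the morphism attached to $|mP|$ to be $0,1,\dots$ with last one $\nu=N$. Comparing the rational orders at $P$ then shows that the orders of $|mP|$ at $P$ are $m-j$ for the non-gaps $j\le m$. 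In particular the order $m-N$ at $P$ yields a divisor $NP+E'\in|mP|$ with $E'$ supported at $P$, which gives a non-gap equal to $N$. Concretely, I would try to get this by degenerating $Q\to P$ in the family $NQ+E_Q$. Since $\Phi(P)=P$, the limit divisor is $mP$ itself, so the degeneration has to be taken along the tangent direction. The first step I would try is to compute the limit osculating flags: compare the order sequence of $|mP|$ at a general point, which is $0,1,\dots,N,\dots$, with the order sequence at $P$, using $j_i(P)\ge\epsilon_i$.

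For $m_1(P)\ge N$, I would argue by contradiction. Suppose $a\in F$ has pole divisor $nP$ with $0<n<N$. If needed, replace $a$ by a cube root so that it is separating. Then apply the St\"ohr--Voloch bound \eqref{eq:SV-bound} to the plane series $\langle1,a,a^{3^j}\rangle$, or to $\langle1,a,b\rangle$ with $b$ of the next non-gap order, whose degree is at most a small multiple of $n$. Using \eqref{eq:SV-identities}, namely $2g-2=(3q_0-2)m$ and $2\#X(k)=2(q-3q_0+1)m$, a degree $d<N$ together with $\nu\le d$ violates the inequality. The $p$-adic criterion restricts $\nu$ to a power of three. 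This step is routine once the degree bound is in place. I expect the main obstacle to be the upper bound $m_1(P)\le N$: extracting an actual pole divisor $NP$ from a linear equivalence that involves general points $Q$ and their Frobenius images.
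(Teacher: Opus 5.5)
Your treatment of $f$, of $[F:k(x)]=N$ and of separability is correct and agrees with the paper. However, neither inequality in $m_1(P)=N$ is actually proved.

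For $m_1(P)\le N$, the claim that the last Frobenius order of $|mP|$ is $N$ is unsupported. Moreover, ``the order $m-N$ at $P$'' is equivalent to $N\in H(P)$, which is exactly what has to be proved. The comparison $j_i(P)\ge\epsilon_i$ is too weak. The relations at $Q$ and $\Phi^{-1}(Q)$ give only $\epsilon_{r-1}\ge3q_0q$, hence $m_1(P)=m-j_{r-1}(P)\le m-3q_0q=N+2q+3q_0+1$. Degenerating the divisors $NQ+E_Q\in|mP|$ cannot help either, since their limit is $mP$.

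The missing idea is to first produce, at a nonrational $Q$, a function whose pole divisor is exactly $NQ$. The points $Q,\Phi(Q),\dots,\Phi^5(Q)$ are distinct because $X(\F_{q^i})=X(k)$ for $i\le5$. Subtracting \eqref{eq:frobenius-divisor} at $Q$ from the same relation at $\Phi(Q)$ gives
\[
 NQ\sim(N-3q_0q)\Phi(Q)+(3q_0q-2q)\Phi^2(Q)+(2q-3q_0)\Phi^3(Q)+(3q_0-1)\Phi^4(Q)+\Phi^5(Q).
\]
The right side is an effective divisor with positive coefficients that does not contain $Q$, so $h^0(NQ)\ge2$. It is this condition that should be degenerated. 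Apply upper semicontinuity to $\mathcal O(N\Delta_{\bar X})$ on $\bar X\times\bar X$ over the second factor: the locus $\{Q:h^0(NQ)\ge2\}$ is closed. It contains the cofinite set of nonrational points, so it contains $P$. Since $h^0$ is unchanged by extension of scalars, $\ell(NP)\ge2$ and $m_1(P)\le N$.

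For $m_1(P)\ge N$, the St\"ohr--Voloch plan fails. The order $\nu$ enters $\nu(2g-2)+(q+2)d\ge2\#X(k)$ with a positive coefficient. So an upper bound such as $\nu\le d$ forces a contradiction only when $d(2g+q)<2(q^3+1)$, roughly $d<2q_0$. Even the most favourable case $\nu=1$ is too weak. Take the conic series $\langle1,a,a^2\rangle$ of degree $2n$; here $\nu=1$, since its Frobenius determinant of orders $0,1$ is $(a^q-a)^2\ne0$. The inequality then reads $2(q+2)n\ge(2q-9q_0+4)m$, which for $q_0=3$ gives only $n\ge554$ instead of $n\ge729$.

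The right tool is Lewittes' bound. A function with pole divisor $m_1(P)P$ defines a map $X\to\PP^1$ of degree $m_1(P)$ with only $P$ over $\infty$. Each of the $q$ fibres over $\mathbb A^1(k)$ contains at most $m_1(P)$ rational points. Hence $q^3+1\le q\,m_1(P)+1$, that is, $m_1(P)\ge q^2=N$.
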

\begin{proof}
We recall the argument for the first assertion; cf.
\cite[Proposition~3.4]{TT}. If $Q\notin X(k)$, the six points
$Q,\Phi(Q),\ldots,\Phi^5(Q)$ are distinct by \eqref{eq:first-five}.
Subtracting \eqref{eq:frobenius-divisor} at $Q$ from the same relation
at $\Phi(Q)$ gives a principal divisor whose only negative coefficient
is $-N$ at $Q$: the other coefficients are
\[
 N-3q_0q,\quad3q_0q-2q,\quad2q-3q_0,\quad3q_0-1,\quad1,
\]
all positive. Hence $N\in H(Q)$.

To pass to rational points, let $p_2:\bar X\times\bar X\to\bar X$
be the second projection and let $\Delta_{\bar X}$ be the diagonal.
The line bundle $\mathcal O(N\Delta_{\bar X})$ is flat over the second
factor, and its restriction to the fibre over $Q$ is
$\mathcal O_{\bar X}(NQ)$. Since $p_2$ is proper, upper semicontinuity
\cite[III, Theorem~12.8]{Hartshorne} shows that
\[
 \{Q\in\bar X:h^0(\bar X,\mathcal O_{\bar X}(NQ))\ge2\}
\]
is closed. It contains the complement of the finite set $X(k)$ by
the preceding argument, and hence equals $\bar X$. For rational $P$,
extension of scalars preserves global sections, so $\ell_k(NP)\ge2$.
Thus $m_1(P)\le N$. Conversely, a function of pole order $m_1(P)$
defines a map to $\PP^1$ of degree $m_1(P)$, with only $P$ above
infinity. Each of its $q$ finite rational fibres has at most
$m_1(P)$ rational points. This is Lewittes' bound \cite{Lewittes}:
\[
 q^3+1\le q\,m_1(P)+1.
\]
It gives $m_1(P)\ge N$, proving the equality.

Choose $x$ with the stated pole divisor. If $x$ were not separating, it
would be a cube in $F$, producing the non-gap $N/3$, a contradiction.
Thus $x$ is separating and $[F:k(x)]=N$.
Finally, for a rational point $Q\ne P$, \eqref{eq:frobenius-divisor}
gives $mQ\sim mP$. A function with divisor $mQ-mP$ gives $f$.
\end{proof}

\begin{lemma}\label{lem:plane-frobenius-order}
Let $u\in L(mP)\setminus\Span_k\{1,x\}$ and let $\nu$ be the nonzero
$\F_q$-Frobenius order of the plane series $\langle1,x,u\rangle$.
Then
\begin{equation}\label{eq:order-bound}
 \nu\ge\frac{q-6q_0}{3q_0-2}
 =q_0-2+\frac{2q_0-4}{3q_0-2}>q_0-2,
 \qquad \nu\ge q_0-1\ge2.
\end{equation}
\end{lemma}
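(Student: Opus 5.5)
The natural route is the St\"ohr--Voloch bound \eqref{eq:SV-bound} applied to the plane series $\mathcal D=\langle1,x,u\rangle$, combined with the identities \eqref{eq:SV-identities}. I first make sure \eqref{eq:SV-bound} applies. The series lies in $L(mP)$, has degree $d\le m$, and is base-point-free: $1$ and $u$ are regular away from $P$, and the pole of $u$ at $P$ can be removed after reducing to the pole divisor of the series. More precisely, I work with $\mathcal D$ as the base-point-free series cut out by $\langle1,x,u\rangle$, of degree $d=\max\{N,-v_P(u)\}\le m$. Since $u\notin\Span_k\{1,x\}$, the three functions are linearly independent, so $\mathcal D$ is a genuine plane series. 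Its Frobenius orders are $0$ and some $\nu\ge1$; the first Frobenius order is always $0$.

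With $\#X(k)=q^3+1$ and $2g-2=(3q_0-2)m$, inequality \eqref{eq:SV-bound} gives
\[
 \nu(3q_0-2)m+(q+2)d\ge 2(q^3+1)=2(q-3q_0+1)m .
\]
Using $d\le m$ and dividing by $m$ yields
\[
 \nu(3q_0-2)\ge 2q-6q_0+2-q-2=q-6q_0 .
\]
This is the first inequality in \eqref{eq:order-bound}. The rewriting is a direct check:
\[
 (q_0-2)(3q_0-2)+2q_0-4=3q_0^2-6q_0=q-6q_0 .
\]
Since $2q_0-4>0$ for $q_0\ge3$, we get $\nu>q_0-2$, and as $\nu$ is an integer, $\nu\ge q_0-1\ge2$.

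The point needing care is justifying $d\le m$ together with base-point-freeness. If $u$ has a pole at $P$ of order at most $m$, the series of sections $\{a+bx+cu\}$ of $L(mP)$ has $P$ possibly as a base point of multiplicity $m-\max(N,-v_P(u))$. Removing it lowers the degree to some $d\le m$. Lowering $d$ only weakens the right-hand side's dependence, and the inequality still holds because the degree enters with a positive coefficient. I also need that the Frobenius divisor is defined with multiplicity at least two at each rational point. This is exactly the St\"ohr--Voloch fact already quoted, and holds because rational points lie on their own osculating lines.
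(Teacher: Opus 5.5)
Your proposal is correct and follows the paper's own proof: remove the base point at $P$ to obtain a base-point-free plane series of degree $d\le m$, and apply \eqref{eq:SV-bound} with \eqref{eq:SV-identities} to get $\nu(3q_0-2)\ge q-6q_0$. Then, since $2q_0-4>0$ and $\nu$ is an integer, $\nu\ge q_0-1\ge 2$.
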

\begin{proof}
After removing the fixed divisor, the series has degree $d_u\le m$.
By \eqref{eq:SV-bound} and \eqref{eq:SV-identities},
\begin{align*}
 \nu
 &\ge\frac{2(q^3+1)-(q+2)d_u}{2g-2}\\
 &\ge\frac{2(q^3+1)-(q+2)m}{2g-2}
 =\frac{q-6q_0}{3q_0-2}.
\end{align*}
The last equality in \eqref{eq:order-bound} uses $q=3q_0^2$.
Since $q_0\ge3$ and $\nu$ is an integer, the conclusion follows.
\end{proof}

\begin{proposition}\label{prop:section-identities}
Fix $P\in X(k)$ and choose $x,f$ as above. Every $u\in L(mP)$ satisfies
\begin{equation}\label{eq:section-frobenius}
 u^q-u=(x^q-x)D_xu,\qquad D_x^{(2)}u=0.
\end{equation}
Moreover
\begin{equation}\label{eq:dx}
 (dx)=(2g-2)P.
\end{equation}
If $u\notin\Span_k\{1,x\}$ has pole order $t$, then
\begin{equation}\label{eq:derivative-root}
 D_xu=h_u^{q_0},\qquad
 (h_u)_\infty=\frac{q(t-N)}{q_0}P
\end{equation}
for some $h_u\in F$. The generic orders of $\langle1,x,u\rangle$ are
$(0,1,\epsilon)$ with $\epsilon\in\{q_0,3q_0\}$.
\end{proposition}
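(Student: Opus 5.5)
The plan is to derive everything from the vanishing of the first Frobenius Wronskian of the plane series $\langle1,x,u\rangle$. Then we differentiate, and finally use the $p$-adic criterion.

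\emph{Step 1: the Frobenius identity.} For $u\in\Span_k\{1,x\}$ the identity \eqref{eq:section-frobenius} is trivial. For $u\in L(mP)\setminus\Span_k\{1,x\}$, Lemma~\ref{lem:plane-frobenius-order} gives $\nu\ge2$ for the nonzero Frobenius order of $\langle1,x,u\rangle$. By the definition of Frobenius orders in St\"ohr--Voloch theory, $\nu>1$ means
\[
 \det\begin{pmatrix}1&x^q&u^q\\1&x&u\\0&1&D_xu\end{pmatrix}=0,
\]
which is exactly $u^q-u=(x^q-x)D_xu$. To get $D_x^{(2)}u=0$, apply $D_x$ to this identity. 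Since $D_x(u^q)=0$ and, by \eqref{eq:hasse-composition}, $D_xD_x=2D_x^{(2)}$, we obtain
\[
 -D_xu=-D_xu+2(x^q-x)D_x^{(2)}u .
\]
Hence $D_x^{(2)}u=0$.

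\emph{Step 2: the divisor of $dx$.} By Step 1, $D_xu=(u^q-u)/(x^q-x)$ for all $u\in L(mP)$. The zeros of $x^q-x$ have total degree $qN=q^3$, so they are precisely the $q^3$ rational points $Q\ne P$, each with multiplicity one. At such a $Q$ the function $u^q-u$ also vanishes. Hence $D_xu$ is regular away from $P$ for every $u\in L(mP)$. At a rational $Q\ne P$, $x-x(Q)$ is a uniformizer, so $dx$ does not vanish there.

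For a general $Q\ne P$ with local parameter $t$, we have $D_tu=(D_xu)(D_tx)$. If $v_Q(dx)>0$, every $u\in L(mP)$ would have $D_tu(Q)=0$, so $1$ would not be an order of $|mP|$ at $Q$. I would rule this out using \eqref{eq:frobenius-divisor}: comparing the relation at $Q$ with the relation at a suitable second point (for instance a rational point, or $\Phi^{-1}(Q)$) should produce an element of $L(mP)$ vanishing to order exactly one at $Q$. Thus $dx$ has no zeros off $P$, and $\deg(dx)=2g-2$ forces $(dx)=(2g-2)P$. I expect this verification that $1$ is an order at every $Q$ to be the main technical point of this step.

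\emph{Step 3: $D_xu$ is a $q_0$-th power, and the generic orders.} The generic orders of $\langle1,x,u\rangle$ are $(0,1,\epsilon)$. Since $D_x^{(2)}u=0$, we have $\epsilon\ge3$. By the $p$-adic criterion, every $j$ with $\binom{\epsilon}{j}\not\equiv0\pmod3$ is a generic order, so $\epsilon$ is a power of $3$. The Frobenius orders form a subset of the generic orders with $\nu\ne1$, so $\nu=\epsilon$. Lemma~\ref{lem:plane-frobenius-order} then gives $\epsilon\ge q_0-1$, hence $\epsilon\ge q_0$.

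Now $D_x^{(j)}u=0$ for $1<j<\epsilon$. By \eqref{eq:hasse-composition},
\[
 D_x^{(j)}D_xu=(j+1)D_x^{(j+1)}u .
\]
This vanishes for $1\le j<q_0$: for $j<q_0-1$ because $j+1<\epsilon$, and for $j=q_0-1$ because $q_0\equiv0$. By \eqref{eq:root-criterion}, $D_xu=h_u^{q_0}$. Its divisor comes from $(u^q-u)/(x^q-x)$: the pole at $P$ has order $qt-qN$, and $D_xu$ is regular elsewhere by Step 2. Therefore $(h_u)_\infty=\frac{q(t-N)}{q_0}P$.

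\emph{Step 4: the upper bound $\epsilon\le3q_0$.} This is the part I am least sure of. My first attempt is by contradiction, assuming $\epsilon\ge9q_0$. Then the argument of Step 3 shows $D_xu\in F^{9q_0}$, so $h_u$ is a ninth power, and $h_u^{1/9}$ has pole divisor $\frac{q(t-N)}{9q_0}P$. I would test this against the non-gap structure at $P$: $m_1(P)=N$ (Proposition~\ref{prop:first-nongap}), together with $t\le m$ and $q(m-N)=C+m$ from \eqref{eq:C-identity}.

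Failing that, I would use the St\"ohr--Voloch inequality $\nu\le j_2(Q)-j_1(Q)$ at a rational point $Q\ne P$, where $j_1(Q),j_2(Q)$ are the rational orders of $\langle1,x,u\rangle$ at $Q$. The aim is to exhibit small orders there from \eqref{eq:frobenius-divisor}, using the multiplicities $N,3q_0q,\dots$ appearing in that relation.
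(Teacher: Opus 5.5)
Steps~1 and~3 are the paper's argument. Your proof that $D_xu=(u^q-u)/(x^q-x)$ is regular off $P$, using the simple zeros of $x^q-x$ at the rational points, is a valid substitute for the paper's use of \eqref{eq:dx} at that point.

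The gap is in Step~2. You leave unproved the claim that $1$ is an order of $|mP|$ at every nonrational $Q$, and the points you suggest do not give it. The relation at $\Phi^{-1}(Q)$ has multiplicity $3q_0q$ at $Q$, and a rational point $Q_0$ only yields $mQ_0$, which misses $Q$. The point to use is $\Phi^{-4}(Q)$, where \eqref{eq:frobenius-divisor} becomes
\[
 N\Phi^{-4}(Q)+3q_0q\Phi^{-3}(Q)+2q\Phi^{-2}(Q)+3q_0\Phi^{-1}(Q)+Q\sim mP,
\]
and the five points are distinct by \eqref{eq:first-five}. This is a divisor of $|mP|$ over $\bar k$ with multiplicity exactly one at $Q$. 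By linearity some $u\in L(mP)$ then has $D_tu(Q)\ne0$, and your argument closes.

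The paper avoids this issue. It applies the identity to $f$: since $3\nmid m$, $v_P(df)=-m-1$, while the identity gives $v_P(D_xf)=-q(m-N)$. Hence $v_P(dx)=q(m-N)-m-1=2g-2$ by \eqref{eq:C-identity}. Since $dx$ is regular off $P$ and has degree $2g-2$, \eqref{eq:dx} follows.

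In Step~4 your first attempt is exactly the paper's argument, and it finishes at once; the St\"ohr--Voloch fallback is not needed. Because $m_1(P)=N$, one has $t>N$, so $h_u^{1/9}$ is nonconstant and regular off $P$. Its pole order is therefore a positive non-gap, hence at least $N=q^2$. This gives $q(t-N)/(9q_0)\ge q^2$, i.e.\ $t-N\ge9q_0q$. But $t-N\le m-N=(3q_0+2)q+3q_0+1<9q_0q$, a contradiction. The paper runs the same inequality with $\epsilon$ in place of $9q_0$ to get $\epsilon\le(m-N)/q<3q_0+3$. Since $\epsilon$ is a power of three, this gives $\epsilon\in\{q_0,3q_0\}$.
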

\begin{proof}
The first identity is immediate for $u\in\Span_k\{1,x\}$.
Otherwise, Lemma~\ref{lem:plane-frobenius-order} gives $\nu>1$.
The Frobenius determinant of orders $0,1$ therefore vanishes, giving
\eqref{eq:section-frobenius}. Differentiation yields
$(x^q-x)D_xD_xu=0$. By \eqref{eq:hasse-composition},
$D_xD_xu=2D_x^{(2)}u$, so $D_x^{(2)}u=0$.

Apply the identity to $f$. Since $3\nmid m$, one has
$v_P(df)=-m-1$, whereas $-v_P(D_xf)=q(m-N)$.
Thus $v_P(dx)=q(m-N)-m-1=2g-2$. Since the degree of a canonical divisor
is $2g-2$ and $x$ is regular off $P$, this proves \eqref{eq:dx}.

For $j>1$, the Frobenius determinant of orders $0,j$ is
$-(x^q-x)D_x^{(j)}u$. Consequently $\nu=\epsilon$, the third generic
order. The second identity in \eqref{eq:section-frobenius} excludes
$\epsilon=2$. We give the $3$-adic step explicitly. Write
$\epsilon=\sum d_i3^i$. By Lucas' theorem, the number of indices $j$
for which $\binom\epsilon j\not\equiv0\pmod3$ is $\prod_i(d_i+1)$.
Every such $j$ must be one of the three generic orders
$0,1,\epsilon$. If there were three such indices, we would have
$\epsilon=2\cdot3^a$, and the intermediate index $3^a$ would have
to equal $1$. This would give $\epsilon=2$, already excluded.
There are therefore exactly two such indices, so precisely one digit
of $\epsilon$ is $1$ and the others are zero. Hence $\epsilon$ is a
power of three. In conjunction with \eqref{eq:order-bound}, this gives
$\epsilon\ge q_0$.

For $1\le j<\epsilon$, the composition rule gives
\[
 D_x^{(j)}D_xu=(j+1)D_x^{(j+1)}u=0.
\]
For $j<\epsilon-1$ this follows from the generic orders, and for
$j=\epsilon-1$ from $3\mid\epsilon$. Hence $D_xu=a^\epsilon$ for
some $a\in F$. Since $u\notin\Span_k\{1,x\}$ and $m_1(P)=N$, we have
$t>N$, so the Frobenius identity shows that $a$ is nonconstant. Equation \eqref{eq:dx} shows that $a$ is regular away
from $P$, and the Frobenius identity gives
\[
 -v_P(a)=\frac{q(t-N)}{\epsilon}\ge N.
\]
Therefore
\[
 q_0\le\epsilon\le\frac{t-N}{q}\le\frac{m-N}{q}<3q_0+3.
\]
As $\epsilon$ is a power of three, it is $q_0$ or $3q_0$.
Taking $h_u=a$ or $a^3$, respectively, proves
\eqref{eq:derivative-root}.
\end{proof}

\begin{corollary}\label{cor:finite-splitting}
The extension $F/k(x)$ is unramified away from $P$, and every finite
rational place of $k(x)$ splits completely.
\end{corollary}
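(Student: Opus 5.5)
The corollary has two parts. Unramifiedness away from $P$ comes from the differential $dx$. Complete splitting at finite rational places comes from a point count.

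\emph{Ramification.} By \eqref{eq:dx}, the divisor of $dx$ on $X$ is $(2g-2)P$. In particular, $v_Q(dx)=0$ at every place $Q\ne P$ of $F$.

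For $Q\ne P$, the function $x$ is regular at $Q$. Hence $t=x-x(Q)$ is a local parameter at the place of $k(x)$ below $Q$ (over the residue field of that place). The different exponent of $Q$ over $k(x)$ equals $v_Q(dt/d\pi)$ for a local parameter $\pi$ at $Q$. This is $v_Q(dx)$, because the standard differential $dt$ on $k(x)$ has no zeros or poles at finite places. Thus the different exponent is $0$, and $Q$ is unramified, tamely or wildly.

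Since $(x)_\infty=NP$, every place of $F$ over a finite place of $k(x)$ is different from $P$. So $F/k(x)$ is unramified over every finite place. (For comparison, the Hurwitz formula $2g-2=-2N+\deg\Diff$ forces the whole different to sit at $P$, with exponent $2g-2+2N$.)

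\emph{Splitting.} Count rational points. Exactly one rational point of $X$, namely $P$, lies above $\infty$. The remaining $q^3$ rational points lie over the $q$ finite rational places $x=a$, $a\in k$. Each fibre has at most $[F:k(x)]=N=q^2$ places of degree one, since the degrees $e_if_i$ of the places above $x=a$ sum to $N$.

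Because $\#X(k)-1=q^3=qN$, every fibre must contain exactly $N$ rational places. Then $\sum e_if_i=N$ with $N$ terms forces $e_i=f_i=1$ for all $i$, so each finite rational place splits completely. This is the equality case of the Lewittes bound used in Proposition~\ref{prop:first-nongap}.

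The main point to check is that $v_Q(dx)=0$ really gives unramifiedness in the wild case. It does, because Dedekind's different theorem gives $d_Q\ge e_Q\ge 2$ for any ramified $Q$, so a vanishing different exponent rules out both tame and wild ramification.
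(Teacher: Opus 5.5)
Your proof is correct and follows the paper's argument. Unramifiedness comes from $(dx)=(2g-2)P$, and complete splitting comes from counting the $q^3=qN$ rational points over the $q$ finite rational places, each of degree $N$; the paper phrases this as the $q^3$ points being all the simple zeros of $x^q-x$, whose pole order is $qN$. One small slip: for ramified $Q$, Dedekind's theorem gives only $d_Q\ge e_Q-1\ge1$ (the bound $d_Q\ge e_Q$ holds in the wild case), but this still gives your conclusion.
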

\begin{proof}
The ramification assertion follows from \eqref{eq:dx}. Each of the
$q^3$ rational points different from $P$ is a simple zero of $x^q-x$.
Its pole order is $qN=q^3$, so these are all its zeros. Every fibre
$x=\alpha$, $\alpha\in k$, therefore consists of $N$ rational points.
\end{proof}

\begin{lemma}\label{lem:second-nongap-upper}
For every $P\in X(k)$, one has $m_2(P)\le m-2q$.
\end{lemma}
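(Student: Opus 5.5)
The plan is to produce, from the function $f$ of Proposition~\ref{prop:first-nongap} with $(f)_\infty=mP$, a function regular away from $P$ whose pole order is at most $m-2q$ and is not a multiple of $N$. Such a function lies outside $k[x]$, so together with $1$ and $x$ it gives a third non-gap $\le m-2q$. The source of this function is Proposition~\ref{prop:section-identities} applied to $u=f$, so $t=m$. We have $D_xf=h_f^{q_0}$ and $(h_f)_\infty=\frac{q(m-N)}{q_0}P$, and the third generic order of $\langle1,x,f\rangle$ is $\epsilon\in\{q_0,3q_0\}$. By \eqref{eq:dx}, $h_f$ (or its cube root, when it exists) is regular off $P$. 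We also use
\[
 m-N=3q_0q+q+3q_0+1,\qquad m-2q=q^2+3q_0q-q+3q_0+1.
\]

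\emph{Case $\epsilon=3q_0$.} Here the proof of Proposition~\ref{prop:section-identities} gives $D_xf=a^{3q_0}$ with $a\in F$ regular off $P$ and
\[
 -v_P(a)=\frac{q(m-N)}{3q_0}=q_0(m-N)=q^2+q_0q+q+q_0 .
\]
This is $\le m-2q$, since $q_0q+q+q_0\le 3q_0q-q+3q_0+1$. It is also not divisible by $N=q^2$, because $0<q_0q+q+q_0<q^2$. Hence $a\notin\Span_k\{1,x\}$, and $m_2(P)\le q^2+q_0q+q+q_0\le m-2q$.

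\emph{Case $\epsilon=q_0$.} This is the step I expect to be the main obstacle. The root $h_f$ now has pole order $3q_0(m-N)$, which is far too large. The first thing to try is to show that this case cannot occur for $f$, or else to replace $f$ by another $u\in L(mP)$ for which $\epsilon=3q_0$. Once such a $u$ is found, the argument of the first case applies to it, using $t\le m$ and giving an even smaller pole.

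To exclude $\epsilon=q_0$, I would sharpen \eqref{eq:SV-bound} for $\langle1,x,f\rangle$ with $\nu=q_0$, using the Frobenius divisor more carefully at the rational points. The crude bound gives $q_0(2g-2)+(q+2)m$ versus $2(q^3+1)$, that is,
\[
 (2q-2q_0+2)m\quad\text{versus}\quad(2q-6q_0+2)m,
\]
so it leaves a slack of only $4q_0m$. At $P$ the rational orders of this series are $0,\ m-N,\ m$, so the Frobenius divisor has multiplicity much larger than $2$ there. At every other rational point $Q$ one can use $mQ\sim mP$, that is, the function with divisor $mQ-mP$, to see that the orders at $Q$ are again $0,N$-type and $m$. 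The higher multiplicities coming from the Stöhr--Voloch estimate $v_Q(\text{Frobenius divisor})\ge j_2(Q)-\nu$-type terms should then absorb the $4q_0m$ slack and force $\epsilon\ne q_0$.

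If this refinement does not suffice, the fallback is to use \eqref{eq:section-frobenius} for $h$ itself. Taking $q_0$-th roots in $f^q-f=(x^q-x)h^{q_0}$ after writing $f$ in the basis \eqref{eq:root-criterion} with $e=q_0$, I would extract a function of pole order about $q^2+q_0q$.
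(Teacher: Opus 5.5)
\textbf{The proposal has a genuine gap: the case $\epsilon=q_0$ is left open.}

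Your case $\epsilon=3q_0$ is correct once the arithmetic is fixed. One has $m-N=3q_0q+2q+3q_0+1$, so $m-2q=q^2+3q_0q+3q_0+1$. Then $-v_P(a)=q_0(m-N)=q^2+2q_0q+q+q_0$, which still lies in $(N,m-2q]$, since the difference is $(q_0-1)q+2q_0+1>0$.

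None of your suggested routes for $\epsilon=q_0$ works.
\begin{itemize}
\item \emph{Excluding $\epsilon=q_0$ is impossible for a general $f$ with $(f)_\infty=mP$.} On $\cR_q$ itself, which satisfies \eqref{eq:hypotheses}, one has $y\in L(mP)$ and $D_xy=x^{q_0}$. So if $D_xf=a^{3q_0}$, then $D_x(f+y)=(a^3+x)^{q_0}$ is not a $3q_0$-th power, because $x$ is not a cube.
\item \emph{The St\"ohr--Voloch refinement misreads the local data.} At a rational $Q\ne P$ the function $x-x(Q)$ is a uniformizer, so the orders of $\langle1,x,f\rangle$ there are $0,1,j_2(Q)$. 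The function with divisor $mQ-mP$ need not lie in this plane series. Explicitly, $f=b_0^{q_0}+xh^{q_0}$ with $h=h_f$ and $b_0=f^{3q_0}-x^{3q_0}h$. Hence $D_x^{(q_0)}f=-(x^q-x)(D_xh)^{q_0}$ and $v_Q(S)=2+q_0v_Q(D_xh)$. This equals $2$ at all but at most $\mu(D_xh)<q^3$ rational points. The extra multiplicity at $P$ and at the zero of $f$ totals less than $3m$, far below the slack $4q_0m$.
\item \emph{The fallback produces nothing new.} The $q_0$-th root $b_0$ is just the function $v$ from the proof of Lemma~\ref{lem:approximation}, whose pole order exceeds $3N$.
\item \emph{Replacing $f$} by some $u\in L(mP)$ with $\epsilon=3q_0$ would work, but you give no reason such a $u$ exists.
\end{itemize}

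\textbf{The missing idea is to work with the complete series $|mP|$ rather than a plane subseries.} For a nonrational geometric point $Q$, apply \eqref{eq:frobenius-divisor} at $Q$, $\Phi^{-1}(Q)$ and $\Phi^{-2}(Q)$. This gives members of $|mP|$ with multiplicities $N$, $3q_0q$ and $2q$ at $Q$. Hence these are three distinct generic orders, and $\epsilon_{r-2}\ge 2q$. The orders of $|mP|$ at $P$ are $m-m_i(P)$. The inequality $j_i(P)\ge\epsilon_i$ then gives $m-m_2(P)=j_{r-2}(P)\ge 2q$. This route needs no Hasse derivatives and no case split on $\epsilon$.
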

\begin{proof}
Put $\mathcal D=|mP|$ and $r=\dim\mathcal D$. This series is
base-point-free: the section $1$ does not vanish away from $P$,
and a function with pole divisor $mP$ gives a section not vanishing
at $P$. For a nonrational
geometric point $Q$, the points in \eqref{eq:frobenius-divisor} are
distinct. Applying that relation at $Q$, $\Phi^{-1}(Q)$ and
$\Phi^{-2}(Q)$ gives effective divisors in $\mathcal D$ with respective
multiplicities $N$, $3q_0q$ and $2q$ at $Q$. Thus these three distinct
integers are generic orders of $\mathcal D$; in particular,
$\epsilon_{r-2}\ge2q$.

The orders of $|mP|$ at $P$ are $m-m_r(P),\ldots,m-m_0(P)$, since
the vanishing order at $P$ of the section represented by a function
of pole order $m_i(P)$ is $m-m_i(P)$. Therefore
\[
 m-m_2(P)=j_{r-2}(P)\ge\epsilon_{r-2}\ge2q.
\]
This is the part of \cite[Proposition~3.10]{TT} needed here, now derived
directly from the divisor relation.
\end{proof}

\begin{remark}\label{rem:TT-input}
Proposition~\ref{prop:frobenius-data} includes a proof of the Frobenius
polynomial and divisor relation used in \cite{TT}. The displayed
inequality $(q-6q_0)/(3q_0-2)\ge q_0-1$ in
\cite[Proposition~3.7]{TT} is incorrect: the difference is
$-(q_0+2)/(3q_0-2)$. Lemma~\ref{lem:plane-frobenius-order}
obtains the required bound by integrality of the Frobenius order.
The exclusion of plane order two in \cite[Lemma~4.4]{TT} required
$\nu>2$; Proposition~\ref{prop:section-identities} instead proves
$D_x^{(2)}u=0$, which also excludes that order when $q_0=3$.
Neither of the additional non-gap assumptions of
\cite[Theorem~4.6]{TT} is used.
\end{remark}

\section{The compositum and its genus}\label{sec:compositum}
Fix $P\in X(k)$ and the separating function $x$ above. Put
\[
 K=k(x),\qquad E=FR,\qquad J=F\cap R,\qquad d=[E:F].
\]
Let $\mathcal Y/k$ be the curve with function field $E$. Since $R/K$ is Galois,
restriction gives
\begin{equation}\label{eq:compositum-group}
 H:=\Gal(E/F)\cong\Gal(R/J)\le(k^2,+).
\end{equation}
In particular $d$ is a power of three.

\begin{proposition}\label{prop:compositum-splitting}
The full constant field of $E$ is $k$. The extension $E/F$ is totally
ramified at $P$, unramified elsewhere, and every point of
$X(k)\setminus\{P\}$ splits completely. Thus
\begin{equation}\label{eq:compositum-counts}
 \#\mathcal Y(\F_{q^i})=dq^3+1\qquad(1\le i\le5).
\end{equation}
\end{proposition}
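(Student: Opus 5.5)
The plan is to work over $K=k(x)$ and get every local statement about $E/F$ from the corresponding statements for $F/K$ and $R/K$. Only the total ramification at $P$ needs a global argument.

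\emph{Step 1: the finite places.} Let $\mathfrak p$ be a finite place of $K$.
\begin{itemize}
\item By Corollary~\ref{cor:finite-splitting}, $\mathfrak p$ is unramified in $F$. By the facts recalled after \eqref{eq:ree-degree}, it is unramified in $R$.
\item A compositum of two extensions unramified at $\mathfrak p$ is unramified at $\mathfrak p$ (Abhyankar's lemma, or \cite[Proposition~3.9.2]{Stichtenoth}). Hence $E/K$, and so $E/F$, is unramified away from the places above $x=\infty$. Since $(x)_\infty=NP$, the only such place of $F$ is $P$.
\item If $\mathfrak p$ corresponds to $x=\alpha$ with $\alpha\in k$, then $\mathfrak p$ splits completely in $F$ by Corollary~\ref{cor:finite-splitting}, and in $R$. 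A place that splits completely in two finite separable extensions splits completely in their compositum \cite[Proposition~3.9.6]{Stichtenoth}. So $\mathfrak p$ has $[E:K]$ places above it in $E$, each with ramification index and relative residue degree $1$.
\end{itemize}
Consequently every $Q\in X(k)\setminus\{P\}$ splits completely in $E/F$.

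\emph{Step 2: the constant field.} Take any place of $E$ above a rational finite place of $K$, say above $x=0$. By Step~1 its residue field is $k$. The full constant field of $E$ embeds into that residue field, so it equals $k$. In particular $E/F$ and all its intermediate extensions are geometric.

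\emph{Step 3: total ramification at $P$.} Since $H=\Gal(E/F)$ is abelian, all places above $P$ share one inertia group $I$. Let $E^I$ be its fixed field.
\begin{itemize}
\item $E^I/F$ is unramified at $P$, and by Step~1 it is unramified everywhere. By Step~2 it is geometric, and it is an elementary abelian $3$-extension.
\item If $E^I\ne F$, there is an intermediate field $F\subset M\subseteq E^I$ with $M/F$ cyclic of degree $3$, geometric and unramified.
\item $X$ has $3$-rank zero by Proposition~\ref{prop:frobenius-data}. The Deuring--Shafarevich formula then gives $\gamma_3(M)-1=3(0-1)=-3$, which is impossible.
\end{itemize}
Hence $I=H$ and $P$ is totally ramified in $E/F$. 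There is exactly one place of $E$ above $P$, and it is rational because its residue degree is $1$.

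\emph{Step 4: the point counts.} Fix $1\le i\le5$. By \eqref{eq:first-five}, $\#X(\F_{q^i})=q^3+1=\#X(k)$, so every $\F_{q^i}$-point of $X$ is $k$-rational. Every $\F_{q^i}$-point of $\mathcal Y$ lies over an $\F_{q^i}$-point of $X$, hence over a point of $X(k)$. By Steps~1 and~3, each such fibre consists of $k$-points: $d$ of them over each of the $q^3$ points $Q\ne P$, and one over $P$. This gives \eqref{eq:compositum-counts}.

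The main point requiring care is Step~3. Total ramification cannot be read off directly, because $F/K$ need not be Galois and both $F$ and $R$ are wildly ramified at infinity. This is why I route it through the global $3$-rank argument, which relies on the constant-field statement of Step~2 to make $E^I/F$ geometric.
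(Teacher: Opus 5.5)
Your proof is correct and follows the paper's argument. Total ramification at $P$ comes from the inertia field, the $3$-rank zero and Deuring--Shafarevich; the constant field comes from a rational place; and the point counts follow from $X(\F_{q^i})=X(k)$. The one difference is in Step~1: the paper base-changes the \'etale defining equations of $R/K$ to $F$, noting that they reduce to $y^q-y=z^q-z=0$ at rational $Q\ne P$, whereas you use the compositum results over $K$ together with Corollary~\ref{cor:finite-splitting}.
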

\begin{proof}
The defining equations of $R/K$ are integral at every finite place and
have derivative $-1$. They define an \emph{\'etale} algebra there;
this property is preserved by base change. Thus $E/F$ is unramified off
$P$. At a rational point different from $P$, the equations reduce to
$y^q-y=z^q-z=0$, so that point splits completely. In particular $E$
has a rational place and its full constant field is $k$.

Let $I\le H$ be the inertia group above $P$. Since $H$ is abelian,
$E^I/F$ is unramified everywhere and geometric. If $I\ne H$, it has
a cyclic subextension of degree three, contrary to the $3$-rank zero
of $F$ and the Deuring--Shafarevich formula. Hence $I=H$.
Finally \eqref{eq:first-five} implies $X(\F_{q^i})=X(k)$ for
$1\le i\le5$, proving \eqref{eq:compositum-counts}.
\end{proof}

\begin{lemma}\label{lem:approximation}
There are $A,\delta\in F$ with
\begin{equation}\label{eq:approximation}
 x=A^{q_0}+\delta,\qquad v_P(A)=-N/q_0,\qquad v_P(\delta)=C.
\end{equation}
\end{lemma}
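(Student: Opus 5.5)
The plan is to build $A$ and $\delta$ from the function $f$ of Proposition~\ref{prop:first-nongap}, using the derivative identity of Proposition~\ref{prop:section-identities} and then the root criterion \eqref{eq:root-criterion} with $e=q_0$.

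First we check that $f$ qualifies. Since $(f)_\infty=mP$ and $m>N$, we have $f\in L(mP)\setminus\Span_k\{1,x\}$. Hence \eqref{eq:derivative-root} gives $D_xf=h^{q_0}$ with
\[
 (h)_\infty=\frac{q(m-N)}{q_0}P,
\]
and the generic orders of $\langle1,x,f\rangle$ are $(0,1,\epsilon)$ with $\epsilon\in\{q_0,3q_0\}$. In particular $h\neq0$, and $D_x^{(i)}f=0$ for $2\le i<\epsilon$, hence for $2\le i<q_0$.

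Next we define
\[
 \delta=h^{-q_0}f,\qquad a=x-\delta,
\]
and show that $a\in F^{q_0}$. By \eqref{eq:root-criterion} with $e=q_0$, it suffices to show $D_x^{(i)}a=0$ for $1\le i<q_0$. The factor $h^{-q_0}$ lies in $F^{q_0}$, so all its Hasse derivatives of orders $1,\dots,q_0-1$ vanish. To see this, write $h^{-q_0}=\sum_j b_j^{q_0}x^j$ in the $F^{q_0}$-basis; uniqueness of the expansion forces only $j=0$, so it behaves as a constant for $D_x^{(i)}$ with $i<q_0$. The Leibniz rule therefore gives
\[
 D_x^{(i)}\delta=h^{-q_0}D_x^{(i)}f\qquad(1\le i<q_0).
\]
For $i=1$ this equals $h^{-q_0}h^{q_0}=1=D_xx$. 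For $2\le i<q_0$ it equals $0=D_x^{(i)}x$. Hence $D_x^{(i)}a=0$ for $1\le i<q_0$, and $a=A^{q_0}$ for some $A\in F$.

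Finally we compute valuations. We have $v_P(h^{-q_0})=q(m-N)$ and $v_P(f)=-m$, so \eqref{eq:C-identity} gives
\[
 v_P(\delta)=q(m-N)-m=C.
\]
Since $C>0>-N=v_P(x)$, it follows that $v_P(A^{q_0})=v_P(x-\delta)=-N$, that is, $v_P(A)=-N/q_0$. This proves \eqref{eq:approximation}.

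The only step needing care is the vanishing of the low-order Hasse derivatives of the $q_0$-th power $h^{-q_0}$. I would justify it either by the uniqueness of the expansion $F=\bigoplus_{j<q_0}x^jF^{q_0}$, or via the Taylor formula: $b^{q_0}(x+T)=b(x+T)^{q_0}$ involves only powers $T^{q_0 k}$. Everything else is bookkeeping with \eqref{eq:derivative-root} and \eqref{eq:C-identity}.
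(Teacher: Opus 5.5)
Your proof is correct. It produces the same $\delta=f/h^{q_0}$ as the paper, and hence the same $A$, since $A^{q_0}=x-\delta$ determines $A$. However, you reach $x-\delta\in F^{q_0}$ by a different mechanism.

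The paper does not use the root criterion here. Writing $D_xf=u^{q_0}$, it rewrites the Frobenius identity $f^q-f=(x^q-x)u^{q_0}$ as $f^q-x^qu^{q_0}=f-xu^{q_0}$. Because $q=3q_0\cdot q_0$, the left side equals $(f^{3q_0}-x^{3q_0}u)^{q_0}$. This gives the explicit formula $A=-(f^{3q_0}-x^{3q_0}u)/u$ in one line.

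You instead invoke the generic-order statement $\epsilon\ge q_0$ of Proposition~\ref{prop:section-identities}, so that $D_x^{(i)}f=0$ for $2\le i<q_0$. With the Leibniz rule, $\delta$ then agrees with $x$ in every Hasse derivative of order below $q_0$, and \eqref{eq:root-criterion} finishes. The paper's route is shorter and yields $A$ as an explicit rational expression in $f,x,u$. Yours is more conceptual: it exhibits the approximation as a matching of $x$ by $\delta$ up to order $q_0-1$, and never uses the Frobenius identity beyond what already went into \eqref{eq:derivative-root}.

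One small point concerns your two justifications that $D_x^{(i)}$ kills $h^{-q_0}$ for $1\le i<q_0$. The one via uniqueness of the expansion in the basis $1,x,\dots,x^{q_0-1}$ is circular. Knowing that $h^{-q_0}$ lies in the $j=0$ summand $F^{q_0}$ does not by itself show that $D_x^{(i)}$ annihilates that summand; that is exactly the fact being claimed. Use the Taylor-formula argument instead. The map $a\mapsto\sum_jD_x^{(j)}a\,T^j$ is a ring homomorphism $F\to F[[T]]$ (this is the product rule), so in characteristic three it sends $b^{q_0}$ to $\sum_j(D_x^{(j)}b)^{q_0}T^{q_0j}$, which has no terms $T^i$ with $0<i<q_0$.
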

\begin{proof}
Write $D_xf=u^{q_0}$ as in \eqref{eq:derivative-root} and put
$v=f^{3q_0}-x^{3q_0}u$. The Frobenius identity gives
$v^{q_0}=f-xu^{q_0}$, whence
\[
 x=(-v/u)^{q_0}+f/u^{q_0}.
\]
Take $A=-v/u$ and $\delta=f/u^{q_0}$. Then
$v_P(\delta)=-m+q(m-N)=C>0$. Since $v_P(x)=-N$, the valuation of
$A$ follows.
\end{proof}

\begin{proposition}\label{prop:conductor}
Every nontrivial character of $H$ has Swan conductor at $P$ at most
\begin{equation}\label{eq:b}
 b=(3q_0+2)N/q_0=3N+6q_0q.
\end{equation}
Consequently, writing $\Delta=g(E)-g$, if $d>1$ then
\begin{equation}\label{eq:upper}
 \frac{\Delta}{(d-1)N}
 \le\frac{3q_0}{2}+2+\frac1{q_0}-\frac{q+3q_0+1}{2N}.
\end{equation}
\end{proposition}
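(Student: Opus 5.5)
The plan is to realize every nontrivial character of $H$ by an explicit Artin--Schreier class over $F$, bound its pole order at $P$ with Lemma~\ref{lem:approximation}, and then feed the bound into the conductor--discriminant formula and Hurwitz.

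\emph{Step 1: the characters.} By \eqref{eq:compositum-group}, restriction identifies $H$ with the subgroup $\Gal(R/J)$ of $\Gal(R/K)\cong(k^2,+)$. Every character of $H$ therefore extends to a character of $\Gal(R/K)$. Since $H$ is elementary abelian, every nontrivial character of $H$ is cubic, and its kernel is a subgroup of index $3$ in $H$. So it is the restriction of one of the cubic characters described in the preliminaries. That character corresponds to the cubic extension of $K$ defined by $w^3-w=p_{\beta,\gamma}(x)$. Its restriction to $H=\Gal(E/F)$ is the Artin--Schreier character of $F$ attached to the same function $p_{\beta,\gamma}(x)$, now regarded as an element of $F$. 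It suffices to bound the Swan conductor at $P$ of the class of $p_{\beta,\gamma}(x)$ in $F/\wp(F)$, for each $(\beta,\gamma)$ whose character is nontrivial on $H$.

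\emph{Step 2: the conductor bound (main step).} Substitute $x=A^{q_0}+\delta$ from Lemma~\ref{lem:approximation} into \eqref{eq:cubic-polynomial}; this substitution is the step on which the whole bound rests.
\begin{itemize}
\item \emph{Error term.} The polynomial $p_{\beta,\gamma}$ has degree at most $3q_0+2$. By the binomial expansion, every term of $p_{\beta,\gamma}(x)-p_{\beta,\gamma}(A^{q_0})$ contains at least one factor $\delta$. Hence it has valuation at least
\[
 C-(3q_0+1)N=-(q+3q_0+1),
\]
using \eqref{eq:genus-identity}.
\item \emph{Main term.} For each monomial, \eqref{eq:wp-power} gives
\[
 cA^{q_0j}\equiv c^{1/q_0}A^{j}\pmod{\wp(F)}.
\]
The right side has pole order $jN/q_0\le(3q_0+2)N/q_0=b$.
\end{itemize}
So the class is represented by a function whose pole order at $P$ is at most $\max\{b,\,q+3q_0+1\}=b$. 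The Swan bound of \cite[Proposition~3.7.8]{Stichtenoth} then gives $\sw_P\le b$. The identity $b=3N+6q_0q$ is immediate from $N=q^2$ and $q=3q_0^2$.

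One could instead try Lemma~\ref{lem:local-pullback} with $t=1/x$ and $e=N$. I would avoid it, because the hypothesis $d_0+1>e$ needs care. The approximation argument above is self-contained.

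\emph{Step 3: the genus bound.} By Proposition~\ref{prop:compositum-splitting}, $E/F$ is ramified only at $P$, totally, and $P$ is rational. The different exponent at $P$ is given by \eqref{eq:conductor-discriminant}. Each of the $d-1$ nontrivial characters is ramified with Artin conductor $\sw+1\le b+1$. Hence
\[
 \deg\Diff(E/F)\le(d-1)(b+1).
\]
The constant field of $E$ is $k$, so Hurwitz gives
\[
 2g(E)-2=d(2g-2)+\deg\Diff(E/F),
\]
and therefore
\[
 2\Delta\le(d-1)(2g-2)+(d-1)(b+1)=(d-1)(2g-1+b).
\]
Dividing by $2(d-1)N$ and inserting \eqref{eq:genus-identity}, $b=3N+6q_0q$ and $6q_0q/N=2/q_0$ yields
\[
 \frac{\Delta}{(d-1)N}\le\frac{3q_0+4}{2}+\frac1{q_0}-\frac{q+3q_0+1}{2N},
\]
which is \eqref{eq:upper}.
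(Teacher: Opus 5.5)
Your proposal is correct and follows the paper's proof essentially step by step: the characters of $H$ are realized by the classes $p_{\beta,\gamma}(x)$, the approximation $x=A^{q_0}+\delta$ gives the representative $p^{[-q_0]}(A)+p(x)-p(A^{q_0})$ of pole order at most $b$, and the conductor--discriminant formula with Riemann--Hurwitz gives $\Delta\le(d-1)(g+(b-1)/2)$ and hence \eqref{eq:upper}. The only cosmetic difference is that you identify the restricted character directly as the Artin--Schreier character of $p_{\beta,\gamma}(x)$ over $F$, whereas the paper checks this through the equality of fixed fields $FR'=E^{\ker\lambda}$.
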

\begin{proof}
Fix a nontrivial additive character of $\F_3$. Composing with it
identifies nontrivial characters of $H$ with nonzero linear functionals
$\lambda:H\to\F_3$. Extend $\lambda$ to a linear functional
$\widetilde\lambda:(k^2,+)\to\F_3$. The trace pairing represents it by
a pair $(\beta,\gamma)$ in \eqref{eq:cubic-polynomial}.
Let $R'=R^{\ker\widetilde\lambda}$ be the associated cubic field.
Since $H\cap\ker\widetilde\lambda=\ker\lambda$, one has
$FR'\subseteq E^{\ker\lambda}$. Moreover $R'\not\subseteq F$:
otherwise $R'\subseteq F\cap R$ and
$H=\Gal(R/F\cap R)\subseteq\ker\widetilde\lambda$, contrary to
$\lambda\ne0$. Therefore $F\cap R'=K$ and $[FR':F]=3$.
Both fields in the preceding inclusion have degree three over $F$, so
\begin{equation}\label{eq:character-base-change}
 FR'=E^{\ker\lambda}.
\end{equation}
Thus the pullback of \eqref{eq:cubic-polynomial} represents the
character under consideration.

Write $p(T)=p_{\beta,\gamma}(T)=\sum c_jT^j$ and
$p^{[-q_0]}(T)=\sum c_j^{1/q_0}T^j$. By
\eqref{eq:wp-power} and \eqref{eq:approximation}, its local class over
$F$ has representative
\[
 p^{[-q_0]}(A)+p(x)-p(A^{q_0}).
\]
The first term has pole order at most $(3q_0+2)N/q_0=b$. For the second,
\[
 v_P(x^j-A^{q_0j})\ge C-(j-1)N,
\]
so its pole order is at most
$(3q_0+1)N-C=q+3q_0+1<b$. This proves the conductor bound.

All nontrivial characters are ramified at $P$, by
Proposition~\ref{prop:compositum-splitting}. The conductor--discriminant
formula gives
\[
 \deg\Diff(E/F)\le(d-1)(b+1).
\]
Riemann--Hurwitz therefore yields
\begin{equation}\label{eq:genus-upper-b}
 \Delta\le(d-1)\left(g+\frac{b-1}{2}\right).
\end{equation}
Substitution of \eqref{eq:genus-identity} and \eqref{eq:b} proves
\eqref{eq:upper}.
\end{proof}

We now apply Lemma~\ref{lem:positive-trace}. Put $N_i=\#X(\F_{q^i})$.
For $d>1$ and nonnegative autocorrelations $a_i$, it gives
\begin{equation}\label{eq:relative-bound}
 \frac{\Delta}{(d-1)N}\ge
 L_{\boldsymbol c}(q,d):=
 \frac1{a_0}\sum_{i=1}^r
 a_i\frac{dq^3+1-N_i}{(d-1)Nq^{i/2}},
\end{equation}
since $\#\mathcal Y(\F_{q^i})\ge\#\mathcal Y(k)=dq^3+1$. This expression is
nondecreasing in $d$, because
\[
 \frac{dq^3+1-N_i}{d-1}
 =q^3-\frac{N_i-(q^3+1)}{d-1},\qquad N_i\ge q^3+1.
\]
The two coefficient vectors used below have autocorrelations
\begin{equation}\label{eq:vectors}
\begin{array}{c|l}
 \boldsymbol c&(a_0,a_1,\ldots,a_r)\\ \hline
 (2,4,5,5,4,2)&(90,81,60,36,16,4)\\
 (4,8,11,13,14,13,11,8,4)&(936,890,773,614,441,280,152,64,16).
\end{array}
\end{equation}
We denote these vectors by $\boldsymbol c^{(5)}$ and
$\boldsymbol c^{(8)}$, respectively.

\begin{proposition}\label{prop:lower-five}
If $d>1$, then
\begin{equation}\label{eq:lower-five}
 \frac{\Delta}{(d-1)N}\ge
 \frac9{10}\sqrt q+\frac23+\frac{2}{5\sqrt q}
 +\frac8{45q}+\frac2{45q^{3/2}}.
\end{equation}
In particular Theorem~\ref{thm:main} holds for $q_0\ge27$.
\end{proposition}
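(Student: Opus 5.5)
The bound \eqref{eq:lower-five} will come from applying \eqref{eq:relative-bound} with the vector $\boldsymbol c^{(5)}=(2,4,5,5,4,2)$. Its autocorrelations $(90,81,60,36,16,4)$ from \eqref{eq:vectors} are all nonnegative, so lower bounds for the $M_i$ may be used.

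By \eqref{eq:first-five}, $N_i=q^3+1$ for $1\le i\le5$. Hence each numerator simplifies exactly:
\[
 \frac{dq^3+1-N_i}{(d-1)N}=\frac{(d-1)q^3}{(d-1)q^2}=q.
\]
This holds for every $d>1$, so no monotonicity in $d$ is even needed. Thus
\[
 L_{\boldsymbol c^{(5)}}(q,d)=\frac1{90}\sum_{i=1}^5 a_i\,q^{1-i/2}
 =\frac{81\sqrt q+60+36q^{-1/2}+16q^{-1}+4q^{-3/2}}{90}.
\]
Reducing the fractions gives $\frac9{10}\sqrt q+\frac23+\frac2{5\sqrt q}+\frac8{45q}+\frac2{45q^{3/2}}$, which is \eqref{eq:lower-five}. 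The only check is the autocorrelation table, which is a short convolution computation.

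For the consequence, suppose $d>1$ and compare \eqref{eq:lower-five} with the upper bound \eqref{eq:upper}. Write $\sqrt q=\sqrt3\,q_0$. The leading terms are $\frac{9\sqrt3}{10}q_0\approx1.5588\,q_0$ on the lower side versus $\frac32q_0$ on the upper side. The difference of the linear coefficients is $\bigl(\frac{9\sqrt3}{10}-\frac32\bigr)q_0\approx0.0588\,q_0$. The constant terms contribute $\frac23-2=-\frac43$. The remaining terms are $O(1/q_0)$: on the upper side $\frac1{q_0}$ minus a positive quantity, and on the lower side a positive quantity. Therefore the lower bound exceeds the upper bound as soon as $0.0588\,q_0>\frac43+\frac1{q_0}$, which holds for $q_0\ge27$ (since $0.0588\cdot27\approx1.59>1.37$), but fails for $q_0=9$. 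This contradiction forces $d=1$, that is, $R\subseteq F$.

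Since $[R:K]=q^2=N=[F:K]$ by \eqref{eq:ree-degree} and Proposition~\ref{prop:first-nongap}, we get $F=R$. So $X$ is $k$-isomorphic to the Ree curve, proving Theorem~\ref{thm:main} for $q_0\ge27$.

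The main point requiring care is the numerical comparison: the margin $0.0588\,q_0-\frac43$ is small, so I would write it out exactly. Concretely, I would show that $\frac{9\sqrt3}{10}-\frac32>\frac{1}{18}$ (since $9\sqrt3>15.5$). Then $q_0/18\ge\frac32>\frac43+\frac1{q_0}$ for $q_0\ge27$, while discarding the favorable $O(q^{-1/2})$ terms on the lower side and the negative term $-\frac{q+3q_0+1}{2N}$ on the upper side.
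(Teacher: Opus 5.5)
Your proposal is correct and is essentially the paper's proof: it uses the same vector $\boldsymbol c^{(5)}$, the same exact evaluation $L_{\boldsymbol c^{(5)}}(q,d)=\frac1{90}\sum_{i=1}^5a_iq^{1-i/2}$, and the same final comparison $\frac{q_0}{18}-\frac43-\frac1{q_0}\ge\frac7{54}>0$ after discarding the small terms.

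There is one slip, in the exact check. Your justification $9\sqrt3>15.5$ only gives a linear margin of $\frac1{20}q_0$, and that margin fails at $q_0=27$, since $\frac{27}{20}-\frac43-\frac1{27}=-\frac{11}{540}$. What you need is $\frac{9\sqrt3}{10}>\frac{14}{9}$, i.e.\ $81\sqrt3>140$, which holds because $3\cdot81^2=19683>19600=140^2$.
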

\begin{proof}
Use $\boldsymbol c^{(5)}$ in \eqref{eq:relative-bound} and
$N_i=q^3+1$ for $1\le i\le5$. This gives
\eqref{eq:lower-five}.
If $q_0\ge27$ and $d>1$, the lower bound is strictly greater than
$\frac9{10}\sqrt3\,q_0+\frac23$, whereas \eqref{eq:upper} is less
than $\frac32q_0+2+1/q_0$. But
\[
 \frac9{10}\sqrt3>\frac{14}{9},\qquad
 \frac{14q_0}{9}+\frac23-\left(\frac32q_0+2+\frac1{q_0}\right)
 =\frac{q_0}{18}-\frac43-\frac1{q_0}\ge\frac7{54}>0.
\]
This is impossible. Hence $d=1$, so $R\subseteq F$. Since both
extensions have degree $N$ over $K$, they are equal.
\end{proof}

\section{The small parameters: genus bounds}\label{sec:small-parameters}
The preceding section proves the theorem for $q_0\ge27$. Thus only
$q_0=9$ and $q_0=3$ remain. In this section the genus bounds settle
$q_0=9$ and reduce a counterexample for $q_0=3$ to $F\cap R=k(x)$.
The latter case is completed in Section~\ref{sec:q3}.

We first use the intersection $J=F\cap R$ to improve the conductor
bound. Put
\[
 e_J=[J:K],\qquad n=[F:J]=N/e_J.
\]
Thus $d=n$. The character group of $J/K$ is an $\F_3$-subspace
$\Lambda\subset k^2$ of order $e_J$, through
\eqref{eq:cubic-polynomial}. Set
\[
 t_J=|\Lambda\cap(k\times\{0\})|.
\]

\begin{lemma}\label{lem:intersection-bound}
If $F\ne R$, every nontrivial character of $E/F$ has Swan conductor
at $P$ at most
\begin{equation}\label{eq:intersection-b}
 b_J=\max\left\{\frac{N(t_J+3q_0+1)}{3e_J},\ q+3q_0+1\right\}.
\end{equation}
\end{lemma}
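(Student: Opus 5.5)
The plan is to work over $J$ instead of $K$. I would compute the Swan conductor of each character of $R/J$ at the place $\infty_J$ of $J$ above the pole of $x$, and then pull back to $F_P$ with Lemma~\ref{lem:local-pullback}, taking $K_0=\hat J_{\infty_J}$ and $K_1=\hat F_P$. Both completions have residue field $k$. The pole of $x$ is totally ramified in $R/K$, hence in $J/K$. By Proposition~\ref{prop:compositum-splitting} the place $P$ is totally ramified in $E/F$, and hence in $F/J$. The ramification index is therefore $e=n=N/e_J$. Since $F\ne R$ we have $J\ne F$ (otherwise $F\subseteq R$ with equal degree over $K$), so $n>1$ is a power of three and $3\mid e$, as Lemma~\ref{lem:local-pullback} requires.

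\emph{Step 1 (conductors over $J$).} By \eqref{eq:character-base-change}, restriction identifies the characters of $E/F$ with those of $H\cong\Gal(R/J)$. I would compute their Swan conductors at $\infty_J$ from the upper ramification filtration of $R/K$ at infinity. By \eqref{eq:ree-swan}, the upper jumps are $3q_0+1$ and $3q_0+2$. The characters with conductor $3q_0+1$ are exactly those with $\gamma=0$, so $G^{u}$ for $3q_0+1<u\le3q_0+2$ is the annihilator of $k\times\{0\}$. For a character $\chi$ of $R/J$, extend it to $\widetilde\chi$ on $\Gal(R/K)$. Its Swan conductor over $J$ is $\psi_{J/K}$ applied to the upper conductor of $\widetilde\chi$, minimized over extensions, that is, over the coset $\widetilde\chi\Lambda$. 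The Herbrand function of $J/K$ is determined by the image of this filtration in $\Gal(J/K)$, whose dual is $\Lambda$. It has $\psi_{J/K}(v)=v$ for $v\le3q_0+1$ and slope $[\Gal(J/K):\text{image of }G^{v}]=t_J$ on $(3q_0+1,3q_0+2]$. Hence every character of $R/J$ has Swan conductor at most
\[
 c=\psi_{J/K}(3q_0+2)=3q_0+1+t_J
\]
at $\infty_J$. Alternatively, one can argue directly with representatives $p_{\beta,\gamma}$ modulo $\Lambda$ as in the proof of Proposition~\ref{prop:conductor}. I would use the Herbrand route.

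\emph{Step 2 (pullback).} Lemma~\ref{lem:local-pullback} bounds the Swan conductor over $F_P$ by $\max\{nc/3,\ n(c+1)-d_0-1\}$. The first term equals $N(t_J+3q_0+1)/(3e_J)$. For $d_0$, I would use transitivity of differents in $F/J/K$ together with \eqref{eq:dx}. The different exponent of $F/K$ at $P$ is $v_P(dx)+N+1=2g-2+N+1$, and the exponent $d_J$ of $J/K$ at $\infty_J$ comes from the conductor--discriminant formula \eqref{eq:conductor-discriminant}:
\[
 d_J=(t_J-1)(3q_0+2)+(e_J-t_J)(3q_0+3).
\]
Then $d_0=2g-1+N-n\,d_J$, and with \eqref{eq:genus-identity} the second term becomes an explicit expression in $n,e_J,t_J,q_0$.

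\emph{Main obstacle.} The hard step is showing that $n(c+1)-d_0-1\le q+3q_0+1$ in every admissible configuration of $(e_J,t_J)$. This is where the sharp local approximation must enter, and the crude lemma might lose too much. If the numerics fail, I would fall back on the finer argument of Proposition~\ref{prop:conductor}. There, Lemma~\ref{lem:approximation} writes $x=A^{q_0}+\delta$, and the error term $p(x)-p(A^{q_0})$ already has pole order at most $q+3q_0+1$; this is precisely the second entry of $b_J$. The main term $p^{[-q_0]}(A)$ would then be rewritten using a coset representative modulo $\Lambda$, chosen so that its pole order after reduction modulo $\wp(F)$ is at most $nc/3$. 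The fact justifying this is that $p_\lambda(x)\in\wp(F)$ for $\lambda\in\Lambda$, because $J\subseteq F$.
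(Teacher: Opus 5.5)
Your plan is the paper's. You bound the Swan conductor of characters of $R/J$ at $\infty_J$ by $c=t_J+3q_0+1$, compute the different exponent $d_0$ of $F/J$ at $P$, and apply Lemma~\ref{lem:local-pullback} with $e=n$. Your Step~1 via the Herbrand function is correct. It is a legitimate alternative to the paper's route, which computes $2g(J)=e_J(3q_0+1)-t_J-3q_0$ from \eqref{eq:ree-swan} and the conductor--discriminant formula, applies this to each cubic $J'/J$ inside $R$, and reads off the conductor $t_J+3q_0+1$ or $3q_0+1$ from Riemann--Hurwitz for $J'/J$.

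The gap is in Step~2, which you never complete and flag as the main obstacle. Moreover, the formula you set up there is wrong. The different exponent of $F/K$ at $P$ is not $v_P(dx)+N+1$. With the uniformizer $1/x$ at infinity it is $v_P(d(1/x))=v_P(dx)+2N=2g-2+2N$; equivalently, use Riemann--Hurwitz for $F/K$, which is unramified away from $P$. Your value is too small by $N-1$. Carried through, it gives $n(c+1)-d_0-1=q+3q_0+N$, which is not the bound claimed. For instance, for $q_0=3$ and $e_J\ge9$ it is $765$, whereas the lemma yields $b_J\le513$.

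With the correct value, transitivity gives $d_0=2g-2+2N-n\,d_J$, with your $d_J=e_J(3q_0+3)-t_J-3q_0-2$. Since $ne_J=N$, \eqref{eq:genus-identity} yields
\[
 d_0=n(t_J+3q_0+2)-(q+3q_0+2).
\]
This is \eqref{eq:different-FJ}; the paper obtains it directly as $(2g-2)-n(2g(J)-2)$. Hence $n(c+1)-d_0-1=q+3q_0+1$ \emph{identically}, for every $(e_J,t_J)$, while $nc/3=N(t_J+3q_0+1)/(3e_J)$. The bound in Lemma~\ref{lem:local-pullback} is monotone in $c$, so characters of conductor $3q_0+1$ are covered as well.

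So there is no obstacle, no case analysis, and no role for Lemma~\ref{lem:approximation} here. Your fallback, choosing a coset representative modulo $\Lambda$ whose reduction has pole order at most $nc/3$, is not an argument as stated, and it is unnecessary.

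A minor point: total ramification of $P$ in $F/J$ follows from $(x)_\infty=NP$, i.e.\ from total ramification in $F/K$, not from total ramification in $E/F$.
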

\begin{proof}
There are $t_J-1$ nontrivial characters of $J/K$ with $\gamma=0$,
and $e_J-t_J$ with $\gamma\ne0$. By \eqref{eq:ree-swan}, the
conductor--discriminant formula and Riemann--Hurwitz give
\begin{equation}\label{eq:genus-J}
 2g(J)=e_J(3q_0+1)-t_J-3q_0.
\end{equation}
A cubic subextension $J'/J$ of $R/J$ corresponds to a character space
$\Lambda'\supset\Lambda$ of order $3e_J$. Its intersection with
$k\times\{0\}$ has order $t_J$ or $3t_J$. Applying
\eqref{eq:genus-J} to $J'$ and using Riemann--Hurwitz for $J'/J$ shows
that its Swan conductor is, respectively,
\[
 t_J+3q_0+1\quad\hbox{or}\quad3q_0+1.
\]

Both $F/K$ and $J/K$ are totally ramified at infinity and unramified
elsewhere. The different exponent of $F/J$ is therefore
\begin{align}
 d_0&=(2g-2)-n(2g(J)-2)\notag\\
 &=n(t_J+3q_0+2)-(q+3q_0+2).\label{eq:different-FJ}
\end{align}
As $F\ne R$, we have $n>1$ and $3\mid n$. Also $F\cap J'=J$, so
$FJ'/F$ is a cubic quotient of $E/F$. Apply
Lemma~\ref{lem:local-pullback} with $e=n$ and
$c=t_J+3q_0+1$. Its second bound is $q+3q_0+1$, giving
\eqref{eq:intersection-b}.
\end{proof}

\begin{lemma}\label{lem:small-intersection}
If $q_0\in\{3,9\}$ and $F\ne R$, then $e_J\le3$.
\end{lemma}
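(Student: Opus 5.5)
The plan is to play the sharpened conductor bound of Lemma~\ref{lem:intersection-bound} against the trace lower bound \eqref{eq:lower-five}, and to show that they are incompatible as soon as $e_J\ge9$. Suppose $F\ne R$ and $e_J\ge9$. Then $d=n=N/e_J>1$.

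\emph{Upper bound.} Proposition~\ref{prop:compositum-splitting} says every nontrivial character of $E/F$ is ramified only at $P$. Combining this with the conductor--discriminant formula \eqref{eq:conductor-discriminant}, Riemann--Hurwitz and the bound $b_J$, exactly as in the proof of \eqref{eq:genus-upper-b}, gives
\[
 \Delta\le(d-1)\Bigl(g+\frac{b_J-1}{2}\Bigr).
\]
Dividing by $(d-1)N$ and using \eqref{eq:genus-identity} for $g/N$ yields
\[
 \frac{\Delta}{(d-1)N}\le\frac{3q_0+1}{2}-\frac{q+3q_0}{2N}+\frac{b_J-1}{2N}.
\]
Next I bound $b_J/N$ using only the trivial inequality $t_J\le e_J$ (since $\Lambda\cap(k\times\{0\})\subseteq\Lambda$) and the assumption $e_J\ge9$:
\[
 \frac{N(t_J+3q_0+1)}{3e_J N}\le\frac13+\frac{3q_0+1}{3e_J}\le\frac13+\frac{3q_0+1}{27}.
\]
The other entry $(q+3q_0+1)/N$ of the maximum is tiny, so it is dominated by this. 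Hence
\[
 \frac{\Delta}{(d-1)N}\le\frac{3q_0+1}{2}+\frac16+\frac{3q_0+1}{54}.
\]

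\emph{Lower bound and numerical check.} The bound \eqref{eq:lower-five} was derived from \eqref{eq:relative-bound} using only $d>1$ and \eqref{eq:first-five}, so it holds here unchanged. It remains to verify that it beats the upper bound in both remaining cases.
\begin{itemize}
\item For $q_0=3$ we have $q=27$ and $\sqrt q\approx5.196$. The lower bound is about $4.677+0.667+0.077+0.007>5.42$. The upper bound is $5+\tfrac16+\tfrac{10}{54}<5.36$; the exact value, including the $-\frac{q+3q_0+1}{2N}$ correction, is even smaller.
\item For $q_0=9$ we have $\sqrt q=9\sqrt3\approx15.59$. The lower bound exceeds $14.03+0.67=14.70$. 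The upper bound is at most $14+\tfrac16+\tfrac{28}{54}<14.69$.
\end{itemize}
These contradictions force $e_J\le3$.

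\emph{Main obstacle.} The margin for $q_0=9$ is very thin, so the main work is making this comparison rigorous with exact rational arithmetic. That means keeping the $-\frac{q+3q_0}{2N}$ term and the lower-order terms of \eqref{eq:lower-five}, rather than relying on decimal estimates.

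If the margin fails, I would fall back on the longer vector $\boldsymbol c^{(8)}$ from \eqref{eq:vectors}. This requires upper bounds for $N_6,N_7,N_8$, which I would compute from the Frobenius polynomial \eqref{eq:frobenius-polynomial} by continuing the power-sum table in the proof of Proposition~\ref{prop:frobenius-data}. The monotonicity of $L_{\boldsymbol c}(q,d)$ in $d$ then lets me evaluate the bound at the smallest admissible value of $d$. As a second fallback, I could use a finer bound on $t_J$ coming from the structure of $\Lambda$.
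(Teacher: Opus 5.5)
Your proposal is correct and follows the paper's proof. As in the paper, you use $t_J\le e_J$ and $e_J\ge9$ to get $b_J\le513$ (for $q_0=3$) and $b_J\le80919$ (for $q_0=9$), insert this into \eqref{eq:genus-upper-b}, and compare with \eqref{eq:lower-five}; the paper makes the comparison rigorous with exact rational certificates based on $\sqrt3>433/250$.

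One small fix is needed for $q_0=9$. Your step ``$14.03+0.67$'' rounds both $\frac9{10}\sqrt{243}\approx14.0296$ and $\frac23$ upward, so it does not by itself justify the bound $>14.70$. That bound only holds once you include the term $\frac{2}{5\sqrt q}\approx0.026$, which brings the total to about $14.72$.
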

\begin{proof}
Suppose that $e_J\ge9$. Since $t_J\le e_J$,
\eqref{eq:intersection-b} gives $b_J\le513$ for $q_0=3$ and
$b_J\le80919$ for $q_0=9$. By \eqref{eq:genus-upper-b}, these give
upper bounds $3883/729$ and $867010/59049$ for
$\Delta/((d-1)N)$, respectively. The lower bounds in
\eqref{eq:lower-five} are
\[
 \frac{818}{1215}+\frac{12005}{4374}\sqrt3\quad(q_0=3),
 \qquad
 \frac{7298}{10935}+\frac{4791721}{590490}\sqrt3\quad(q_0=9).
\]
Using $\sqrt3>433/250$, their differences from the respective upper
bounds are greater than
\[
 \frac{21973}{218700}>0,\qquad
 \frac{1937731}{49207500}>0.
\]
This contradiction proves the assertion.
\end{proof}

The higher point counts needed below are determined by
\eqref{eq:frobenius-polynomial}. For $S_i(b)$ the power sum of the
roots of $T^2+bT+q$, one has
\begin{equation}\label{eq:point-recurrence}
 S_0(b)=2,\quad S_1(b)=-b,\quad
 S_i(b)=-bS_{i-1}(b)-qS_{i-2}(b),\quad
 N_i=q^i+1-A_0S_i(0)-B_0S_i(3q_0).
\end{equation}
Thus
\begin{equation}\label{eq:small-counts}
\begin{array}{c|r|r}
 i&q=27&q=243\\\hline
 1,2,3,4,5&19684&14348908\\
 6&530200972&229611338954164\\
 7&10073464156&49825657453689460\\
 8&282056464900&12157460415213443596.
\end{array}
\end{equation}

In the following comparisons we use $\sqrt3>433/250$, since
$3\cdot250^2-433^2=11>0$.

\subsection{Completion for \texorpdfstring{$q_0=9$}{q0=9}}
\begin{proposition}\label{prop:q9}
Theorem~\ref{thm:main} holds for $q_0=9$.
\end{proposition}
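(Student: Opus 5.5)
We argue by contradiction: assume $q_0=9$ (so $q=243$, $N=q^2=59049$) and $F\ne R$. Then $d=n=[F:J]>1$, and Lemma~\ref{lem:small-intersection} gives $e_J\in\{1,3\}$. In either case $t_J\le e_J\le 3$, so Lemma~\ref{lem:intersection-bound} bounds the Swan conductor by
\[
 b_J=\max\left\{\frac{N(t_J+28)}{3e_J},\ q+28\right\}\le\frac{29N}{3}
\]
when $e_J=1$. When $e_J=3$ it gives $b_J\le 31N/9$. This is sharper than the bound $b=29N/9$ of Proposition~\ref{prop:conductor} only when $e_J=3$. For $e_J=1$ I would instead use Proposition~\ref{prop:conductor} directly, since then $J=K$ and $b=3N+6q_0q=29N/9$ holds. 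So in all cases I take $b\le 31N/9$. Inserting this into \eqref{eq:genus-upper-b} together with \eqref{eq:genus-identity} gives an explicit rational upper bound $U$ for $\Delta/((d-1)N)$. It is roughly $\tfrac{3q_0+1}{2}+\tfrac{31}{18}\approx 15.72$, minus a term of order $1/q_0$.

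For the lower bound I would apply \eqref{eq:relative-bound} with the longer vector $\boldsymbol c^{(8)}$. This uses the point counts $N_6,N_7,N_8$ for $q=243$ from \eqref{eq:small-counts}. Since $L_{\boldsymbol c}(q,d)$ is nondecreasing in $d$ and $d\ge3$, it suffices to evaluate at $d=3$. For $i\le5$ the summands are $a_iq^3/(Nq^{i/2})$. For $i=6,7,8$ we have $N_i>q^3+1$, so each summand loses a correction of $(N_i-q^3-1)/2$.

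The leading behaviour of this lower bound is $\frac{890}{936}\sqrt q\approx 0.951\cdot 15.59\approx 14.8$, plus the lower-order terms $a_2/a_0+\cdots$, which add about $0.83$. The result should be near $15.7$, so the comparison with $U$ is tight. This numerical comparison is where I expect the difficulty to lie. If $\boldsymbol c^{(8)}$ at $d=3$ does not beat $U$, I would first try to sharpen the conductor bound in the case $e_J=3$.

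That sharpening would use the refined value $b_J=N(t_J+28)/9$. It would also use the observation that only the characters of $E/F$ coming from cubic extensions $J'/J$ with $t_{J'}=3t_J$ can have the larger conductor, while the others have conductor at most $28N/9$. Counting characters through the conductor–discriminant formula \eqref{eq:conductor-discriminant} then gives a weighted average below $b_J$. Alternatively I would exploit $d=N/e_J$, which is huge, so that the $d$-dependent terms essentially vanish and $L_{\boldsymbol c}(q,d)\approx L_{\boldsymbol c}(q,\infty)$.

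Once a strict inequality between the lower bound and $U$ is established, using $\sqrt3>433/250$ to certify it with rational arithmetic, we obtain a contradiction. Hence $d=1$, so $R\subseteq F$, and equality of degrees over $K$ gives $F=R$. Therefore $X\cong\cR_q$ over $k$.
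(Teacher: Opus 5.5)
Your framework matches the paper's: the conductor upper bound \eqref{eq:genus-upper-b} against the trace lower bound \eqref{eq:relative-bound} with $\boldsymbol c^{(8)}$. But both numerical inputs you commit to make the comparison fail.

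The first problem is the conductor bound. Proposition~\ref{prop:conductor} bounds the Swan conductor of every nontrivial character of $H$ by $b=29N/9=190269$, whatever $J$ is, since its proof uses only Lemma~\ref{lem:approximation}. For $e_J\le3$, Lemma~\ref{lem:intersection-bound} never improves on this. For $e_J=3$ it gives $29N/9$ or $31N/9$ according as $t_J=1$ or $t_J=3$, so it is weaker, not sharper. Taking $b\le31N/9$ ``in all cases'' raises the upper bound to $928246/59049\approx15.720$. On the other hand, $(dq^3+1-N_i)/(d-1)\le q^3$ for every $d$, hence
$L_{\boldsymbol c^{(8)}}(243,d)\le\frac1{936}\sum_{i=1}^8a_i\,243^{1-i/2}\approx15.692$.
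So with your conductor bound, $\boldsymbol c^{(8)}$ gives no contradiction for any $d$. Once you use the uniform bound $29N/9$, no refinement of the $e_J=3$ case is needed.

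The second problem is the degree. Evaluating at $d=3$ is fatal. The terms with $i=6,7,8$ contain $-(N_i-q^3-1)/(d-1)$, and at $d=3$ the $i=8$ term of \eqref{eq:relative-bound} alone is about $-505$, so the lower bound is negative. The missing observation is that $d=n=N/e_J\ge N/3=19683$ by Lemma~\ref{lem:small-intersection}; monotonicity then lets you evaluate at $d=19683$.

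You also cannot replace this by $L_{\boldsymbol c}(q,\infty)$. That limit is an upper bound for $L_{\boldsymbol c}(q,d)$, not a lower one. At $d=19683$ the $d$-dependent terms still contribute about $-0.067$, roughly four times the final margin.

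The paper evaluates
$L_{\boldsymbol c^{(8)}}(243,19683)=\frac{11698260132+129548463869\sqrt3}{15108631434}\approx15.6257$
and compares it with $921685/59049\approx15.6088$, which comes from $b=190269$ and $g=826551$. After replacing $\sqrt3$ by $433/250$, the difference is $6898481753/419684206500>0$. With $b=29N/9$ and $d=19683$, your argument becomes exactly this proof.
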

\begin{proof}
Suppose $F\ne R$. By Lemma~\ref{lem:small-intersection},
$e_J\in\{1,3\}$, so $d=N/e_J\ge19683$. Use
$\boldsymbol c^{(8)}$ in \eqref{eq:relative-bound}. Its monotonicity
in $d$, together with \eqref{eq:small-counts}, gives
\[
 \frac{\Delta}{(d-1)N}\ge L_{\boldsymbol c^{(8)}}(243,19683)
 =\frac{11698260132+129548463869\sqrt3}{15108631434}.
\]
On the other hand, \eqref{eq:genus-upper-b} with
$g=826551$ and $b=190269$ gives the upper bound $921685/59049$.
Replacing $\sqrt3$ by $433/250$ in the lower bound, its difference
from the upper bound is
\[
 \frac{6898481753}{419684206500}>0,
\]
a contradiction. Thus $F=R$.
\end{proof}

\subsection{Reduction for \texorpdfstring{$q_0=3$}{q0=3}}
\begin{proposition}\label{prop:q3-intersection}
If $q_0=3$ and $X$ is not the Ree curve, then, for every choice of $P$
and $x$ as in Proposition~\ref{prop:first-nongap},
\begin{equation}\label{eq:q3-intersection}
 F\cap R=k(x),\qquad [FR:F]=729.
\end{equation}
\end{proposition}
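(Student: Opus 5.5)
The plan is to argue by contradiction, using Lemma~\ref{lem:small-intersection} to restrict to a single case. Assume $q_0=3$ and that $X$ is not the Ree curve. Then $F\ne R$ for every choice of $P$ and $x$, since $F=R$ would make $X$ $k$-isomorphic to $\cR_q$. Lemma~\ref{lem:small-intersection} gives $e_J\in\{1,3\}$. If $e_J=1$, then $J=K$ and $d=n=N=729$, which is \eqref{eq:q3-intersection}. So everything reduces to excluding $e_J=3$. In that case $n=d=243$, and the parameters are $q=27$, $N=729$, $g=4329$, $m=28\cdot37$ and $t_J\in\{1,3\}$.

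First I would try the crude comparison. Lemma~\ref{lem:intersection-bound} with $e_J=3$ gives $b_J\le\max\{81(t_J+10),37\}\le1053$. Then \eqref{eq:genus-upper-b} yields
\[
 \frac{\Delta}{(d-1)N}\le\frac{g+526}{729}=\frac{4855}{729}.
\]
On the other side, \eqref{eq:relative-bound} with $\boldsymbol c^{(8)}$ and $d=243$, using the counts in \eqref{eq:small-counts} and monotonicity in $d$, gives an explicit lower bound of the shape $(\alpha+\beta\sqrt3)/\gamma$. A rough estimate puts this near $5.8$. That is probably short of $4855/729\approx6.66$, so the crude comparison may fail and a refinement is needed.

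The refinement is to bound the discriminant of $E/F$ character by character instead of using $(d-1)(b_J+1)$. Nontrivial characters of $H=\Gal(R/J)$ correspond, in pairs $\pm\chi$, to cubic subextensions $J'/J$ inside $R$, that is, to character spaces $\Lambda'\supset\Lambda$ of order $9$. The proof of Lemma~\ref{lem:intersection-bound} shows that the Swan conductor of $J'/J$ is $t_J+10$ if $|\Lambda'\cap(k\times\{0\})|=t_J$, and $10$ if it equals $3t_J$. Pulling back by Lemma~\ref{lem:local-pullback} with $e=n=243$ and the different exponent from \eqref{eq:different-FJ}, the Swan conductor of $FJ'/F$ is at most $81\cdot10=810$ in the second case and $81(t_J+10)$ in the first. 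Counting the $\Lambda'$ in each class is elementary linear algebra in $k^2\cong\F_3^6$. For example, when $t_J=1$, exactly $13$ of the $121$ lines $\Lambda'/\Lambda$ pick up a vector $(\beta,0)$. Feeding these counts into \eqref{eq:conductor-discriminant} and Riemann--Hurwitz gives a sharper upper bound for $\Delta/((d-1)N)$ in each case $t_J=1,3$.

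The decisive step, and the main obstacle, is the numerical comparison of this refined upper bound with $L_{\boldsymbol c^{(8)}}(27,243)$, using $\sqrt3>433/250$ as in Proposition~\ref{prop:q9}. If it still falls short, I would try two further measures. The first is to optimise the coefficient vector $\boldsymbol c$ with nonnegative autocorrelations for $q=27$, using the exact counts $N_6,N_7,N_8$. The second is to exploit that the pulled-back conductors are in fact close to exact, giving an exact value of $\Delta$ rather than a bound, and then test it against Lemma~\ref{lem:positive-trace} directly. Once $e_J=3$ is excluded for every admissible $P$ and $x$, \eqref{eq:q3-intersection} follows.
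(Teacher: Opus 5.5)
There is a genuine gap, and it comes from an arithmetic slip. For $q_0=3$ the genus is $g=\frac32\cdot3\cdot26\cdot31=3627$, equivalently $2g=10\cdot729-36$ by \eqref{eq:genus-identity}. It is not $4329$: you used $q+3q_0+1=37$ where the genus formula has $q+q_0+1=31$. With the correct value, the crude comparison you set up already works. Inequality \eqref{eq:genus-upper-b} with $b_J\le1053$ gives
\[
\frac{\Delta}{(d-1)N}\le\frac{3627+526}{729}=\frac{4153}{729}\approx5.697,
\]
whereas
\[
L_{\boldsymbol c^{(8)}}(27,243)=\frac{5279310+19984889\sqrt3}{6880302}\approx5.798 .
\]
Making this rigorous with $\sqrt3>433/250$ excludes $e_J=3$. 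That is exactly the paper's proof, and no refinement is needed.

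As written, however, your proposal does not prove the statement. It stops at the decisive inequality and defers to refinements whose success is never established. Under your numbers those refinements could not succeed either.

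\begin{itemize}
\item For $d=243$, the terms $i\le5$ of \eqref{eq:relative-bound} equal $(a_i/a_0)\,q^{1-i/2}\le q^{1-i/2}$, and the terms $i\ge6$ are nonpositive, as in Remark~\ref{rem:trace-limit}. So no admissible coefficient vector yields more than $\sum_{i=1}^5 27^{1-i/2}<6.44$, which is below your $4855/729\approx6.66$.
\item The character-by-character conductor count helps only a little. In the best case $t_J=1$ there are $13$ lines with pulled-back Swan conductor at most $810$ and $108$ with at most $891$. This only lowers your upper bound to about $6.54$.
\end{itemize}

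So, with the wrong genus, the refinement route would have been hopeless; with the right genus it is unnecessary. The fix is simply to correct $g$ and finish the comparison you already wrote down.
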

\begin{proof}
The hypothesis implies $F\ne R$, so $e_J\le3$ by
Lemma~\ref{lem:small-intersection}. If $e_J=3$, then $d=243$ and
\eqref{eq:intersection-b} gives $b_J\le1053$.
Since $g=3627$, the upper bound for $\Delta/((d-1)729)$ is $4153/729$.
But \eqref{eq:relative-bound} gives
\[
 \frac{\Delta}{(d-1)729}\ge L_{\boldsymbol c^{(8)}}(27,243)
 =\frac{5279310+19984889\sqrt3}{6880302}.
\]
Using $\sqrt3>433/250$, the difference from $4153/729$ is greater than
$174280937/1720075500>0$. Hence $e_J=1$.
\end{proof}

\begin{remark}\label{rem:trace-limit}
For $q_0=3$ and $F\cap R=k(x)$, the trace method of
Section~\ref{sec:compositum} cannot conclude, whatever coefficient
vector is used. Here $d=729$ and $b=2673$, so \eqref{eq:genus-upper-b}
gives the upper bound $4963/729>6.80$ for $\Delta/((d-1)N)$. Let
$\boldsymbol c$ have autocorrelations with $a_0>0$ and $a_i\ge0$ for
$i\ge1$. In \eqref{eq:relative-bound}, the term of index $i\le5$
equals $(a_i/a_0)\,q^{1-i/2}$, and $a_i\le a_0$ by the Cauchy--Schwarz
inequality. For $i\ge6$ the Weil bound gives
$N_i\ge q^i+1-2gq^{i/2}>dq^3+1$, so those terms are nonpositive. Hence
\[
 L_{\boldsymbol c}(27,729)\le\sum_{i=1}^5 27^{1-i/2}<6.44 .
\]
This is why a different argument is needed in Section~\ref{sec:q3}.
\end{remark}

\section{Completion for \texorpdfstring{$q_0=3$}{q0=3}}\label{sec:q3}
The case $q_0=9$ is settled by Proposition~\ref{prop:q9}. It remains to
complete the case $q_0=3$, using the intersection obtained in
Proposition~\ref{prop:q3-intersection}. By Remark~\ref{rem:trace-limit},
genus bounds alone do not suffice here.

Suppose that $X/\F_{27}$ satisfies \eqref{eq:hypotheses} but is not
isomorphic to the Ree curve. Fix $P\in X(k)$ and choose $x$ as in
Proposition~\ref{prop:first-nongap}. We have
\[
 k=\F_{27},\quad N=729,\quad m=1036,\quad g=3627,\quad C=7253,
 \qquad F\cap R=k(x).
\]
The argument first restricts the small non-gaps of $H(P)$.
Lemma~\ref{lem:gap} uses the second-order congruence
$u^{27}\equiv u+(x^{27}-x)u'\pmod{(x^{27}-x)^2}$.
The operator $\T$ in Lemma~\ref{lem:defect} measures the remaining
term after the second Hasse derivative is included. Applied to a
section of pole order $m_2(P)$, it gives the normal form in
Lemma~\ref{lem:normal}. Proposition~\ref{prop:q3} then contradicts
$F\cap R=k(x)$ in each case.

Let $\A$ be the ring of functions regular away from $P$, and put
\[
 D=x^{27}-x,\qquad \V=L(1036P),\qquad
 u_j=D_x^{(j)}u,\qquad u'=u_1.
\]
Hasse derivatives always carry a subscript, as in $D_x^{(j)}$ or
$D_z^{(j)}$; the letter $D$ alone denotes the function $x^{27}-x$.
We call a section $u\in\V$ nonlinear if
$u\notin\Span_k\{1,x\}$.
For $u\ne0$ write $\mu(u)=\max\{0,-v_P(u)\}$, and set $\mu(0)=0$.
Every nonzero element of $\A$ has $v_P(u)\le0$: otherwise it would
be regular everywhere and vanish at $P$. Thus a nonconstant element
of $\A$ has positive pole order in $H(P)$. In particular, for
$u\in\A\setminus\{0\}$ and $j\ge0$,
\begin{equation}\label{eq:product-poles}
 \mu(x^ju)=729j+\mu(u),\qquad
 \mu(Du)=19683+\mu(u).
\end{equation}
Any estimate obtained below by subtracting the pole order of $x^j$
or $D$ is applied to a nonzero product; if the remaining factor is
zero, the asserted upper bound holds automatically.

Since $(dx)=7252P$, differentiation preserves $\A$ and
\begin{equation}\label{eq:ordinary-budget}
 \mu(u')\le\mu(u)+7253.
\end{equation}
Indeed, $du=u'\,dx$ and a pole of order $M$ contributes a pole of
order at most $M+1$ to $du$. If $3\mid M$ and $du\ne0$, the leading
term disappears upon differentiation, so $v_P(du)\ge-M$ instead.

Write $Y,Z\in R$ for the generators $y,z$ of \eqref{eq:ree-model};
capitals are used because $z$ denotes a local parameter below. They satisfy
$Y^{27}-Y=x^3D$ and $Z^{27}-Z=x^6D$. Put $W=x^{10}-Y^9$.
Then
\begin{align}
 W^{27}-W
 &=x^{270}-x^{10}-(Y^{27}-Y)^9\notag\\
 &=x^{270}-x^{10}-x^{27}(x^{243}-x^9)
 =x^{36}-x^{10}=x^9D.\label{eq:W}
\end{align}
Consequently, if $\eta\in F$ satisfies
\begin{equation}\label{eq:Ree-membership}
 \eta^{27}-\eta=D(\kappa_0+\kappa_3x^3+\kappa_6x^6+\kappa_9x^9),\qquad \kappa_i\in k,
\end{equation}
then $\eta\in F\cap R=k(x)$. To prove this, subtract
$\kappa_0x+\kappa_3Y+\kappa_6Z+\kappa_9W$ from $\eta$ and use that all roots of
$T^{27}-T$ belong to $k$. If also $\eta\in\A$, it is a polynomial
in $x$: a finite pole in $k(x)$ would remain a pole at a place of
$F$ different from $P$. Its positive pole order is therefore
divisible by $729$.

For $u\in\V$, Proposition~\ref{prop:section-identities} gives
\begin{equation}\label{eq:small-sections}
 u^{27}-u=Du',\quad u'=h_u^3,\quad
 w_u=u^9-x^9h_u,\quad u=w_u^3+xh_u^3.
\end{equation}
These functions belong to $\A$. If $u\notin\Span_k\{1,x\}$ has
pole order $a$, then
\begin{equation}\label{eq:small-poles}
 \mu(h_u)=9(a-729),\qquad \mu(w_u)=9(a-729)+243.
\end{equation}
The first equality follows from $u^{27}-u=Dh_u^3$. For the second,
the pole order $729+27(a-729)$ of $xh_u^3$ is greater than $a$,
since $a>729$, so it cannot cancel with $u$ in $w_u^3=u-xh_u^3$.
Finally, Lemma~\ref{lem:second-nongap-upper} gives
\begin{equation}\label{eq:m2-upper}
 m_2(P)\le1036-54=982.
\end{equation}

\subsection{Small pole orders and the cubic operator}
\begin{lemma}\label{lem:gap}
One has $H(P)\cap(1036,1458)=\varnothing$. The only positive non-gap
below $1458$ that is not divisible by three is $1036$. Moreover,
\begin{equation}\label{eq:L1458}
 L(1458P)=\V\oplus kx^2.
\end{equation}
\end{lemma}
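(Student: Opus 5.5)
The plan is to study a function $u\in\A$ with pole order $a$ in the range $729<a<1458$ through the second-order congruence
\[
 u^{27}\equiv u+Du'\pmod{D^2}.
\]
For $u\in\V$ this is an exact identity by \eqref{eq:small-sections}. For general $u\in L(1458P)$ I would try to recover it from the Frobenius divisor relation \eqref{eq:frobenius-divisor}. The heuristic is that along the fibres $x=\alpha$, with $\alpha\in k$, the function $u^{27}-u$ vanishes at all $q^3$ rational points other than $P$, so it is divisible by $D$ in $\A$. I would make this precise through a pole count, in the same way as Corollary~\ref{cor:finite-splitting}.

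Concretely, write $u^{27}-u=D\,\phi$ with $\phi\in\A$ and compare pole orders:
\[
 \mu(\phi)=27a-19683.
\]
This is less than $27\cdot 729=19683$ precisely when $a<1458$. Differentiating gives
\[
 -u'=D_x(D)\,\phi+D\phi'=-\phi+D\phi',
\]
so $\phi-u'$ is divisible by $D$. For $a<1458$, the budget \eqref{eq:ordinary-budget} gives
\[
 \mu(u')\le a+7253<19683,
\]
and also $\mu(\phi)<19683$. Since every nonzero element of $D\A$ has pole order at least $19683$ by \eqref{eq:product-poles}, we get $\phi=u'$. Hence $u^{27}-u=Du'$ exactly.

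Differentiating once more, as in Proposition~\ref{prop:section-identities}, gives $Du_2=0$, so $D_x^{(2)}u=0$. This is the part that needs care: the argument must not assume $u\in\V$. Instead, $u$ enters the plane series $\langle 1,x,u\rangle$ of degree $a<1458$. I would rerun Lemma~\ref{lem:plane-frobenius-order} with $d_u\le a$, or simply use the exact identity just derived, which already forces Frobenius order $\nu>1$.

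From $u'=h^{\epsilon}$ with $\epsilon\in\{3,9\}$, the Proposition~\ref{prop:section-identities} argument gives
\[
 \mu(h)=\frac{27(a-729)}{\epsilon}.
\]
This must be $0$ or a non-gap, since $h\in\A$. With $\epsilon=3$ this gives the non-gap $9(a-729)$; with $\epsilon=9$ it gives $3(a-729)$.

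The key counting step is as follows. Non-gaps below $1458$ are $729$ and the elements of $(729,1458)$. Write $a=729+r$ with $0<r<729$. Then $h$ has pole order $9r$ or $3r$, and $h^{27}-h=D h'$ relates it back. I would instead exploit $m_2(P)\le 982$ together with $m=1036\in H(P)$. The elements $x^2$, $x f$, $f^2$ and $f$ generate pole orders $1458$, $1765$, $2072$ and $1036$.

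Suppose $a\in(1036,1458)$ is a non-gap. Take the section $w_u$ from \eqref{eq:small-sections}, generalized to $u$; it has
\[
 \mu(w_u)=9(a-729)+243.
\]
Reducing $u=w_u^3+xh^3$ forces $3\mu(w_u)$ or $729+3\mu(h)$ to equal $a$. Otherwise the leading poles must cancel, and the pole order is governed by whichever of the two terms is larger. The pole order of $xh^3$ is
\[
 729+27(a-729)\ge a,
\]
with equality only at $a=729$. So the only way to balance is impossible for $a>729$, unless $\epsilon=9$. In that case $u'=h^9$ and $\mu(h)=3(a-729)$, and the relation must be compared with Lemma~\ref{lem:second-nongap-upper}-type generic-order constraints applied to $|aP|$.

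I expect the main obstacle to be excluding the values $a\in(1036,1458)$ and identifying $1036$ as the unique non-multiple of three. Here I would use the differential normal form: $u'=h^{\epsilon}$ with $\mu(h)$ a non-gap in a range where the only candidates are $729$, $1036$ and multiples of $3$. For example, $\mu(h)=9r$ must be a non-gap, and $9r\ge 1458$ forces $r\ge 162$.

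Finally, \eqref{eq:L1458} follows by dimension counting. Once the non-gaps up to $1458$ are known to be $0$, $729$, the non-gaps in $\V$, and $1458$, we have $\ell(1458P)=\dim\V+1$. Since $x^2\notin\V$ and has pole order $1458$, this gives the decomposition.
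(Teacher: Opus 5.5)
Your derivation of the exact identity $u^{27}-u=Du'$ for $u\in\A$ with $0<a=\mu(u)<1458$ is correct. It is a mild variant of the paper's step: you divide by $D$ and use $\phi-u'=D\phi'$ together with $\mu(\phi),\mu(u')<19683$. The paper instead shows that $G=u^{27}-u-Du'$ vanishes to order at least two at each of the $19683$ finite rational points, while its only pole, at $P$, has order below $39366$. The problem is what comes after this step.

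You never draw the immediate consequence of that identity, and the argument you substitute for it does not work. From $\phi=u'$ you already have $\mu(u')=\mu(\phi)=27(a-729)$, and \eqref{eq:ordinary-budget} gives $\mu(u')\le a+7253$. Hence $26a\le 26936$, i.e.\ $a\le1036$, which is precisely $H(P)\cap(1036,1458)=\varnothing$. If in addition $3\nmid a$, then $v_P(du)=-a-1$, so the budget is attained: $27(a-729)=a+7253$, and $a=1036$.

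Your ``key counting step'' instead looks for a contradiction in $u=w_u^3+xh^3$, but there is none. The inequality $\mu(xh^3)=729+27(a-729)>a$ just forces the leading poles of $w_u^3$ and $xh^3$ to cancel, which is exactly how $\mu(w_u)$ in \eqref{eq:small-poles} is computed. The same decomposition, with the same cancellation, exists for the section $f$ of pole order $1036$, whether or not $h$ is itself a cube. So reasoning of this shape cannot exclude $(1036,1458)$ without also excluding $1036$. Likewise, your treatment of the case $3\nmid a$ assumes that the only available non-gaps are $729$, $1036$ and multiples of three, which is the assertion being proved.

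Once the two inequalities above are inserted, the dimension count for \eqref{eq:L1458} goes through as you describe. The detour through $u_2=0$, $u'=h^\epsilon$ and generic orders then becomes unnecessary.
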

\begin{proof}
Let $u\in\A$ have positive pole order $a<1458$, and put
$G=u^{27}-u-Du'$. At each of the $19683$ finite rational points
$Q$, the function $z=x-x(Q)$ is a uniformizer and $D=-z+z^{27}$.
Writing $u=c_0+c_1z+O(z^2)$, with $c_i\in k$, gives
$u^{27}\equiv c_0$, $u\equiv c_0+c_1z$ and $Du'\equiv-c_1z$
modulo $z^2$. Thus $v_Q(G)\ge2$.
On the other hand, the only possible pole of $G$ is $P$, of order
at most
\[
 \max\{27a,\,19683+a+7253\}<39366=2\cdot19683.
\]
A nonzero function has zero and pole divisors of the same degree;
hence $G=0$. This also gives $u'\ne0$ and
\[
 27(a-729)=-v_P(u')\le a+7253,
 \qquad 26a\le26936=26\cdot1036.
\]
Thus $a\le1036$. If $3\nmid a$, then $v_P(du)=-a-1$, so the
last inequality is an equality and $a=1036$. This integer is a
non-gap by \eqref{eq:xf-poles}.

The pole order of $x^2$ is $1458$. For any other function of that
order, a scalar multiple of $x^2$ cancels its leading term, because
$P$ is rational. The remainder has pole order less than $1458$,
and therefore at most $1036$. This proves \eqref{eq:L1458}.
\end{proof}

\begin{lemma}\label{lem:defect}
For every $u\in\A$ there is a unique $\T(u)\in\A$ such that
\begin{equation}\label{eq:defect}
 \T(u)^3=\frac{u^{27}-u-Du'-D^2u_2}{D^3}.
\end{equation}
The operator $\T$ is additive. Put $M=\mu(u)$. Then:
\begin{enumerate}
\item if $M<2187$, then $\T(u)=0$; if $M=2187$, then $\T(u)\in k$;
\item if $\T(u)=0$, then $M\le2072$;
\item if $M>2187$, then $\mu(\T(u))=9(M-2187)$;
\item if $1458<M<2187$, then $u_2=r^3$ for some $r\in\A$ with
$\mu(r)=9(M-1458)$.
\end{enumerate}
\end{lemma}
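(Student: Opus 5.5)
The plan is to prove divisibility by $D^3$ locally, then use the kernel of $D_x$ to extract a cube root, and finally compare pole orders at $P$. Put
\[
 G=u^{27}-u-Du'-D^2u_2\in\A .
\]

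\textbf{Divisibility by $D^3$.} First I show $v_Q(G)\ge3$ at each of the $19683$ finite rational points $Q$. This extends the argument of Lemma~\ref{lem:gap} to one more order. With $z=x-x(Q)$ we have $D=-z+z^{27}$. Write $u=c_0+c_1z+c_2z^2+O(z^3)$; then $u'=c_1+2c_2z+O(z^2)$ and $u_2=c_2+O(z)$. Modulo $z^3$ this gives $u^{27}\equiv c_0$, $Du'\equiv -c_1z-2c_2z^2$ and $D^2u_2\equiv c_2z^2$. Hence
\[
 u+Du'+D^2u_2\equiv c_0+(1-2+1)c_2z^2=c_0 ,
\]
so $G\equiv0\pmod{z^3}$. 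The zeros of $D$ are exactly these points, and they are simple by Corollary~\ref{cor:finite-splitting}. Therefore $G/D^3\in\A$.

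\textbf{Extracting the cube root.} Next I show $D_xG=0$. We have $D_x(u^{27})=0$ and $D_xD=-1$. By \eqref{eq:hasse-composition}, $D_xu'=2u_2$ and $D_xu_2=3u_3=0$. The product rule then gives
\[
 D_x(u+Du'+D^2u_2)=u'+(-u'+2Du_2)+(-2Du_2)=0 .
\]
By \eqref{eq:three-basis}, $G\in F^3$. Since $D^3$ is a cube, $G/D^3=\T(u)^3$ for some $\T(u)\in F$. Valuations of cubes are divisible by three, so $\T(u)\in\A$. Uniqueness holds because cube roots are unique in characteristic three. Additivity follows because every term of $G$ is additive in $u$ and cube roots are additive.

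\textbf{Pole estimates.} The key auxiliary input is a budget for $\mu(u_2)$. I expect this to be the main obstacle, since \eqref{eq:ordinary-budget} only treats the first derivative. I would try the bound
\[
 \mu(u_2)\le M+2\cdot7253 .
\]
To prove it, take a uniformizer $t$ at $P$ and use the degree-two analogue of \eqref{eq:chain-three}:
\[
 D_t^{(2)}u=u'\,D_t^{(2)}x+u_2(D_tx)^2 .
\]
Solve for $u_2$ and use $v_P(D_tx)=7252$ (from \eqref{eq:dx}) together with \eqref{eq:ordinary-budget} for $u'$. Granting this budget, the pole order at $P$ of $u^{27}$ is $27M$. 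The pole orders of the other three terms are at most $M+26936$ and $39366+M+14506=M+53872$.

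\textbf{The four cases.}
\begin{enumerate}
\item If $M<2187$, then $\mu(G)<59049=3\cdot19683$. But $G$ has at least $59049$ zeros counted with multiplicity, so $G=0$ and $\T(u)=0$. If $M=2187$, then $\mu(G)\le59049$, so $\T(u)^3$ has no pole and is constant.
\item If $\T(u)=0$, then $u^{27}$ must cancel against the other terms. This forces $27M\le M+53872$, that is, $M\le2072$.
\item If $M>2187$, then $26M>53872$, so $27M$ is the unique maximal pole order and $\mu(G)=27M$. This gives $3\mu(\T(u))=27M-59049$, that is, $\mu(\T(u))=9(M-2187)$.
\item If $1458<M<2187$, then $\T(u)=0$ by (1), so $D^2u_2=u^{27}-u-Du'$. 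The right side has pole order exactly $27M$, because $26M>26936$. Since $D_xu_2=0$, \eqref{eq:three-basis} gives $u_2=r^3$ with $r\in\A$. Then $39366+3\mu(r)=27M$, so $\mu(r)=9(M-1458)$.
\end{enumerate}
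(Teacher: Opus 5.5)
\textbf{Overall.} Your argument follows the paper's proof: the local expansion at the $19683$ finite rational points, the computation $D_xG=0$ giving $G/D^3\in F^3$, and the pole comparisons using the bounds $27M$, $M$, $M+26936$, $M+53872$. The one real gap is the estimate $\mu(u_2)\le M+14506$. You flag it as the main obstacle and then only grant it.

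\textbf{Why your sketch does not give the estimate.} In
\[
 u_2=\frac{D_t^{(2)}u-u'\,D_t^{(2)}x}{(D_tx)^2}
\]
you also need a lower bound for $v_P(D_t^{(2)}x)$, and you do not supply one. The trivial bound $v_P(D_t^{(2)}x)\ge-731$ only gives
\[
 \mu(u_2)\le M+7253+731+14504=M+22488 .
\]
Then $\mu(D^2u_2)$ could be as large as $M+61854>59049$. With that, the arguments for (1), (2) and (3) no longer go through, and (4) relies on (1). To rescue your route you would need $v_P(D_t^{(2)}x)\ge7251$. This follows from $D_t^{(2)}=2D_tD_t$ together with $v_P(D_tx)=7252$.

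\textbf{The simpler fix.} The paper avoids the issue using an identity you already invoke in the derivative step. By \eqref{eq:hasse-composition}, $u_2=2u''$. Since $(dx)=7252P$, we have $u'\in\A$, so \eqref{eq:ordinary-budget} applies to $u'$ as well as to $u$:
\[
 \mu(u_2)=\mu(u'')\le\mu(u')+7253\le M+14506 .
\]
The same identity also justifies $u_2\in\A$, which you used implicitly when asserting $G\in\A$. With this inserted, the rest of your proof is complete and coincides with the paper's.
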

\begin{proof}
At a finite rational point use the same parameter $z$ as above and
write $u=c_0+c_1z+c_2z^2+O(z^3)$. Modulo $z^3$,
\[
 u^{27}=c_0,\quad Du'=-c_1z-2c_2z^2,\quad D^2u_2=c_2z^2.
\]
The constant term in the numerator of \eqref{eq:defect} is zero,
and its coefficients of $z,z^2$ are respectively
\[
 -c_1+c_1=0,\qquad -c_2+2c_2-c_2=0.
\]
These rational points are all the zeros of $D$, each simple.
Both $u'$ and $u_2=2u''$ are in $\A$, so the quotient is regular
away from $P$.

Let $B=u^{27}-u-Du'-D^2u_2$. The composition rule gives
$u''=2u_2$ and $(u_2)'=3u_3=0$. Since $D'=-1$,
\[
 B'=-u'+u'-2Du_2+2Du_2=0,
 \qquad (D^3)'=0.
\]
Thus $B/D^3\in\ker D_x=F^3$. Its unique cube root belongs to $\A$,
because regularity of a cube implies regularity of its root at
every place. Additivity follows by cubing both sides and using
injectivity of cubing in $F$.

By \eqref{eq:ordinary-budget},
\[
 \mu(u')\le M+7253,\qquad \mu(u_2)\le M+14506.
\]
The four numerator terms consequently have pole order at most
\[
 27M,\quad M,\quad M+26936,\quad M+53872,
\]
where $26936=19683+7253=26\cdot1036$ and
$53872=39366+14506=26\cdot2072$.
For $M\le2186$, all four bounds are strictly less than
$59049=\mu(D^3)$; at $M=2187$ they are at most $59049$.
The quotient is therefore regular everywhere, and vanishes at $P$
when $M<2187$. This proves (1).

If $M>2072$, then
\[
 27M>M+53872\ge M+26936\ge M.
\]
The term $u^{27}$ is the unique dominant term in $B$, so $B\ne0$.
This proves (2). When $M>2187$, it also gives
\[
 3\mu(\T(u))=27M-59049=27(M-2187),
\]
which proves (3).

For (4), assertion (1) gives
$D^2u_2=u^{27}-u-Du'$. Since $M>1458>1036$, the term $u^{27}$
strictly dominates the other two. Hence
\[
 \mu(u_2)=27M-39366=27(M-1458).
\]
Now $(u_2)'=0$, and taking a cube root proves the assertion.
\end{proof}

\begin{corollary}\label{cor:small-closure}
If $u\in\A$, $u_2=0$ and $\mu(u)<2187$, then $u\in\V$.
\end{corollary}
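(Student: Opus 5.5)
The plan is to reduce the statement to the first-order Frobenius identity and then repeat the pole-order count from the proof of Lemma~\ref{lem:gap}.

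\emph{Step 1 (first-order identity).} Let $u\in\A$ with $u_2=0$ and $M=\mu(u)<2187$. By Lemma~\ref{lem:defect}(1) we have $\T(u)=0$. Since $u_2=0$, equation \eqref{eq:defect} then reduces to
\[
 u^{27}-u=Du'.
\]

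\emph{Step 2 (trivial cases).} If $u$ is constant, it lies in $k\subset\V$, so assume $u$ is nonconstant. As noted before \eqref{eq:product-poles}, this gives $M>0$. If $u'=0$, the identity gives $u^{27}=u$. Then $u$ is algebraic over $k$, hence lies in $k$, because $k$ is the full constant field; this contradicts nonconstancy. Hence $u'\neq0$.

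\emph{Step 3 (pole count).} On the left, $u^{27}$ has a pole of order $27M$ at $P$ and $u$ has a pole of order $M<27M$. So the left side has pole order exactly $27M$. On the right, \eqref{eq:product-poles} and \eqref{eq:ordinary-budget} give
\[
 \mu(Du')=19683+\mu(u')\le19683+M+7253.
\]
Therefore $27M\le M+26936$, that is, $26M\le26\cdot1036$, so $M\le1036$. Since $u\in\A$ has no poles away from $P$, this means $u\in L(1036P)=\V$.

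I do not expect a real obstacle: the whole argument is the pole comparison already carried out in the proof of Lemma~\ref{lem:gap}, and the only ingredient beyond it is Lemma~\ref{lem:defect}(1). The one point needing care is the case $u'=0$, which is handled through the constant field as in Step 2.
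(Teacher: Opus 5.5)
Your proof is correct and follows the paper's argument: Lemma~\ref{lem:defect}(1) together with $u_2=0$ gives $u^{27}-u=Du'$, and the same pole comparison $27\mu(u)\le 19683+\mu(u)+7253$ yields $\mu(u)\le 1036$. Your separate treatment of the case $u'=0$, via the full constant field, is a harmless extra detail that the paper leaves implicit.
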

\begin{proof}
The defect lemma gives $u^{27}-u=Du'$. For nonconstant $u$, comparison
of poles gives $27\mu(u)\le19683+\mu(u)+7253$, hence
$\mu(u)\le1036$. Constants already belong to $\V$.
\end{proof}

\begin{lemma}\label{lem:coarse}
The second non-gap $t=m_2(P)$ satisfies $891\le t\le981$ and $3\mid t$.
\end{lemma}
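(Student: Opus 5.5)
The statement splits into the divisibility and upper bound, which are quick, and the lower bound, which needs a separate argument for one exceptional value. Let $u\in\V$ have pole order $t=m_2(P)$. Such a $u$ exists because $t\le982<1036$ by \eqref{eq:m2-upper}, and it is nonlinear because $t>m_1(P)=729$.

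First, divisibility and the upper bound. By Lemma~\ref{lem:gap}, the only positive non-gap below $1458$ not divisible by three is $1036$. Since $t\le982<1036$, we get $3\mid t$. As $982$ is not divisible by three, this sharpens \eqref{eq:m2-upper} to $t\le981$.

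Next, the lower bound, using the auxiliary function $h_u$ of \eqref{eq:small-sections}. It lies in $\A$ and is nonconstant, and by \eqref{eq:small-poles} its pole order $9(t-729)$ is a positive non-gap. The non-gaps below $1458$ are $0,729,t,1036$ together with possibly further multiples of three lying in $(t,1036)$. We compare $9(t-729)$ with these.
\begin{itemize}
\item It cannot equal $t$, since that would force $8t=9\cdot729$, which has no integer solution.
\item It cannot equal $1036$, since $9\nmid1036$.
\item It cannot lie strictly between $729$ and $t$, since $t$ is the next non-gap after $729$.
\item If it lay in $(t,1458)$, then $t<891$ and $9(t-729)<9(891-729)=1458$. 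So the only alternative to $t\ge891$ is $9(t-729)=729$, that is, $t=810$.
\end{itemize}
Hence either $t\ge891$, or $t=810$ and $\mu(h_u)=729$.

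The main obstacle is excluding $t=810$. In that case $L(729P)=\Span_k\{1,x\}$, so $h_u=\alpha x+\beta$ with $\alpha\ne0$, and $u'=\alpha^3x^3+\beta^3$. Since $D_x(x^4)=x^3$ and $D_x(x)=1$, the function $u-\alpha^3x^4-\beta^3x$ lies in $\ker D_x=F^3$. Its cube root $s$ lies in $\A$, and \eqref{eq:small-sections} identifies it as $s=w_u$, with $\mu(s)=9\cdot81+243=972$. Since $972<1458$, Lemma~\ref{lem:gap} gives $s\in\V$, so $s^{27}-s=Ds'$. We also have $s'=h_s^3$ with $\mu(h_s)=9(972-729)=2187$.

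I would substitute $u=\alpha^3x^4+\beta^3x+s^3$ into $u^{27}-u=Du'$. Using $D=x^{27}-x$ and the relation $s^{27}-s=Ds'$, this should collapse to an identity expressing $D^3h_s^9$, or a polynomial in $x$ and $D$, in terms of $x^{4}$-type terms. A comparison of leading pole orders, or of local expansions at a finite rational point in the style of Lemma~\ref{lem:defect}, should then give a contradiction.

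If that direct route fails, I would try two alternatives. The first is to show that $x^4$ would then satisfy \eqref{eq:Ree-membership}-type membership for some nontrivial element, contradicting $F\cap R=k(x)$. The second is to apply Lemma~\ref{lem:defect}(4) to $s^3$ or to $ux$, whose pole orders lie in $(1458,2187)$, and derive an impossible pole order for the resulting cube root.
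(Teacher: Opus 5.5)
Your divisibility and upper-bound argument is correct and is the paper's. The lower bound has two gaps. First, your case analysis never excludes that $\mu(h_u)=9(t-729)$ is itself a non-gap, necessarily divisible by three, lying in $(t,1036)$. Your fourth bullet only notes that this case forces $t<891$, and then jumps to $t=810$. Concretely, every $t\in\{822,825,\dots,843\}$ gives $9(t-729)\in[837,1026]$, and none of your bullets rules this out. The paper closes the case with the second auxiliary function. In that case $\mu(w_u)=9(t-729)+243<1279<1458$, so Lemma~\ref{lem:gap} gives $\mu(w_u)\le1036$, that is, $\mu(h_u)\le793$. Since $\mu(h_u)\ge729$ and there is no non-gap in $(729,t)\supseteq(729,810)$, this forces $\mu(h_u)=729$ and $t=810$.

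Second, and more seriously, the routes you propose for excluding $t=810$ (pole-order comparison, local expansions in the style of Lemma~\ref{lem:defect}, or Lemma~\ref{lem:defect}(4) applied to $s^3$ or $ux$) cannot work. All the identities involved also hold on the Ree curve itself. There $Y$ has pole order $810$ with $Y^{27}-Y=x^3D$, and $W$ has pole order $972$ with $W^{27}-W=x^9D$. Indeed, your substitution gives exactly $s^{27}-s=-\alpha x^9D$, hence $s'=-\alpha x^9$ and $\mu(h_s)=2187$, which is perfectly consistent. The contradiction must come from the hypothesis $F\cap R=k(x)$, and the missing step is immediate. The identity $u^{27}-u=Du'=D(\alpha^3x^3+\beta^3)$ has the form \eqref{eq:Ree-membership} with $\kappa_0=\beta^3$ and $\kappa_3=\alpha^3$, so $u\in F\cap R=k(x)$. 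Since $u\in\A$, it is a polynomial in $x$, so $729$ would divide its pole order $810$, which is false. Your own identity $s^{27}-s=-\alpha x^9D$ would work equally well through the same criterion (with $\kappa_9=-\alpha$), since $729\nmid972$. Your ``first alternative'' points in this direction, but you apply it to the wrong element: $x^4$ already lies in $k(x)$.
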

\begin{proof}
If $u\in\V\setminus\Span_k\{1,x\}$ has pole order $a$, then
$h_u$ is nonconstant and \eqref{eq:small-poles} gives
$9(a-729)\ge729$. Thus every such $a$ is at least $810$.

Suppose $729<a<891$ is a non-gap, and choose $u$ of pole order $a$.
Put $M=\mu(h_u)=9(a-729)<1458$. Lemma~\ref{lem:gap} gives
$M\le1036$. Hence $\mu(w_u)=M+243\le1279<1458$, and the same
lemma gives $M+243\le1036$. Thus $729\le M\le793$.
If $M>729$, the function $h_u$ is a nonlinear section of $\V$, so
its pole order is at least $810$, a contradiction. Therefore
$M=729$ and $a=810$.

In this case $h_u=\alpha x+\delta$, where $\alpha\in k^\times$
and $\delta\in k$, and
\[
 u^{27}-u=D(\alpha^3x^3+\delta^3).
\]
By \eqref{eq:Ree-membership}, $u\in k(x)$, contrary to its pole order
$810$. This proves $t\ge891$. Equation \eqref{eq:m2-upper} and
Lemma~\ref{lem:gap} give $t\le982$ and $3\mid t$, hence $t\le981$.
\end{proof}

\subsection{Reduction of the second non-gap}
Choose $u\in\A$ with pole order $t=m_2(P)$, and put
\begin{equation}\label{eq:normal-notation}
 u'=h^3,\qquad w=u^9-x^9h,\qquad
 h=a^3+xb^3+x^2r^3,\qquad c=\T(h),\quad S=\T(h').
\end{equation}
The $3$-basis decomposition gives
\begin{equation}\label{eq:abr-cubes}
 r^3=h_2=2h'',\qquad b^3=h'+xr^3,\qquad
 a^3=h-xb^3-x^2r^3.
\end{equation}
Thus $a,b,r\in\A$: all right sides are regular away from $P$,
and so are their cube roots.

\begin{lemma}\label{lem:reduce972}
In \eqref{eq:normal-notation}, $S'=0$, $c\in k$ and $t\le972$.
Moreover,
\begin{equation}\label{eq:normal9}
 h^9=a+x^9b+x^{18}r+cD,
\end{equation}
and
\begin{equation}\label{eq:S}
 DS=(h')^9-b+x^9r.
\end{equation}
\end{lemma}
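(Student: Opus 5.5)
The plan is to derive \eqref{eq:normal9} and \eqref{eq:S} by a purely algebraic computation from the definition of $\T$. Only the conditions $c\in k$, $t\le972$ and $S'=0$ need pole-order arguments.

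\emph{The two identities.} Since $h$ is written in the $3$-basis \eqref{eq:abr-cubes}, we have $h'=b^3-xr^3$ and $h_2=r^3$. Hence
\[
 h+Dh'+D^2h_2=a^3+(x+D)b^3+(x^2-xD+D^2)r^3 .
\]
In characteristic three, $x+D=x^{27}$ and $x^2-xD+D^2=(x+D)^2-3xD=x^{54}$. The defining relation \eqref{eq:defect} for $c=\T(h)$ therefore reads
\[
 h^{27}-a^3-x^{27}b^3-x^{54}r^3=D^3c^3 .
\]
This is a cube of an identity in $\A$, and injectivity of cubing gives \eqref{eq:normal9}. The same computation applies to $h'$. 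Its $3$-basis decomposition is $h'=b^3+x(-r)^3+x^2\cdot0$, because $D_x^{(2)}h'=0$ by the composition rule \eqref{eq:hasse-composition}. It gives $(h')^9=b-x^9r+SD$, which is \eqref{eq:S}. Neither identity uses any bound on $t$.

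\emph{The bounds $c\in k$ and $t\le972$.} By Lemma~\ref{lem:coarse}, $891\le t\le981$, so $t<1036$ and $u\in\V$. Then \eqref{eq:small-poles} gives $\mu(h)=9(t-729)\in[1458,2268]$. If $t\le972$, then $\mu(h)\le2187$ and Lemma~\ref{lem:defect}(1) gives $c\in k$.

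It remains to exclude $t\in\{975,978,981\}$. In that range Lemma~\ref{lem:defect}(3) gives $\mu(c)=9(\mu(h)-2187)=81(t-972)\in\{243,486,729\}$, and $c\in\A$. I would first compare pole orders in \eqref{eq:normal9}. On the left, $\mu(h^9)=81(t-729)$, and on the right $\mu(cD)=19683+81(t-972)$, which is the same number. Next I would bound $\mu(a),\mu(b),\mu(r)$ through \eqref{eq:abr-cubes} and \eqref{eq:ordinary-budget}. The aim is to show that $c$ is a nonconstant element of $\A$ with pole order below $810$. That is forbidden by Lemma~\ref{lem:coarse}, since the only non-gaps below $810$ are $0$ and $729$. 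Alternatively, it would produce a nonlinear section of $\V$ of pole order $243$ or $486$, which would be a gap. If the pole count is not sharp enough, I would instead feed $c$ back into the Frobenius identity. The goal there is an $\eta$ satisfying \eqref{eq:Ree-membership}, which would contradict $F\cap R=k(x)$ via pole orders not divisible by $729$.

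\emph{The identity $S'=0$.} This is the step I expect to be the main obstacle. Differentiating \eqref{eq:S} gives $DS'-S=-b'+x^9r'$. Differentiating \eqref{eq:normal9}, now with $c\in k$ and $D'=-1$, gives $a'+x^9b'+x^{18}r'=-c$. I would compute $a',b',r'$ from \eqref{eq:abr-cubes} through the third Hasse derivatives, using \eqref{eq:chain-three} if necessary, and substitute. The aim is to show that $DS'$ is a function in $\A$ whose pole order, bounded by \eqref{eq:ordinary-budget} and the estimates above, is less than $\mu(D)=19683$. If $S'\neq0$, then $\mu(DS')=19683+\mu(S')\ge19683$, so this bound forces $S'=0$. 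As a fallback, I would apply Lemma~\ref{lem:defect}(2)--(3) to $h'$ to pin down $\mu(S)$, and check directly that $D_xS$ vanishes by comparing the $x$-, $x^9$- and $x^{18}$-components in the $3$-basis.
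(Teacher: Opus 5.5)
Your derivation of \eqref{eq:normal9} and \eqref{eq:S} is correct and is the paper's computation. There is one sign slip: differentiating \eqref{eq:normal9} with $c\in k$ gives $a'+x^9b'+x^{18}r'=c$. Your disposal of $t\in\{975,978\}$ is also fine, since then $\mu(c)\in\{243,486\}$ lies below $m_1(P)=729$.

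The first gap is the case $t=981$. There $\mu(c)=729$, so $c=\alpha x+\delta$ with $\alpha\ne0$. Since $729$ is a non-gap, ``$c$ nonconstant of pole order below $810$'' is not a contradiction. The pole comparison in \eqref{eq:normal9} is balanced, and you name no $\eta$ for \eqref{eq:Ree-membership}. The missing idea is to differentiate \eqref{eq:normal9} \emph{before} knowing $c\in k$. With $c=\alpha x+\delta$ this gives $a'+x^9b'+x^{18}r'=c-\alpha D$. Then $\alpha\ne0$ is excluded because the left side has pole order at most $17542<19683=\mu(\alpha D)$. That estimate needs $\mu(r')\le4420$, which comes from $x^9r'=b'-S$, so it needs $S'=0$ first. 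Your order must therefore be reversed, as in the paper:
\begin{enumerate}
\item Use the same differentiated identity, with the $\alpha D$ term still present, to get $\mu(x^{18}r')\le19683$, hence $\mu(r')\le6561$.
\item Then $DS'=x^9r'+S-b'$ has pole order at most $13122<19683$, forcing $S'=0$.
\item Only then do $\alpha=0$, $c\in k$, and $t\le972$ follow, the last by Lemma~\ref{lem:defect}(3).
\end{enumerate}

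The second gap is that your $S'$-step cannot be closed with \eqref{eq:ordinary-budget} alone. Since $S$ itself occurs in $DS'=x^9r'+S-b'$, you need $\mu(S)<19683$, that is, $\mu(h')<4374$ by Lemma~\ref{lem:defect}(3). The ordinary budget gives only $\mu(h')\le\mu(h)+7253\le9521$, which allows $\mu(S)$ up to $66006$. Your fallback via Lemma~\ref{lem:defect}(2)--(3) needs the same missing bound. The paper gets it from $w=u^9-x^9h$. Since $dw=-x^9h'\,dx$ and $3\mid\mu(w)=\mu(h)+243$, one has $v_P(dw)\ge-\mu(w)$, whence $\mu(h')\le\mu(w)+7252-6561\le3202$. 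This yields $\mu(S)\le9135$. Through \eqref{eq:abr-cubes}, it also yields the bounds $\mu(a')\le11224$ and $\mu(b')\le10981$ used above. Third Hasse derivatives and \eqref{eq:chain-three} play no role in this lemma.
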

\begin{proof}
We first record the bounds before $t\le972$ has been established.
By Lemma~\ref{lem:coarse} and \eqref{eq:small-poles},
\[
 \mu(h)\le9(981-729)=2268,\qquad
 \mu(w)\le2268+243=2511.
\]
The pole order of $w$ is divisible by three. If $h'\ne0$, the identity
$dw=-x^9h'\,dx$ gives
\[
 7252-6561-\mu(h')=v_P(dw)\ge-\mu(w),
\]
and therefore $\mu(h')\le2511+7252-6561=3202$. This bound also
holds when $h'=0$.
Equations \eqref{eq:abr-cubes} and \eqref{eq:ordinary-budget} now give
\begin{align}
 \mu(r)&\le\left\lfloor\frac{3202+7253}{3}\right\rfloor=3485,
 \label{eq:rough-r}\\
 \mu(b)&\le\left\lfloor\frac{\max\{3202,729+3\cdot3485\}}3
                    \right\rfloor=3728,\notag\\
 \mu(a)&\le\left\lfloor\frac{
 \max\{2268,729+3\cdot3728,1458+3\cdot3485\}}3
                    \right\rfloor=3971.\notag
\end{align}
Consequently
\begin{equation}\label{eq:rough-abprime}
 \mu(a')\le3971+7253=11224,\qquad
 \mu(b')\le3728+7253=10981.
\end{equation}
The three cases in Lemma~\ref{lem:defect}(1),(3) give
\[
 \mu(c)\le9(2268-2187)=729,\qquad
 \mu(S)\le9(3202-2187)=9135.
\]
Since $L(729P)=\Span_k\{1,x\}$, write $c=\alpha x+\delta$ with
$\alpha,\delta\in k$.

We next derive \eqref{eq:normal9} and \eqref{eq:S}. Substitution of
$h=a^3+xb^3+x^2r^3$ into the definition of $c$ yields
\begin{align*}
 h^{27}
 &=h+Dh'+D^2r^3+c^3D^3\\
 &=a^3+(x+D)b^3+(x^2-xD+D^2)r^3+c^3D^3\\
 &=a^3+x^{27}b^3+x^{54}r^3+c^3D^3.
\end{align*}
Here $x+D=x^{27}$ and $x^2-xD+D^2=x^{54}$ in characteristic
three. Taking a cube root gives \eqref{eq:normal9}.
Likewise $h'=b^3-xr^3$, $h''=-r^3$ and $(h')_2=0$, so
\[
 D^3S^3=(h')^{27}-h'-Dh''=(h')^{27}-b^3+x^{27}r^3.
\]
Taking a cube root gives \eqref{eq:S}.

Differentiating these two identities gives
\begin{equation}\label{eq:roughder}
 b'=x^9r'+S-DS',\qquad
 a'+x^9b'+x^{18}r'=c-\alpha D.
\end{equation}
The second identity, together with \eqref{eq:rough-abprime}, implies
\[
 \mu(x^{18}r')\le
 \max\{11224,6561+10981,729,19683\}=19683.
\]
Hence $\mu(r')\le19683-13122=6561$. Returning to the first identity,
\[
 \mu(DS')\le\max\{6561+6561,9135,10981\}=13122<19683.
\]
If $S'\ne0$, \eqref{eq:product-poles} would give
$\mu(DS')=19683+\mu(S')\ge19683$. Thus $S'=0$.

Now $x^9r'=b'-S$, and therefore
\[
 \mu(r')\le\max\{10981,9135\}-6561=4420.
\]
It follows that the left side of the second identity in
\eqref{eq:roughder} has pole order at most
\[
 \max\{11224,6561+10981,13122+4420\}=17542.
\]
If $\alpha\ne0$, its right side $c-\alpha D$ has pole order
$19683$, a contradiction. Hence $\alpha=0$ and $c\in k$.
Finally, if $\mu(h)>2187$, Lemma~\ref{lem:defect}(3) would make
$c$ nonconstant. Thus $9(t-729)=\mu(h)\le2187$, proving $t\le972$.
\end{proof}

\subsection{The third Hasse derivative}
\begin{lemma}\label{lem:two-small-functions}
Put $v=r'$ and $\xi=S-x^9v$. Then $v,\xi\in\V$,
\begin{equation}\label{eq:xiprime}
 \xi'=-x^9v',
\end{equation}
and $v,\xi$ are constant.
\end{lemma}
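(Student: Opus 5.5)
The identity \eqref{eq:xiprime} is formal, membership in $\V$ comes from Corollary~\ref{cor:small-closure}, and constancy comes from the structure of $\V$ given by \eqref{eq:small-sections}, \eqref{eq:small-poles}, Lemma~\ref{lem:coarse} and \eqref{eq:Ree-membership}.

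\emph{The identity.} By Lemma~\ref{lem:reduce972}, $S'=0$. Since $(x^9)'=0$, differentiating $\xi=S-x^9v$ gives $\xi'=-x^9v'$, which is \eqref{eq:xiprime}. Both $v$ and $\xi$ lie in $\A$, because $r,S\in\A$ and $D_x$ preserves $\A$.

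\emph{Vanishing of the second derivatives.} By the composition rule \eqref{eq:hasse-composition},
\[
 v_2=D_x^{(2)}D_xr=3D_x^{(3)}r=0 .
\]
Since $S'=0$, we have $S\in F^3$ by \eqref{eq:three-basis}. The Hasse derivatives of a cube vanish in orders prime to three, so $S_2=0$ and hence $\xi_2=S_2-x^9v_2=0$.

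\emph{Pole bounds.} To apply Corollary~\ref{cor:small-closure} we still need $\mu(v),\mu(\xi)<2187$. This is the main obstacle, and the paper's heading indicates it is where the third Hasse derivative enters. I would first redo the estimates in the proof of Lemma~\ref{lem:reduce972} with $t\le972$, so $\mu(h)\le2187$ and $\mu(w)\le2430$; but these alone give only $\mu(v)\lesssim4400$. So I would apply $D_x^{(3)}$ to the normal form \eqref{eq:normal9}. Both $h^9$ and $cD$ are killed, which gives
\[
 a_3+x^9b_3+x^{18}r_3=0 .
\]
Note also that $v^3=D_x^{(3)}(r^3)=D_x^{(3)}h_2=10h_5=h_5$. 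Next I would control $D_x^{(3)}$ at $P$ through \eqref{eq:chain-three}, with a uniformizer $z$ at $P$ and $v_P(D_zx)=7252$ from \eqref{eq:dx}. Combining this with \eqref{eq:roughder} and \eqref{eq:S}, I would compare the pieces graded by $1,x^9,x^{18}$, playing the pole order of $x^9$ against that of $D$ as in Lemma~\ref{lem:reduce972}. The aim is $\mu(v)<2187$ and then $\mu(\xi)<2187$; Corollary~\ref{cor:small-closure} then gives $v,\xi\in\V$.

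\emph{Constancy.} Suppose first that $v$ is nonlinear, with pole order $a_v$. Lemma~\ref{lem:coarse} gives $a_v\ge891$, and \eqref{eq:small-sections} gives $v'=h_v^3$. Then $\xi'=-x^9h_v^3$ is nonzero, so $\xi$ is nonlinear with $h_\xi=-x^3h_v$. Using \eqref{eq:small-poles},
\[
 9(a_\xi-729)=2187+9(a_v-729),
\]
so $a_\xi=a_v+243\ge1134>1036$, which contradicts $\xi\in\V$. Hence $v=\alpha x+\delta$ with $\alpha,\delta\in k$, and $\xi'=-\alpha x^9$.

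If $\alpha\ne0$, then $\xi$ is nonlinear with $h_\xi^3=-\alpha x^9$, so $h_\xi=-\alpha^{1/3}x^3$. Then \eqref{eq:small-sections} gives $\xi^{27}-\xi=D\kappa x^9$ with $\kappa=-\alpha^{1/3\cdot 9}\in k$. Also \eqref{eq:small-poles} forces $\mu(\xi)=972$. Now \eqref{eq:Ree-membership} puts $\xi$ in $k(x)$, so its pole order must be divisible by $729$, which $972$ is not. Therefore $\alpha=0$, $v$ is constant, and $\xi'=0$. An element of $\V$ with zero derivative is linear with vanishing $x$-coefficient, so $\xi$ is constant as well.
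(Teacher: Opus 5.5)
Your derivation of \eqref{eq:xiprime}, of $v_2=\xi_2=0$, and your constancy argument once $v,\xi\in\V$ is known are correct and agree with the paper. One harmless slip: $\xi^{27}-\xi=D\xi'=-\alpha x^9D$, so $\kappa=-\alpha$.

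The gap is the step you flag yourself. The bounds $\mu(v)<2187$ and $\mu(\xi)<2187$ are never proved, so Corollary~\ref{cor:small-closure} is not available and everything after it is conditional. Your proposed route does not supply them. Applying $D_x^{(3)}$ to \eqref{eq:normal9} gives $a_3+x^9b_3+x^{18}r_3=0$. This relates third derivatives of $a,b,r$, which you have no way to control at $P$, and it does not involve $\xi$. ``Comparing graded pieces'' is not an argument.

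\emph{The bound for $v$.} No third derivative is needed here. Your estimate of about $4400$ comes from bounding $x^9b'$ directly. Instead, substitute $b'=x^9v+S$ (the first identity of \eqref{eq:roughder} with $S'=0$) into the second identity, which becomes $a'=c+x^{18}v-x^9S$. With $t\le972$ the bounds update to $\mu(h')\le3121$, $\mu(r)\le3458$, $\mu(b)\le3701$, $\mu(a)\le3944$. Hence $\mu(a')\le11197$ and $\mu(S)\le9(3121-2187)=8406$. This gives $\mu(v)\le6561+8406-13122=1845$, and the corollary then yields $\mu(v)\le1036$.

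\emph{The bound for $\xi$.} The missing idea is an identity tying $\xi$ to a function whose pole at $P$ is genuinely small.
\begin{itemize}
\item Apply $D_x^{(3)}$ to $h=a^3+xb^3+x^2r^3$, not to \eqref{eq:normal9}. This gives $h_3=(a')^3+x(b')^3+x^2v^3$.
\item Subtract the cube of $a'+x^9b'+x^{18}v=c$ and insert $b'=x^9v+S$. This yields $h_3=c^3-D(\xi^3+xv^3)$.
\item Use \eqref{eq:chain-three} at $P$ with $\mu(h)\le2187$, $\mu(h')\le3121$, $v_P(D_zx)=7252$, $v_P(D_z^{(2)}x)\ge7251$ and $v_P(D_z^{(3)}x)\ge-732$. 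This gives $\mu(h_3)\le25609$.
\item Since $\mu(Dxv^3)\le19683+729+3\cdot1036=23520$, you get $3\mu(\xi)\le25609-19683$, that is, $\mu(\xi)\le1975<2187$.
\end{itemize}
With these two bounds, the corollary gives $v,\xi\in\V$, and your constancy argument then goes through unchanged.
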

\begin{proof}
We first update the estimates using $t\le972$:
\begin{align}
 \mu(h)&\le9(972-729)=2187,\qquad \mu(w)\le2187+243=2430,
 \notag\\
 \mu(h')&\le2430+7252-6561=3121,\notag\\
 \mu(r)&\le\left\lfloor\frac{3121+7253}{3}\right\rfloor=3458,
 \label{eq:sharp-poles}\\
 \mu(b)&\le\frac{\max\{3121,729+3\cdot3458\}}3=3701,\notag\\
 \mu(a)&\le\frac{\max\{2187,729+3\cdot3701,1458+3\cdot3458\}}3
 =3944.\notag
\end{align}
In particular,
\[
 \mu(a')\le3944+7253=11197,\qquad
 \mu(S)\le9(3121-2187)=8406.
\]
Since $S'=0$ and $c\in k$, \eqref{eq:roughder} becomes
\begin{equation}\label{eq:sharpder}
 b'=x^9v+S,\qquad a'=c+x^{18}v-x^9S.
\end{equation}
The second identity gives
\[
 \mu(v)\le\max\{11197,6561+8406\}-13122=1845.
\]
Moreover $v_2=D_x^{(2)}D_xr=3D_x^{(3)}r=0$.
Since $1845<2187$, Corollary~\ref{cor:small-closure} gives
\begin{equation}\label{eq:v-sharp}
 v\in\V,\qquad \mu(v)\le1036.
\end{equation}

We shall use the third Hasse derivative $h_3$. Applying $D_x^{(3)}$
to the $3$-basis decomposition of $h$ gives
\[
 h_3=(a')^3+x(b')^3+x^2v^3.
\]
Cubing $a'+x^9b'+x^{18}v=c$, and subtracting, yields
\[
 h_3=c^3-D(b')^3+(Dx-D^2)v^3,
\]
since $x^2-x^{54}=Dx-D^2$. Substitution of $b'=x^9v+S$ gives
\begin{align}
 h_3
 &=c^3-DS^3+(-Dx^{27}+Dx-D^2)v^3\notag\\
 &=c^3-DS^3+D^2v^3.\label{eq:h3}
\end{align}
In the last step, $-Dx^{27}+Dx=-D^2$ and $-2=1$ in $k$.

To bound $h_3$, choose a uniformizer $z$ at $P$ and write
$x=\sum_{n\ge-729}a_nz^n$. Since $v_P(D_zx)=7252$, every exponent
$n<7253$ with $a_n\ne0$ is divisible by three. For such integers,
including negative integers, $\binom n2\equiv0\pmod3$.
It follows that
\[
 v_P(D_zx)=7252,\quad v_P(D_z^{(2)}x)\ge7251,\quad
 v_P(D_z^{(3)}x)\ge-732.
\]
Also $v_P(D_z^{(3)}h)\ge-2187-3=-2190$.
The change-of-parameter formula \eqref{eq:chain-three} gives
\[
 h_3(D_zx)^3=D_z^{(3)}h-h'D_z^{(3)}x
             -2r^3(D_zx)(D_z^{(2)}x).
\]
The three numerator terms have respective lower valuation bounds
\begin{align*}
 v_P(D_z^{(3)}h)&\ge-2190,\\
 v_P(h'D_z^{(3)}x)&\ge-3121-732=-3853,\\
 v_P(2r^3(D_zx)(D_z^{(2)}x))
   &\ge-3\cdot3458+7252+7251=4129.
\end{align*}
Since $v_P((D_zx)^3)=3\cdot7252=21756$, we obtain
\begin{equation}\label{eq:h3-bound}
 \mu(h_3)\le21756-\min\{-2190,-3853,4129\}=25609.
\end{equation}

For $\xi=S-x^9v$, equation \eqref{eq:h3} reads
\[
 h_3=c^3-D(\xi^3+xv^3),\qquad
 D\xi^3=c^3-h_3-Dxv^3.
\]
Using \eqref{eq:v-sharp} and \eqref{eq:h3-bound}, we get
\begin{align*}
 3\mu(\xi)
 &\le\max\{25609,19683+729+3\cdot1036\}-19683\\
 &=\max\{5926,3837\}=5926.
\end{align*}
Hence $\mu(\xi)\le1975$. Because $S'=0$, the function $S$ is a
cube and $S_2=0$. Also $(x^9v)_2=x^9v_2=0$. Thus $\xi_2=0$, and
Corollary~\ref{cor:small-closure} gives $\xi\in\V$.
Differentiation yields \eqref{eq:xiprime}.

Suppose $v\notin\Span_k\{1,x\}$ has pole order $\tau$.
Then $\tau\ge t\ge891$ and the Frobenius identity gives
\[
 \mu(v')=27(\tau-729),\qquad
 \mu(\xi')=6561+27(\tau-729).
\]
The latter number is positive. Applying the same identity to $\xi$
therefore gives $27(\mu(\xi)-729)=6561+27(\tau-729)$, whence
\[
 \mu(\xi)=\tau+243\ge1134>1036,
\]
a contradiction. Thus $v=\alpha x+\beta$ with $\alpha,\beta\in k$.
If $\alpha\ne0$, then
\[
 \xi^{27}-\xi=-\alpha x^9D,
 \qquad \mu(\xi)=\frac{19683+6561}{27}=972.
\]
By \eqref{eq:Ree-membership}, $\xi\in k(x)$, contradicting this pole
order. Hence $v=\beta\in k$. Now \eqref{eq:xiprime} gives $\xi'=0$,
and $\xi^{27}-\xi=D\xi'=0$ implies $\xi\in k$.
\end{proof}

For reference, the bounds used in the normal form are collected below.
The first column applies before Lemma~\ref{lem:reduce972}; the second
uses its conclusion $t\le972$. Each entry is an upper bound for $\mu$.
\[
\begin{array}{c|r|r|l}
 \text{function}&t\le981&t\le972&\text{source}\\\hline
 h&2268&2187&\eqref{eq:small-poles}\\
 w&2511&2430&\eqref{eq:small-poles}\\
 h'&3202&3121&dw=-x^9h'\,dx\\
 r&3485&3458&\eqref{eq:abr-cubes},\ \eqref{eq:ordinary-budget}\\
 b&3728&3701&\eqref{eq:abr-cubes}\\
 a&3971&3944&\eqref{eq:abr-cubes}\\
 S&9135&8406&\text{Lemma~\ref{lem:defect}}\\
 h_3&\text{--}&25609&\eqref{eq:h3-bound}
\end{array}
\]

\subsection{The differential normal form}
\begin{lemma}\label{lem:normal}
There are $\beta,\varepsilon\in k$ and $\rho,\sigma,\theta\in\A$ such that
\begin{align}
 r&=\beta x+\rho^3,&
 b&=\varepsilon x-\beta x^{10}+\sigma^3,
 \label{eq:rb}\\
 \theta^3&=\sigma-x^3\rho,&
 \theta^{27}-\theta&=D(\beta^3x^3-\rho^9).
 \label{eq:theta}
\end{align}
Their pole orders satisfy
\begin{equation}\label{eq:rho-sigma-bounds}
 \mu(\rho)\le1152,\qquad \mu(\sigma)\le2430.
\end{equation}
\end{lemma}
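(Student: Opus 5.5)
The plan is to integrate the conclusions of Lemma~\ref{lem:two-small-functions} twice, using $\ker D_x=F^3$, and then to feed the result into \eqref{eq:S}.

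\emph{Step 1: the forms \eqref{eq:rb}.} By Lemma~\ref{lem:two-small-functions}, $v=r'=\beta\in k$ and $\xi=S-x^9v\in k$, so $S=\xi+\beta x^9$. Then $(r-\beta x)'=0$. By \eqref{eq:three-basis}, $r-\beta x=\rho^3$ for some $\rho\in F$. Since $r\in\A$ and a cube root of a function regular off $P$ is regular off $P$, we get $\rho\in\A$.

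Next, \eqref{eq:sharpder} gives $b'=x^9v+S=2\beta x^9+\xi=-\beta x^9+\xi$. Put $\varepsilon=\xi$. Since $(x^{10})'=10x^9=x^9$, we have $(b-\varepsilon x+\beta x^{10})'=0$. The same argument then gives $b=\varepsilon x-\beta x^{10}+\sigma^3$ with $\sigma\in\A$.

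\emph{Step 2: the bounds \eqref{eq:rho-sigma-bounds}.} These come from the table of sharp bounds \eqref{eq:sharp-poles}. From $\rho^3=r-\beta x$ we get $3\mu(\rho)\le\max\{3458,729\}$, so $\mu(\rho)\le1152$. From $\sigma^3=b-\varepsilon x+\beta x^{10}$ we get $3\mu(\sigma)\le\max\{3701,7290\}$, so $\mu(\sigma)\le2430$.

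\emph{Step 3: the function $\theta$.} Substitute these forms into \eqref{eq:S}, written as
\[
 D(\xi+\beta x^9)=b^{27}-b-x^9(r^{27}-r),
\]
using $h'=b^3-xr^3$. Expand with $\varepsilon,\beta\in k=\F_{27}$, so that $\varepsilon^{27}=\varepsilon$ and $\beta^{27}=\beta$. The $\varepsilon$-terms contribute $\varepsilon D$, which cancels $\xi D$. The $\beta$-terms combine to
\[
 \beta(x^{270}-x^{36})=\beta x^{27}D^9 .
\]
What remains is the relation
\[
 \sigma^{81}-x^9\rho^{81}-(\sigma^3-x^9\rho^3)=\beta x^{27}D^9 .
\]
Put $X=\sigma-x^3\rho$. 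Then $X^3=\sigma^3-x^9\rho^3$ and $\sigma^{27}-x^3\rho^{27}=X^{27}+(x^{81}-x^3)\rho^{27}$. Taking a cube root of the relation gives
\[
 X^{27}-X=\beta^{1/3}x^9D^3-(x^{81}-x^3)\rho^{27}.
\]
The right side is a cube, and $X^{27}$ is a cube. Hence $X$ is a cube, $X=\theta^3$ with $\theta\in\A$.

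Taking a cube root once more gives
\[
 \theta^{27}-\theta=\beta^{1/9}x^3D-(x^{27}-x)\rho^9 .
\]
Since $\beta\in\F_{27}$, we have $\beta^{1/9}=\beta^3$, and this is exactly \eqref{eq:theta}.

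\emph{Main obstacle.} The delicate point is the bookkeeping in Step~3. One must check that the $\beta$-terms collapse to the cube $\beta x^{27}D^9$; this uses $-2=1$ and $x^{270}-x^{36}=x^{27}(x^{243}-x^9)$. One must also correctly reorganize $\sigma^{81}-x^9\rho^{81}$ in terms of $X$. A single sign or exponent slip there would prevent $X$ from being a cube.
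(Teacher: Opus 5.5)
Your proof is correct and follows the paper's argument essentially step for step: you integrate $r'=\beta$ and $b'=\varepsilon-\beta x^9$ using $\ker D_x=F^3$, read the bounds off \eqref{eq:sharp-poles}, and substitute into \eqref{eq:S}. The only cosmetic difference is how you show that $\sigma-x^3\rho$ is a cube: you exhibit $X^{27}-X$ as a cube, whereas the paper first takes a cube root and then differentiates to get $\sigma'=x^3\rho'$; the two routes are equivalent.
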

\begin{proof}
By Lemma~\ref{lem:two-small-functions}, write $v=\beta$ and
$\xi=\varepsilon$, with $\beta,\varepsilon\in k$.
Equations \eqref{eq:sharpder} and $\xi=S-x^9v$ give
\[
 r'=\beta,\qquad S=\varepsilon+\beta x^9,
 \qquad b'=\varepsilon-\beta x^9.
\]
Thus $r-\beta x$ and $b-\varepsilon x+\beta x^{10}$ have derivative
zero. Their cube roots $\rho,\sigma$ belong to $\A$, proving
\eqref{eq:rb}. The updated bounds \eqref{eq:sharp-poles} give
\[
 \mu(\rho)\le\left\lfloor\frac{\max\{3458,729\}}3\right\rfloor=1152,
 \qquad
 \mu(\sigma)\le\frac{\max\{3701,729,7290\}}3=2430.
\]

Since $h'=b^3-xr^3$, equation \eqref{eq:S} is equivalent to
\[
 b^{27}-b=x^9(r^{27}-r)+DS.
\]
Substitute \eqref{eq:rb} and $S=\varepsilon+\beta x^9$. The two
sides become respectively
\[
 \varepsilon D-\beta(x^{270}-x^{10})+(\sigma^{27}-\sigma)^3
\]
and
\[
 \varepsilon D-\beta x^9D+x^9(\rho^{27}-\rho)^3.
\]
Consequently
\[
 (\sigma^{27}-\sigma)^3
 =x^9(\rho^{27}-\rho)^3+\beta(x^{270}-x^{36}).
\]
In $k=\F_{27}$, the cube root of $\beta$ is $\beta^9$; hence
\begin{equation}\label{eq:sigma-frob}
 \sigma^{27}-\sigma
 =x^3(\rho^{27}-\rho)+\beta^9(x^{90}-x^{12}).
\end{equation}
Differentiation gives $\sigma'=x^3\rho'$, so
$\sigma-x^3\rho=\theta^3$ for a unique $\theta\in\A$.
Finally,
\begin{align*}
 (\theta^{27}-\theta)^3
 &=\sigma^{27}-\sigma-x^{81}\rho^{27}+x^3\rho\\
 &=-(x^{81}-x^3)\rho^{27}+\beta^9(x^{90}-x^{12})\\
 &=-D^3\rho^{27}+\beta^9(x^3D)^3\\
 &=\bigl(D(\beta^3x^3-\rho^9)\bigr)^3.
\end{align*}
Injectivity of cubing proves the second identity in \eqref{eq:theta}.
\end{proof}

\subsection{Completion of the proof}
\begin{proposition}\label{prop:q3}
Theorem~\ref{thm:main} holds for $q_0=3$.
\end{proposition}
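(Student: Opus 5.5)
The plan is to use the differential normal form of Lemma~\ref{lem:normal} to produce an element of $\A$ that lies in $F\cap R=k(x)$ by \eqref{eq:Ree-membership} but has pole order not divisible by $729$. This contradicts Proposition~\ref{prop:q3-intersection}. The natural candidate is built from $\theta$, and ultimately from the section $u$ of pole order $t=m_2(P)$ itself. By Lemmas~\ref{lem:coarse} and~\ref{lem:reduce972} we have $891\le t\le972$, so $729\nmid t$.

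\emph{Step 1: place $\theta$ in $\V$.} From $\theta^3=\sigma-x^3\rho$ and \eqref{eq:rho-sigma-bounds} we get
\[
3\mu(\theta)\le\max\{2430,\,2187+1152\}=3339,
\]
so $\mu(\theta)\le1113<1458$. Lemma~\ref{lem:gap} then forces $\mu(\theta)\le1036$. The same lemma's argument ($G=0$ for pole order below $1458$) gives $\theta^{27}-\theta=D\theta'$. Comparing with \eqref{eq:theta} yields
\[
\theta'=\beta^3x^3-\rho^9 .
\]
Moreover $\theta_2=0$, because $\theta'$ is a cube. Hence $\theta\in\V$ by Corollary~\ref{cor:small-closure}.

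\emph{Step 2: show $\theta$ is linear.} Suppose $\theta\notin\Span_k\{1,x\}$. Then \eqref{eq:small-sections} gives $\theta'=h_\theta^3$, so
\[
h_\theta=\beta x-\rho^3,\qquad \mu(h_\theta)=9(\mu(\theta)-729)\ge9(891-729)=1458,
\]
using $\mu(\theta)\ge t\ge891$. This makes $\mu(\rho)=\mu(h_\theta)/3\ge486$. I would then compare the exact value $\mu(\rho)=3(\mu(\theta)-729)$ with the constraint coming from $\sigma=\theta^3+x^3\rho$ and from \eqref{eq:sigma-frob}. Pole orders of the dominant terms, and their divisibility by $3$, should be incompatible, much as in the last paragraph of Lemma~\ref{lem:two-small-functions}. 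In particular $\mu(\theta)\in H(P)$ and $\mu(\rho)\in H(P)$ must both respect the gap $(1036,1458)$ and the non-gap restrictions of Lemma~\ref{lem:coarse}.

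\emph{Step 3: the linear case.} Once $\theta=\gamma x+\delta$, we get $\rho^9=\beta^3x^3-\gamma^3$, so $\rho\in k$ (or $\rho$ is linear, and then again constant by Lemma~\ref{lem:coarse}). This determines $\sigma$, hence $r$ and $b$ by \eqref{eq:rb}, as explicit polynomials in $x$. Then \eqref{eq:normal9} and $a'=c+x^{18}v-x^9S$ determine $a$ up to cubes. Unwinding $h=a^3+xb^3+x^2r^3$ and $u'=h^3$, I expect $u^{27}-u=Du'$ to take the form \eqref{eq:Ree-membership} after subtracting a polynomial in $x$. Indeed, the only surviving terms are multiples of $D$, $x^3D$, $x^6D$, $x^9D$; these correspond to $x$, $Y$, $Z$, $W$. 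Then $u\in k(x)\cap\A$ is a polynomial, so $729\mid t$, which is impossible.

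\emph{Main obstacle.} I expect Step 2, excluding nonlinear $\theta$, to be the hardest. There the available bounds ($\mu(\rho)\le1152$) are not sharp enough by direct comparison. If the pole-order bookkeeping fails, I would iterate the argument: treat $\theta$ as a new section of $\V$ of pole order at most $1036$, re-run Lemmas~\ref{lem:reduce972}--\ref{lem:normal} for it, and use minimality of $t$ among nonlinear sections. Since $t$ was chosen minimal, any nonlinear $\theta$ must have $\mu(\theta)\ge t$. Using the relation $\mu(\rho)=3(\mu(\theta)-729)$, I would check that this produces a smaller nonlinear pole order, giving a contradiction.
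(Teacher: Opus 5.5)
Your overall strategy matches the paper's: find an element of $\A$ that satisfies \eqref{eq:Ree-membership} but whose pole order is not divisible by $729$. However, Step 2 is a genuine gap, and the tools you propose for it cannot close it. The two constraints you want to compare are the same relation. If $\rho$ is nonconstant then $\mu(\rho)\ge729$, so $\mu(x^3\rho)=2187+\mu(\rho)>2430\ge\mu(\sigma)$. Then $\theta^3=\sigma-x^3\rho$ gives $3\mu(\theta)=2187+\mu(\rho)$, which is exactly your $\mu(\rho)=3(\mu(\theta)-729)$. Also, \eqref{eq:sigma-frob} is equivalent to the second identity of \eqref{eq:theta} once $\sigma=\theta^3+x^3\rho$ is substituted, so it adds nothing. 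Non-gap bookkeeping and minimality of $t$ do not help either. The values $t=891$, $\mu(\rho)=891$, $\mu(\theta)=1026$ satisfy every restriction you list: both pole orders are at least $t$, $\mu(\theta)\le1036$, and $3\mid t$. Your fallback of re-running Lemmas~\ref{lem:reduce972}--\ref{lem:normal} for $\theta$ does not apply as stated. Those lemmas use $t\le981$ essentially (for instance, $\mu(c)\le729$ requires $\mu(h)\le2268$), while $\mu(\theta)$ may be as large as $1036$.

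The missing idea is to feed $\rho$ back into $t$ through $r=\beta x+\rho^3$ and $h_2=r^3$. Suppose $p=\mu(\rho)>729$. Then $\rho$ is a nonlinear section, so $t\le p$, and $729+p/3=\mu(\theta)\le1036$ gives $p\le921$. Hence $\mu(h)=9(t-729)<2187$. Since $\mu(r)=3p>0$, $h_2$ is nonconstant, so \eqref{eq:L1458} forces $\mu(h)>1458$. Lemma~\ref{lem:defect}(4) then yields $3p=\mu(r)=9(\mu(h)-1458)=81(t-891)$. Thus $t=891+p/27>p$ for $p\le921$, contradicting $t\le p$. The case $p=729$ is excluded by applying \eqref{eq:Ree-membership} to $\theta$, whose pole order is $972$.

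Step 3 also needs repair. From $\theta'=\gamma$ you get $\rho^9=\beta^3x^3-\gamma$, not $-\gamma^3$; this forces $\beta=0$, since $x$ is not a cube. Moreover, ``determining $a$ up to cubes'' leaves an unknown cube, so $h$ is not yet pinned down. The paper instead shows $\T(h-c^3x^3)=0$ and, with $r$ constant, combines Lemma~\ref{lem:defect}(2),(4), \eqref{eq:L1458} and $\mu(h')\le3121$. This gives $h\in k[x]$ of degree at most $3$, after which $u^{27}-u=Dh^3$ has the form \eqref{eq:Ree-membership}.
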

\begin{proof}
Under the counterexample hypothesis, the preceding lemmas give
$891\le t=m_2(P)\le972$ and the normal form \eqref{eq:rb}--\eqref{eq:theta}.

Suppose first that $\rho$ is nonconstant, and write $p=\mu(\rho)$.
Then $729\le p\le1152$. Since
\[
 \mu(x^3\rho)=2187+p>2430\ge\mu(\sigma),
\]
there is no cancellation in $\theta^3=\sigma-x^3\rho$, and
\begin{equation}\label{eq:theta-pole}
 \mu(\theta)=729+p/3\in[972,1113].
\end{equation}
The gap interval of Lemma~\ref{lem:gap} forces $\mu(\theta)\le1036$;
hence $p\le921$.

If $p>729$, the definition of $t$ gives $t\le p\le921$.
Thus $\mu(h)=9(t-729)\le1728<2187$, so $\T(h)=c=0$.
Also $\mu(r)=3p$, since $r=\beta x+\rho^3$ and $3p>729$.
If $\mu(h)\le1458$, equation \eqref{eq:L1458} would give
$h\in\V+kx^2$, so $h_2=r^3$ would be constant, a contradiction.
Therefore $1458<\mu(h)<2187$, and Lemma~\ref{lem:defect}(4) gives
\[
 3p=\mu(r)=9(\mu(h)-1458)=81(t-891).
\]
It follows that $t=891+p/27$, and consequently
\[
 t-p=891-\frac{26p}{27}
 \ge891-\frac{26\cdot921}{27}=\frac{37}{9}>0,
\]
contrary to $t\le p$.

It remains in this case that $p=729$. Then
$\rho=\alpha x+\delta$ with $\alpha\in k^\times$, and
\eqref{eq:theta-pole} and \eqref{eq:theta} give
\[
 \mu(\theta)=972,\qquad
 \theta^{27}-\theta=D(\beta^3x^3-\alpha^9x^9-\delta^9).
\]
Equation \eqref{eq:Ree-membership} forces $\theta\in k(x)$, again
contradicting its pole order. Thus $\rho$ must be constant.

If $\beta\ne0$, the identity
$\sigma^3=b-\varepsilon x+\beta x^{10}$ has a unique dominant term
$\beta x^{10}$, because $\mu(b)\le3701<7290$.
It gives $\mu(\sigma)=2430$. Since $\rho$ is constant,
$\mu(x^3\rho)\le2187$, and $\theta^3=\sigma-x^3\rho$ gives
$\mu(\theta)=810$. This lies strictly between $m_1(P)=729$ and
$m_2(P)\ge891$, a contradiction. Therefore $\beta=0$ and $r$ is constant.

Put $h_0=h-c^3x^3$. For $c\in k$,
\[
 \T(c^3x^3)^3
 =\frac{c^{81}x^{81}-c^3x^3}{D^3}=c^3,
\]
because $c^{81}=c^3$ and $D^3=x^{81}-x^3$.
Thus $\T(c^3x^3)=c$ and $\T(h_0)=0$.
Lemma~\ref{lem:defect}(2) gives $\mu(h_0)\le2072$.
If $\mu(h_0)>1458$, its assertion (4), applied to
$(h_0)_2=h_2=r^3$, would make $r$ nonconstant. Hence
$\mu(h_0)\le1458$, and \eqref{eq:L1458} gives
\[
 h_0=\lambda x^2+v_0,\qquad \lambda\in k,\quad v_0\in\V.
\]
If $v_0$ is nonlinear, its pole order is at least $t\ge891$,
and \eqref{eq:small-sections} gives
\[
 \mu(v_0')\ge27(891-729)=4374.
\]
This pole cannot cancel with $-\lambda x$ in $h'=-\lambda x+v_0'$,
contrary to $\mu(h')\le3121$. Therefore
$v_0\in\Span_k\{1,x\}$ and $h\in k[x]$ has degree at most three.

Write $h=d_0+d_1x+d_2x^2+d_3x^3$ with $d_i\in k$. Then
\[
 u^{27}-u=Dh^3=D(d_0^3+d_1^3x^3+d_2^3x^6+d_3^3x^9).
\]
Equation \eqref{eq:Ree-membership} gives $u\in F\cap R=k(x)$,
contradicting $891\le\mu(u)=t\le972$. This completes the proof.
\end{proof}

\begin{proof}[Proof of Theorem~\ref{thm:main}]
Proposition~\ref{prop:lower-five} proves the theorem for $q_0\ge27$,
Proposition~\ref{prop:q9} proves it for $q_0=9$, and
Proposition~\ref{prop:q3} proves it for $q_0=3$.
\end{proof}

\section*{Acknowledgements}
The author was partially supported by CNPq, grant no.~302774/2025-4,
and FAPESP, grant no.~2024/00923-6.

\section*{Declaration on the use of generative AI}
ChatGPT (OpenAI, GPT-6) was used for language editing, mathematical
exploration, and assistance with the arguments for the small parameters
$q_0=3$ and $q_0=9$, including symbolic identities, pole-order estimates,
and the preparation of verification code. Claude (Anthropic, Claude Opus
5.5) was used to review the manuscript, to recheck its numerical
certificates, and for editing of the final text. The verification code is
available from the author upon request. The author is responsible for the
mathematical arguments, references, and final text.

\end{document}